\definecolor{qqqqff}{rgb}{0.,0.,1.}
\definecolor{cqcqcq}{rgb}{0.7529411764705882,0.7529411764705882,0.7529411764705882}
\definecolor{ttqqqq}{rgb}{0.2,0.,0.}
\definecolor{qqqqff}{rgb}{0.,0.,1.}
\definecolor{xdxdff}{rgb}{0.49019607843137253,0.49019607843137253,1.}
\definecolor{zzttqq}{rgb}{0.6,0.2,0.}
\definecolor{cqcqcq}{rgb}{0.7529411764705882,0.7529411764705882,0.7529411764705882}
\definecolor{yqyqyq}{rgb}{0.5019607843137255,0.5019607843137255,0.5019607843137255}
\definecolor{uuuuuu}{rgb}{0.26666666666666666,0.26666666666666666,0.26666666666666666}
\definecolor{xdxdff}{rgb}{0.49019607843137253,0.49019607843137253,1.}
\definecolor{qqqqff}{rgb}{0.,0.,1.}
\newcommand{\NN}{\mathbb{N}}
\newcommand{\ZZ}{\mathbb{Z}}
\newcommand{\RR}{\mathbb{R}}
\newcommand{\CC}{\mathbb{C}}
\newcommand{\TT}{\mathbb{T}}
\renewcommand{\P}{\mathcal{P}}
\renewcommand{\L}{\mathcal{L}}
\newcommand{\N}{\mathcal{N}}
\newcommand{\M}{\mathcal{M}}
\newcommand{\R}{\mathcal{R}}
\renewcommand{\S}{\mathcal{S}}
\renewcommand{\O}{\mathcal{O}}
\newcommand{\CCC}{\mathscr{C}}
\newcommand{\EEE}{\mathscr{E}}
\newcommand{\SSS}{\mathscr{S}}
\newcommand{\dzdw}{\frac{\dd z}{z}\wedge\frac{\dd w}{w}}
\newcommand{\dd}{ \mathrm{d} }
\newcommand{\im}{\mathfrak{Im}}
\newtheorem{theo}{Theorem}[section]
\newtheorem*{theom}{Theorem}
\newtheorem{prop}[theo]{Proposition}
\newtheorem{coro}[theo]{Corollary}
\theoremstyle{definition}
\newtheorem{defi}[theo]{Definition}
\theoremstyle{remark}
\newtheorem{remark}[theo]{Remark}
\newenvironment{rem}[1]{
    \begin{remark}#1}{
    \xqed{\blacklozenge}\end{remark}
}
\theoremstyle{remark}
\newtheorem{example}[theo]{Example}
\newenvironment{expl}[1]{
    \begin{example}#1}{
    \xqed{\lozenge}\end{example}
}
\newcommand{\xqed}[1]{
    \leavevmode\unskip\penalty9999 \hbox{}\nobreak\hfill
    \quad\hbox{\ensuremath{#1}}}
\keywords{Enumerative geometry, tropical refined invariants, abelian surfaces, floor diagrams\\ \textit{Data Statement:} I do not have any data to point.}
\begin{document}
 
 
\title{Tropical curves in abelian surfaces II:\\ enumeration of curves in linear systems}
\author{Thomas Blomme}

\begin{abstract}
In this paper, second installment in a series of three, we give a correspondence theorem to relate the count of genus $g$ curves in a fixed linear system in an abelian surface to a tropical count. To do this, we relate the linear system defined by a complex curve to certain integrals of $1$-forms over cycles in the curve. We then give an expression for the tropical multiplicity provided by the correspondence theorem, and prove the invariance for the associated refined multiplicity, thus introducing refined invariants of Block-G\"ottsche type in abelian surfaces.
\end{abstract}

\maketitle

\tableofcontents

\section{Introduction}

This paper is the second in a series of three papers which study enumerative invariants of abelian surfaces from the tropical point of view. While the first paper focuses on the enumeration of genus $g$ curves in a fixed class passing through $g$ points, this paper is dedicated to the enumeration of genus $g$ curves in a fixed linear system that are also subject to point conditions.

\subsection{Abelian surfaces and tropical tori}

\subsubsection{Complex abelian surfaces.} Abelian surfaces are complex tori, \textit{i.e.} the quotient $\CC A=\CC^2/\Lambda$ of the complex vector space $\CC^2$ by some rank $4$ lattice $\Lambda$, that is subject to some condition, namely the existence of a positive line bundle called polarization on $\CC A$. Not every complex torus can be endowed with the choice of a polarization: this imposes conditions on a matrix spanning the lattice, known as \textit{Riemann bilinear relations}. These are proved in \cite{griffiths2014principles}. Abelian surfaces have a natural structure of additive group inherited from the vector space structure of $\CC^2$. Through the use of the exponential map, it is also possible to give a multiplicative description of an abelian surface as a quotient of the algebraic complex torus $(\CC^*)^2$ by some rank $2$ lattice. This description is more adapted to define tropical counterparts to abelian surfaces, as it provides a logarithmic map on $\CC A=(\CC^*)^2/\Lambda$.

\subsubsection{Tropical abelian surfaces.} Tropicalizing the above definition, a tropical torus is obtained as a quotient of the vector space $\RR^2$ by some rank $2$ lattice. In the rest of the paper, the vector space is $N_\RR=N\otimes\RR$, where $N$ is some rank $2$ lattice, and the lattice by which we quotient is still denoted by $\Lambda$, with an inclusion $S:\Lambda\to N_\RR$. As in the complex case, a tropical torus is a tropical abelian surface if it can be endowed with the choice of a polarization, \textit{i.e.} a positive line bundle. We have a natural analog of the Riemann bilinear relation expressing the condition on the lattice $\Lambda$ so that $\TT A=N_\RR/\Lambda$.

\subsubsection{Tropicalizing a family of complex tori.} The two previous definitions relate as it is possible to consider specific families of abelian surfaces $N_{\CC^*}/\Lambda_t$, called Mumford families, that ``tropicalize" to a tropical abelian surface $\TT A=N_\RR/\Lambda$. It is then possible to apply correspondence techniques to resolve enumerative problems inside complex abelian surfaces only by studying tropical enumerative problems.

\subsubsection{Curves in abelian surfaces.} We consider a complex abelian surface $\CC A=N_{\CC^*}/\Lambda$ endowed with a polarization $\L$. It is possible to assume up to a change of basis that the pull-back of $\L$ to $N_{\CC^*}$. Sections of $\L$ can thus be viewed as certain specific holomorphic functions on $N_{\CC^*}$ satisfying a quasi-periodic relation. These are called $\theta$-functions, and their definition is recalled in section \ref{section theta function}. The zero locus of a $\theta$-function is a curve in the linear system $|\L|$.

\subsection{Enumerative geometry and Gromov-Witten invariants}

\subsubsection{Curves and enumerative problems.} Zero loci of sections of $\L$ provide curves in the abelian surface, and it natural to try to count curves of a fixed genus that are subject to some constraints. Furthermore, using the group structure of the abelian surface, it is possible to translate curves. However, the translate of a curve does not usually belong to the same linear system anymore. It is possible to show (see \cite{bryan1999generating}) that the dimension of the deformation space of a genus $g$ curve realizing a fixed homology class $C\in H_2(\CC A,\ZZ)$ is $g$. This leads to the following two enumerative problems:
\begin{itemize}[label=$\circ$]
\item How many genus $g$ curves in the class $C$ pass through $g$ points ?
\item How many genus $g$ curves in a fixed linear system pass through $g-2$ points ?
\end{itemize}
The first paper in the series focuses on the first enumerative problem, we now deal with the second.

\medskip

As it happens, the answer to both problems does not depend on the choice of the points, polarization up to translation nor the abelian surface. They are denoted by $\N_{g,C}$ and $\N_{g,C}^{FLS}$ respectively. These invariants were already studied in \cite{bryan1999generating}. They coincide with Gromov-Witten invariants of abelian surfaces, as defined in \cite{bryan1999generating}. Their definition differs a little from usual Gromov-Witten invariants because the choice of complex structure on an abelian surface is not generic, and a generic choice would lead to the surface having no curve at all. See \cite{bryan1999generating} for more details.

\medskip

More generally, Gromov-Witten invariants are obtained by integrating cohomology classes over some virtual fundamental class in the moduli space of curves inside a specific variety, here an abelian surface. The moduli space of curves with marked points is endowed with an evaluation map to the variety. Integrating pull-back of cohomology classes Poincar\'e dual to geometric constraints by the evaluation map amounts to count curves satisfying these geometric constraints. However, there are many other cohomology classes that it is possible to integrate, such as $\lambda$-classes, $\psi$-classes, \dots The invariants from \cite{bryan1999generating} were thus generalized in \cite{bryan2018curve} to a wider families of Gromov-Witten invariants. The cohomology classes of an algebraic complex variety might be Poincar\'e dual to algebraic cycles, and thus the computation of the corresponding Gromov-Witten invariants amounts to the solving of some algebraic problem. This is the case for some toric varieties. However, this is not always the case. For abelian surfaces, there are new classes that are not dual to algebraic cycles, for instance classes in $H^{0,1}(\CC A)$ and $H^{1,0}(\CC A)$, providing other Gromov-Witten invariants.

\medskip

\subsubsection{Parametric or implicit curves.} The point of view of Gromov-Witten theory is to consider parametrized complex curves in a fixed homology class $C\in H_2(\CC X,\ZZ)$, where $\CC X$ is a complex variety. Counting curves in a fixed linear system uses more the implicit point of view on curves: curves are sections of a fixed line bundle, \textit{i.e.} they are given with an equation. Both points of view are equivalent when the Picard group of $\CC X$ is discrete in the sense that the data of the homology class $C$ uniquely determines the line bundle $\L$ and the linear system $|\L|$. However, when $\CC X=\CC A$ is for instance an abelian surface, the Picard group is not discrete as it contains a part of dimension $h^{1,0}=2$. Imposing the linear system acts as a codimension $2$ condition on the space of curves. This corresponds to the fact that translates of a curve are not section of the same line bundle anymore. In \cite{bryan1999generating}, J. Bryan and N. Leung proved that it is possible to transform condition on the linear system into conditions on the curve. For instance, being part of a fixed linear system becomes meeting four $1$-dimensional cycles whose homology classes span $H_1(\CC A,\ZZ)$.

\medskip

In this paper, we adopt a parametric point of view, so that we also need to transform the linear system condition into a more manageable data. We adopt a point of view slightly different from \cite{bryan1999generating} by instead recovering the line bundle $\O(\CCC)$ associated to a complex curve $\CCC$ in terms of integrals of certain $1$-forms on the curve. More precisely, the statement is as follows. We use the following facts. We refer to section \ref{section curves to linear system} for more details.
	\begin{itemize}[label=-]
	\item The Picard group of the abelian surface $\CC A=N_{\CC^*}/\Lambda$ is isomorphic to $\Lambda_{\CC^*}^*/M$, where $M$ is the dual lattice of $N$.
	\item For a curve $\CCC\subset N_{\CC^*}$ and a circle $\gamma$ inside $\CCC$ realizing a  homology class $n\in N$, its moment is defined as $\exp\left(\mu_\gamma=\frac{1}{2i\pi}\int_\gamma \log z\frac{\dd w}{w}\right)$, where $z$ and $w$ are the coordinates on $N_{\CC^*}$ such that $\log z$ is well-defined on $\gamma$. (it is the monomial in $M=N^*$ Poincar\'e dual to $n$)
	\item An element $\lambda\in\Lambda$ gives a $3$-dimensional variety $X_\lambda$ with boundary inside $N_{\CC^*}$. It is obtained as the preimage of a path in $N_\RR$ by the logarithmic map $N_{\CC^*}\to N_\RR$.
	\end{itemize}

The statement is as follows.

\begin{theom}\ref{theorem relation moment linear system complex}
Given two curves $\CCC_0$ and $\CCC_1$ in $\CC A$ lifted to periodic curves inside $N_{\CC^*}$. The line bundle $\O(\CCC_1-\CCC_0)$ is represented by the element of $\Lambda_{\CC^*}$ that maps $\lambda\in\Lambda$ to the product of the moments of the circles resulting from the intersection $X_\lambda\cap(\CCC_0\cup\CCC_1)$.
\end{theom}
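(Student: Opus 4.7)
The plan is to recognize $[\O(\CCC_1-\CCC_0)] \in \Lambda_{\CC^*}^*/M$ as the quasi-periodicity cocycle of a suitable meromorphic function on $N_{\CC^*}$, and then identify that cocycle with the product of moments via a Stokes/residue argument on the 3-chain $X_\lambda$.

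First I would lift $\CCC_0, \CCC_1$ to $\Lambda$-periodic divisors $\tilde\CCC_0, \tilde\CCC_1$ in $N_{\CC^*}$ and, following Section~\ref{section theta function}, write $\tilde\CCC_i = \{\theta_i = 0\}$ for theta functions $\theta_i$. The ratio $f = \theta_1/\theta_0$ is a meromorphic function on $N_{\CC^*}$ with $\div f = \tilde\CCC_1 - \tilde\CCC_0$, and its $\Lambda$-transformation $f(\lambda z) = c_\lambda\, z^{m_\lambda}\, f(z)$ with $c_\lambda \in \CC^*$ and $m_\lambda \in M$ gives, after quotienting by the monomial subgroup $M \subset \Lambda_{\CC^*}^*$, exactly the class $[\O(\CCC_1-\CCC_0)]$. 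The theorem then reduces to proving the identity
\[
c_\lambda \;\equiv\; \prod_\gamma \exp(\mu_\gamma)^{\varepsilon_\gamma} \pmod{M},
\]
where $\gamma$ ranges over the components of $\tilde\CCC \cap X_\lambda$ with $\varepsilon_\gamma = +1$ if $\gamma \subset \tilde\CCC_1$ and $\varepsilon_\gamma = -1$ if $\gamma \subset \tilde\CCC_0$.

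To bridge the two sides, I would integrate $d\log f$ along a path $\Gamma \subset X_\lambda$ from a generic base point $z_0$ to $\lambda z_0$: on one hand, $\int_\Gamma d\log f = \log(c_\lambda z_0^{m_\lambda})$ up to $2\pi i\ZZ$, which extracts $\log c_\lambda$ modulo monomials; on the other hand, deforming $\Gamma$ inside $X_\lambda$ sweeps a 2-chain whose boundary crosses the circles $\gamma = \tilde\CCC \cap X_\lambda$, and each such crossing contributes the residue of $\log z \cdot dw/w$ along $\gamma$, namely $2\pi i\, \mu_\gamma$ (with the coordinates $z,w$ chosen as in the definition of the moment). Equivalently, this can be packaged as a Poincaré-Lelong identity for the 2-form $\log z \cdot \frac{dw}{w} \wedge \frac{df}{f}$: its residue along $\tilde\CCC$ reproduces the moment integrand, while its boundary contribution on $\partial X_\lambda = T_{p_0+S(\lambda)} - T_{p_0}$ reproduces $\log c_\lambda$ up to a monomial coming from the $z^{m_\lambda}$ factor. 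Summing with signs gives the claim.

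The main obstacle will be to manage the multi-valuedness of $\log z$ and of $\log f$ coherently across $X_\lambda$, so that the identity holds in $\CC^*$ and not merely modulo $2\pi i \ZZ$. The branch ambiguities introduced by cycles of $X_\lambda$ winding in the $z$-direction should assemble exactly into the character subgroup $\chi_M \subset \Lambda_{\CC^*}^*$ associated with monomials in $M$, matching the quotient in the Picard description; verifying this matching precisely — rather than up to a sign or a $2\pi i$ — is the technical heart of the argument, and I would handle it by either choosing branches adapted to the topology of $X_\lambda$, or by first working with the real-valued substitute $\log|f|^2$ and recovering the imaginary part at the end.
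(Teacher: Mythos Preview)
Your overall strategy --- take the quotient $\sigma=\theta_1/\theta_0$, recognise its quasi-periodicity cocycle $K_\lambda$ as the class in $\Lambda^*_{\CC^*}/M$, and extract $\log K_\lambda$ by a Stokes argument on the $3$-chain $X_\lambda$ --- is exactly what the paper does. But the specific mechanism you propose does not work. Integrating $d\log f$ along a $1$-path $\Gamma$ from $z_0$ to $\lambda z_0$ does give $\log K_\lambda$ up to $2\pi i\ZZ$, but deforming $\Gamma$ across one of the circles $\gamma_i$ changes that integral by $\pm 2\pi i$ (the residue of $\frac{df}{f}$), \emph{not} by $2\pi i\,\mu_{\gamma_i}$: the moment is an integral of $\log z\,\frac{dw}{w}$ over $\gamma_i$ and simply does not appear as a $1$-dimensional residue of $\frac{df}{f}$. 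Your alternative $2$-form $\log z\cdot\frac{dw}{w}\wedge\frac{df}{f}$ has the opposite defect: its residues along the tubular boundaries $\partial U(\gamma_i,\varepsilon)$ do reproduce the moments, but on the two big boundary tori $T_{x_0,y_0}$ and $\lambda T_{x_0,y_0}$ the difference of the integrals involves the coordinates of $\lambda$ in $N_{\CC^*}$ rather than $\log K_\lambda$, because $\frac{df}{f}$ is $\Lambda$-invariant while it is $\log z$ that shifts under translation by $\lambda$.

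The paper uses instead the $2$-form $\varphi=\frac{1}{(2i\pi)^2}\log\sigma\,\frac{dz}{z}\wedge\frac{dw}{w}$. Since $\frac{dz}{z}\wedge\frac{dw}{w}$ is $\Lambda$-invariant and integrates to $(2i\pi)^2$ on a fibre torus, the difference of the two boundary-torus integrals is exactly $\log K_\lambda$. The multivaluedness of $\log\sigma$ is handled not via $\log|\sigma|$ but by a cellular decomposition of $X_\varepsilon$ (the cobordism with tubular neighbourhoods of the $\gamma_i$ removed): one chooses a branch of $\log\sigma$ on each top cell and sums the Stokes identities. The branch jumps across interior $2$-faces are integer multiples $k_F$ of $\frac{dz}{z}\wedge\frac{dw}{w}$, and a \emph{second} application of Stokes --- using the primitive $\log\chi^{m}\frac{d\chi^{m'}}{\chi^{m'}}$, with $(m,m')$ adapted to the class $n_{\gamma_i}\in N$ --- collapses these face integrals onto edges. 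The coefficients $\sum_{F\supset E} k_F$ vanish on interior edges and equal the order $\pm 1$ of $\sigma$ on the edges lying along $\gamma_i$, which is precisely how the moments emerge. This two-step Stokes on a cellular decomposition, and the choice of form with $\log\sigma$ rather than $\log z$ as coefficient, are the ideas missing from your sketch.
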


This statement is a generalization of the $1$-dimensional statement expressing the line bundle associated to a degree $0$ divisor on an elliptic curve. See section \ref{section curves to linear system} for more details.

\subsubsection{Previous computations.} All the above invariants introduced above were computed in both \cite{bryan1999generating} and \cite{bryan2018curve} in case the class of curves $C$ is called primitive. This allows to prove some regularity statement. For instance, they show that certain generating series of these invariants are quasi-modular forms. To my knowledge, the computation for non-primitive classes remains open. This paper adresses the computation of $\N_{g,C}^{FLS}$ for non-primitive classes using the tropical method. Hopefully, generalizations of the statements from \cite{bryan2018curve} should be possible with this approach.

\medskip

We also mention the work of L. Halle and S. Rose \cite{halle2017tropical} that also count tropical curves in abelian varieties using a completely different method: they study maps between tropical tori and maps from a curve to its Jacobian. Their method leads to the computation of some of the above invariants but generalizes in higher dimension.

\subsection{Tropical geometry and correspondence theorems}

\subsubsection{Correspondence theorems in toric varieties.} Once defined, it remains to compute the invariants, in order to for instance study their regularity or their generating series. As the invariants do not depend on the choice of the constraints nor the choice of the abelian surface, it is now a classical method to try to compute them close to the \textit{tropical limit}. This approach was first implemented by G. Mikhalkin in \cite{mikhalkin2005enumerative} for computing enumerative invariants of toric surfaces. The result is known as \textit{correspondence theorems}. Since, other versions of correspondence theorems have been proved in various settings with different techniques, see for instance \cite{nishinou2006toric}, \cite{shustin2002patchworking}, \cite{shustin2004tropical}, \cite{tyomkin2017enumeration} or \cite{mandel2020descendant}.

\medskip

Close to the tropical limit, a complex curve break into several pieces whose structure is encoded in a tropical curve. Tropical curves are graphs whose edges have integer slope that satisfy the balancing condition at each of their vertices. Tropical curves were first defined as graphs in $\RR^2$, or $\RR^n$. See for instance \cite{brugalle2014bit}. It is possible to generalize their definition to any manifold whose tangent bundle contains a natural lattice. This is the case of tropical tori. Adhoc correspondence theorems can then relate the study of tropical curves inside such manifolds, called \textit{affine integer manifolds}, to enumerative or Gromov-Witten invariants of suitable complex manifolds.

\subsubsection{Correspondence in abelian surfaces.} Getting out of the toric situation, it is still possible to prove correspondence theorems by generalizing methods from the toric case. For instance, in \cite{blomme2021floor}, the author adapts methods from \cite{nishinou2006toric} to compute Gromov-Witten invariants of complex manifolds that are line bundles over an elliptic curve. However, this requires to consider families of complex varieties while correspondence for toric varieties could be seen as happening in the same variety. Concretely, in the situation of the paper, it means that we have to consider families of complex abelian surfaces $\CC A_t$ rather than a single abelian surface. 

\medskip

Concerning abelian surfaces, a correspondence theorem was first proved by T. Nishinou \cite{nishinou2020realization}. More precisely, the main result of \cite{nishinou2020realization} is twofold: it consists in a realization theorem that expresses the possible deformations of a given tropical curve, and the second that uses the description of the deformations to give an expression of the deformations that satisfy $g$ points constraints. This way, a correspondence theorem can be seen as a recipe to get a multiplicity $m_\Gamma^\CC$ out of a tropical curve, so that the count of tropical curves solution to a suitable enumerative problem with this multiplicity gives the desired invariant. In the first paper of the series, we proved a product expression for the complex multiplicity provided by Nishinou's correspondence theorem.

\medskip

\subsubsection{Correspondence for linear systems.} In the present paper, we adapt the proof of the correspondence theorem from \cite{nishinou2020realization} to work for the case of curves in a fixed linear system as well. It is possible to adapt the proof using the parametric point of view thanks to the expression of the linear system constraints provided by Theorem \ref{theorem relation moment linear system complex}.

\medskip

Let $\P_t$ be a $g-2$ point configuration in a family of abelian surfaces $\CC A_t$ that tropicalizes to $\P$ inside $\TT A$. See section \ref{section correspondence} for more details. We have the following correspondence theorem.

\begin{theom}\ref{theorem correspondence}
Let $h:\Gamma\to \TT A$ be a parametrized tropical curve passing through $\P$ and in a fixed linear system. The number of genus $g$ complex curves in the fixed linear system passing through $\P_t$ and that tropicalize to $h:\Gamma\to\TT A$ is
$$m_\Gamma^\CC=|\ker\Psi_{\CC^*}|\prod_{e\in E(\Gamma')}w_e.$$
In particular, we have that $N_{g,C}^{FLS}(\TT A,\P)$ does not depend on $\P$ and $\TT A$ as long as these choices are generic, and $N_{g,C}^{FLS}=\N_{g,C}^{FLS}$.
\end{theom}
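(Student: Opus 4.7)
The plan is to adapt the proof strategy of Nishinou's correspondence theorem \cite{nishinou2020realization} (and of the author's first paper in the series), replacing the pure point incidence conditions by a combination of $g-2$ point conditions and the linear system constraint, the latter being expressed through Theorem \ref{theorem relation moment linear system complex}. Starting from a tropical curve $h:\Gamma\to\TT A$ passing through $\P$ and in the fixed linear system, complex deformations are parametrized over the base of the Mumford family by gluing local toric models, one at each vertex of $\Gamma'$, with a gluing parameter attached to each bounded edge. The resulting deformation space is (formally) an analytic torus whose dimension matches the number of free parameters of $\Gamma$, and every complex lift of $h$ is obtained from a point of this space satisfying the imposed constraints.

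Next I would translate all constraints into linear equations on the logarithms of the gluing parameters. For the $g-2$ point incidence conditions this is the computation already carried out in the first paper of the series: it produces a sublattice of the integral deformation lattice whose index contributes the factor $\prod_{e\in E(\Gamma')}w_e$, as is standard in the tropical correspondence in the toric setting. The linear system condition is the new ingredient. By Theorem \ref{theorem relation moment linear system complex}, prescribing the line bundle $\O(\CCC)$ is equivalent to prescribing, for $\lambda$ running over a basis of $\Lambda$, the product of the moments of the circles cut out on $\CCC$ by the $3$-chain $X_\lambda$. Near the tropical limit these moments are dominated by the gluing parameters of the edges that $X_\lambda$ crosses, so that fixing them yields two additional linear equations on the logarithms of the gluing parameters. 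These two equations, taken modulo the previous ones, are precisely encoded by the map $\Psi_{\CC^*}$ appearing in the statement.

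The number of lifts is then the index of the combined lattice of equations inside the integral deformation lattice, which factors as $|\ker\Psi_{\CC^*}|\cdot\prod_{e\in E(\Gamma')}w_e$: the weight product comes from the point conditions and the kernel term from the fact that the two linear-system equations need not be primitive over $\ZZ$. The statement $N_{g,C}^{FLS}=\N_{g,C}^{FLS}$ and the independence on $\P$ and $\TT A$ then follow from the usual deformation-invariance of Gromov--Witten style counts on the complex side, transferred through the Mumford family: a generic tropical choice lifts to a generic complex choice, where the invariant is already known to be well-defined. The main obstacle is the transversality statement, namely showing that the combined system of point and moment equations is non-degenerate and that every complex lift is captured by this construction. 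I would handle it by expanding the $1$-form $\log z\,\tfrac{\dd w}{w}$ along the normalization of the limiting nodal curve, extracting the leading order in the family parameter $t$, and verifying that the induced linearization on the gluing parameters has exactly the cokernel predicted by $\Psi_{\CC^*}$; this is the same type of analytic estimate that powers Nishinou's original argument, now applied to the moment $1$-forms rather than to evaluation maps at marked points.
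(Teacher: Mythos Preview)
Your overall strategy --- adapt Nishinou's argument and express the linear system condition through moments via Theorem \ref{theorem relation moment linear system complex} --- is the right one, but you have misread where the two factors in $m_\Gamma^\CC$ come from, and this distorts the rest of the sketch. The map $\Psi$ does not encode only the linear system constraint: its codomain is $\bigoplus_e N/N_e \oplus \bigoplus_1^{g-2} N \oplus \Lambda^*$, so it packages the edge-gluing conditions, the $g-2$ point evaluations, \emph{and} the two moment conditions simultaneously. Hence $|\ker\Psi_{\CC^*}|$ is the number of prelog curves in the central fiber $\CC A_\infty$ satisfying \emph{all} constraints, not merely a measure of non-primitivity of the two linear-system equations. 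Conversely, $\prod_e w_e$ has nothing to do with point incidence: it counts, by Proposition 7.1 of \cite{nishinou2006toric}, the log-structures one can place on a fixed prelog curve. Your attribution ``the weight product comes from the point conditions and the kernel term from the two linear-system equations'' mislocates both pieces, and an argument built on that decomposition will not reproduce the stated formula.

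The paper's route is: (i) show the set of prelog curves solving the problem in $\CC A_\infty$ is a $\ker\Psi_{\CC^*}$-torsor; (ii) multiply by $\prod_e w_e$ for the choice of log-structure; (iii) lift each log-curve uniquely to all orders. Step (iii) is not an analytic estimate on the moment $1$-forms but a cohomological statement: Nishinou identifies $H^0(\CCC_\infty,\N)\simeq\ker F_\CC$, and rigidity of the tropical solution forces the evaluation-plus-moment map $\ker F_\CC\to\bigoplus_1^{g-2}N_\CC\oplus\Lambda^*_\CC$ to be an isomorphism, which makes the lift at each infinitesimal order exist and be unique. The role of Theorem \ref{theorem relation moment linear system complex} is to make the $\Lambda^*$ component of this map explicit at the prelog level, as a product of moments of the components $\CCC_Q$ meeting the chosen loops --- this is the point your sketch correctly anticipates, but it feeds into the count of prelog curves, not into a separate lattice index on top of the point conditions.
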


The map $\Psi$ involved in the theorem is defined in section \ref{section correspondence}. It plays a role similar to the evaluation map $\Theta$ from \cite{nishinou2020realization}. Notice that the theorem provides a new complex multiplicity with which to count tropical curves, and it may differ from the one introduced in \cite{nishinou2020realization} and used in the first paper of this series. Its expression using $\Psi$ is not that important since we give below a concrete expression for $m_\Gamma^\CC$. The number $N_{g,C}^{FLS}$ is the number of genus $g$ tropical curves in a fixed linear system passing through a generic configuration of $g-2$ points.

\medskip

In fact, we also have a product expression provided by the following theorem. Let $h:\Gamma\to\TT A$ be a tropical curve passing through a generic point configuration $\P$ of $g-2$ points and that belongs to a fixed linear system. The complement of $\P$ inside $\Gamma$ is connected and has genus $2$. It retracts onto a genus $2$ subgraph $\Sigma\subset\Gamma$. Let $\Lambda_\Gamma^\Sigma$ be the index of $H_1(\Sigma,\ZZ)$ inside $H_1(\TT A,\ZZ)\simeq\Lambda$. Let also $\delta_\Gamma$ be the gcd of the weights of the edges of the curve.

\begin{theom}\ref{theorem multiplicity formula}.
One has $m_\Gamma^\CC=\delta_\Gamma\Lambda_\Gamma^\Sigma m_\Gamma$, where $m_\Gamma=\prod_V m_V$ is the usual multiplicity, and $m_V$ is the usual vertex multiplicity.
\end{theom}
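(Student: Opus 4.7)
The plan is to compute $|\ker \Psi_{\CC^*}|$ by reducing to a $\ZZ$-linear determinant computation, and then factoring the matrix of $\Psi$ into a block arising from the $g-2$ point conditions and a $2 \times 2$ block arising from the linear system condition. Writing $\Psi_{\CC^*} = \Psi \otimes_\ZZ \CC^*$ for a $\ZZ$-linear map $\Psi$ of lattices of equal rank (these ranks match because the tropical curve is expected to deform rigidly once all constraints are imposed), one has $|\ker \Psi_{\CC^*}| = |\det \Psi|$, so the task is to evaluate this determinant.

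The key organizational step is to choose a spanning tree $T$ of $\Gamma$ adapted to the retraction $\Gamma \to \Sigma$. One selects $T$ so that the two cycles generating $H_1(\Sigma,\ZZ)$ appear among the distinguished non-tree cycles of $\Gamma$, while each of the remaining $g-2$ independent cycles contains (after a harmless rerouting) one of the marked points of $\P$. With this adapted basis the matrix of $\Psi$ becomes block upper-triangular: the first block of size $g-2$ encodes the point conditions on the point-carrying cycles, and the second $2 \times 2$ block encodes, via Theorem~\ref{theorem relation moment linear system complex}, the linear system condition on the two $\Sigma$-cycles.

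The evaluation block is controlled by the vertex-by-vertex argument already established in the first paper of the series: after absorbing the prefactor $\prod_{e}w_e$ from Theorem~\ref{theorem correspondence}, its contribution is exactly $m_\Gamma = \prod_V m_V$. For the $2 \times 2$ linear system block, Theorem~\ref{theorem relation moment linear system complex} identifies its entries with the monomials in $M$ Poincar\'e dual to the images in $\Lambda$ of the two $\Sigma$-cycles under $h_*$. The determinant of this block factors as $\delta_\Gamma \cdot \Lambda_\Gamma^\Sigma$: the index $\Lambda_\Gamma^\Sigma$ of $h_*(H_1(\Sigma,\ZZ))$ inside $\Lambda$ reflects how far the $\Sigma$-cycles are from spanning the lattice of periods, while the factor $\delta_\Gamma = \gcd_e w_e$ accounts for the global $\CC^*$-scaling ambiguity in the lift of the parametrization to $N_{\CC^*}$, which rescales all moment values by a common root of unity whose order divides each edge weight. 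Multiplying the two contributions yields the announced formula.

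The main obstacle is obtaining the clean block upper-triangular form: this requires checking that a spanning tree with the required compatibility with both $\P$ and $\Sigma$ exists generically, and that under this choice the off-diagonal block linking $\Sigma$-cycles to point conditions vanishes. The second delicate point is the precise identification of the $\CC^*$-torsion in $\ker \Psi_{\CC^*}$ with $\delta_\Gamma$ rather than with some more intricate combination of edge weights; this should follow from the fact that the moment map depends only on the direction of each cycle, so that the torsion is governed by the overall scaling of the parametrization. Once these two points are settled, the rest is a direct transcription of the first paper's local vertex computation combined with Theorem~\ref{theorem relation moment linear system complex}.
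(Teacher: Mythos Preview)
Your argument rests on the claim that $\Psi$ is a map between lattices of equal rank, so that $|\ker\Psi_{\CC^*}|=|\det\Psi|$. This is false: the domain of $\Psi$ has rank $6g-8$ while the codomain has rank $6g-7$ (the paper computes this explicitly in Section~\ref{section correspondence}). There is no single determinant to take. What the paper actually does is use the exact sequence $0\to\ZZ^{6g-8}\xrightarrow{\Psi}\ZZ^{6g-7}\to\ZZ\oplus G\to 0$ and identify $|\ker\Psi_{\CC^*}|=|\mathrm{Tor}(G,\CC^*)|$ with the gcd of the \emph{maximal minors} of $\Psi$. The nonzero minors are indexed by a choice of edge $e_0$ whose $N/N_{e_0}$-coordinate is discarded (these coordinates satisfy one Menelaus relation), and each such minor turns out to equal $\frac{w_{e_0}}{\prod_e w_e}\Lambda_\Gamma^\Sigma\prod_V m_V$. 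Taking the gcd over $e_0$ is precisely what produces the factor $\delta_\Gamma=\gcd_e w_e$.

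This means your account of where $\delta_\Gamma$ comes from is also off. It does not arise from the $2\times 2$ linear-system block, nor from a ``global $\CC^*$-scaling ambiguity'' of the parametrization; it is a gcd artefact of the non-square shape of $\Psi$. Relatedly, the $2\times 2$ block you describe does not have determinant $\delta_\Gamma\cdot\Lambda_\Gamma^\Sigma$. In the paper's computation one deforms the representing loops for $\lambda_1,\lambda_2$ so that all their intersections with $\Sigma$ occur on edges adjacent to a single trivalent vertex $V$ of $\Sigma$; the resulting $2\times 2$ block factors as the product of the intersection matrix $(\gamma_i\cdot\lambda_j)$ with a matrix built from the slopes at $V$, giving determinant $\Lambda_\Gamma^\Sigma\cdot m_V$. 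The remaining vertex multiplicities come from pruning the graph after $V$ has been opened, exactly as in the $g$-point case. Your spanning-tree block-triangular strategy may well be workable, but it cannot yield a square matrix and will not locate $\delta_\Gamma$ where you claim.
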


The new complex multiplicity possesses a new term $\Lambda_\Gamma^\Sigma$. Tropically, its presence can be explained by the appearance of new kind of walls. These new walls are as follows. Usually, when moving the constraints, one of the following events, called \textit{wall}, can happen:
\begin{itemize}[label=$\circ$]
\item An edge is contracted leading to the appearance of a quadrivalent vertex. In this case, two solutions may become a unique solution on the other side of the wall.
\item A cycle is contracted to a segment, leading to a pair of quadrivalent vertices linked by a pair of parallel edges. In this case, one curve is replaced by another.
\item Last, one of the marked points meets a vertex of the curve.
\end{itemize}
For the latter kind of wall, in the usual case of toric surfaces, the fact that the complement of the marked point in the curve is a forest allows one to prove that the marked point can only go from on edge adjacent to the vertex to another, so that there are only two out of the three adjacent combinatorial types that can provide a solution. Here, if a marked point merges with a vertex of $\Sigma$, the marked point may move onto any of the adjacent edges, and we can have two solutions becoming one on the other side of the wall. The local invariance is ensured by the new term $\Lambda_\Gamma^\Sigma$.



\subsection{Refined invariants for curves in linear systems}

The usual complex multiplicity provided by the correspondence theorem from \cite{mikhalkin2005enumerative} expresses as a product over the vertices of the tropical curve. In \cite{block2016refined}, F. Block and L. G\"ottsche proposed to refine this multiplicity into a Laurent polynomial one by replacing the vertex multiplicities by their quantum analog. In the case of toric surfaces, I. Itenberg and G. Mikhalkin proved in \cite{itenberg2013block} that the count of tropical curves with fixed genus and degree passing through the right number of points with refined multiplicity is invariant. These results were generalized in various settings, see for instance \cite{schroeter2018refined}, \cite{blechman2019refined}, \cite{gottsche2019refined}, \cite{blomme2021floor}, \cite{blomme2021refinedtrop}.

\medskip

In our case, despite the appearance of the new term $\Lambda_\Gamma^\Sigma$, the complex multiplicity provided by Theorem \ref{theorem correspondence} and Theorem \ref{theorem multiplicity formula} still expresses as a product over the vertices. Thus, we can define the refined multiplicity $m_\Gamma^q=\prod_V \frac{q^{m_V/2}-q^{-m_V/2}}{q^{1/2}-q^{-1/2}}$. We then introduce the following enumerative counts:
\begin{align*}
N_{g,C,k}^{FLS}(\TT A,\P) & = \sum_{\substack{h(\Gamma)\supset\P \\ \delta(\Gamma)=k}} \Lambda_\Gamma^\Sigma m_\Gamma \in\NN , \\
BG_{g,C,k}^{FLS}(\TT A,\P) & = \sum_{\substack{h(\Gamma)\supset\P \\ \delta(\Gamma)=k}} \Lambda_\Gamma^\Sigma  m^q_\Gamma \in\ZZ[q^{\pm 1/2}] .\\
\end{align*}
counting curves with a fixed gcd, and the following counts for curves without a gcd condition:
\begin{align*}
M_{g,C}^{FLS}(\TT A,\P) &  = \sum_{h(\Gamma)\supset\P} \Lambda_\Gamma^\Sigma  m_\Gamma = \sum_{k|\delta(C)} N_{g,C,k}^{FLS}(\TT A,\P) \in\NN , \\
N_{g,C}^{FLS}(\TT A,\P) &  = \sum_{h(\Gamma)\supset\P} \delta_\Gamma \Lambda_\Gamma^\Sigma  m_\Gamma = \sum_{k|\delta(C)}k N_{g,C,k}^{FLS}(\TT A,\P) \in\NN , \\
R_{g,C}^{FLS}(\TT A,\P) & = \sum_{h(\Gamma)\supset\P} \delta_\Gamma \Lambda_\Gamma^\Sigma  m^q_\Gamma = \sum_{k|\delta(C)}k BG_{g,C,k}^{FLS}(\TT A,\P) \in \ZZ[q^{\pm 1/2}] , \\
BG_{g,C}^{FLS}(\TT A,\P) & = \sum_{h(\Gamma)\supset\P} \Lambda_\Gamma^\Sigma  m^q_\Gamma = \sum_{k|\delta(C)} BG_{g,C,k}^{FLS}(\TT A,\P) \in \ZZ[q^{\pm 1/2}] . \\
\end{align*}
We already know that the count $N_{g,C}^{FLS}(\TT A,\P)$ does not depend on the choice of $\P$ as long as it is generic, nor the choice of $\TT A$ as long as it is generic. This invariance is provided by the correspondence theorem. The invariance for other counts cannot be deduced from any complex invariants, and has thus to be studied on its own, looking at the \textit{walls} depicted above.

\begin{theom}\ref{theorem point invariance}
The refined count $BG_{g,C,k}^{FLS}(\TT A,\P)$ of genus $g$ curves in the class $C$ with fixed gcd passing through $\P$ in the fixed linear system (and thus all the others) does not depend on the choice of $\P$ and the line bundle as long as it is generic.
\end{theom}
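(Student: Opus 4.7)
The plan is to prove the refined invariance by a wall-crossing argument, in the spirit of \cite{itenberg2013block}, with the new factor $\Lambda_\Gamma^\Sigma$ absorbing the contributions of the new walls specific to the linear system setting. Given two generic data $(\TT A_0,\P_0,\L_0)$ and $(\TT A_1,\P_1,\L_1)$, I would connect them by a generic path in the moduli of admissible data, along which each contributing parametrized tropical curve varies in a family of fixed combinatorial type except on a discrete set of parameter values -- the \emph{walls}. The three possible wall types are exactly those listed just before the statement. It is therefore enough to verify that $BG_{g,C,k}^{FLS}$ takes the same value on the two sides of each wall.

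For the walls of the first two types (contraction of a bounded edge to a $4$-valent vertex, or of a cycle to a pair of parallel edges), the local combinatorial modification of $\Gamma$ lives inside a bounded region that can be arranged not to meet a chosen genus $2$ subgraph $\Sigma$. Consequently $\delta_\Gamma$ and $\Lambda_\Gamma^\Sigma$ take the same value on all combinatorial types involved and factor out of the wall-crossing identity. The remaining identity is then the standard refined balancing relation for products of quantum brackets $\q{m_V}$ at a $4$-valent vertex, as proved in \cite{itenberg2013block} and exploited in the same form in \cite{blomme2021floor}.

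The wall of the third type, where a marked point $p\in\P$ meets a vertex $V$ of $\Gamma$, is the new case and is the one for which the factor $\Lambda_\Gamma^\Sigma$ is designed. When $V$ lies in the forest part of $\Gamma\setminus\P$, the classical $2$-out-of-$3$ argument from the toric case applies and $\Lambda_\Gamma^\Sigma$ is unchanged. When $V$ is a vertex of $\Sigma$, any of the three edges adjacent to $V$ may, after the wall, carry $p$, producing in principle three admissible combinatorial types $\Gamma_i$ with associated genus $2$ subgraphs $\Sigma_i$. I would show that the signed sum
\[ \sum_i \varepsilon_i\, \Lambda_\Gamma^{\Sigma_i}\, m_{\Gamma_i}^q = 0 \]
holds locally around $V$, where $\varepsilon_i\in\{\pm 1\}$ records on which side of the wall $\Gamma_i$ lies. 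Writing $u_1,u_2,u_3$ for the primitive slopes of the edges at $V$ with weights $w_1,w_2,w_3$ subject to $w_1u_1+w_2u_2+w_3u_3=0$, moving $p$ from $e_i$ to $e_j$ replaces a cycle of $\Sigma$ through $e_i$ by one through $e_j$, and the ratio $\Lambda_\Gamma^{\Sigma_j}/\Lambda_\Gamma^{\Sigma_i}$ is controlled by the wedge determinants $|u_i\wedge u_j|$ -- precisely the numbers appearing in the vertex multiplicity $m_V$ -- producing the desired cancellation.

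The hard part is this last identity: it requires a careful bookkeeping of the image $H_1(\Sigma_i,\ZZ)\hookrightarrow\Lambda$ and of the elementary change of basis caused by sliding $p$ across $V$. The classical limit $q\to 1$ of the identity is automatic, being equivalent to the invariance of $N_{g,C}^{FLS}(\TT A,\P)$ provided by Theorem \ref{theorem correspondence}. The refinement then lifts vertex by vertex, since by Theorem \ref{theorem multiplicity formula} the refined multiplicity $m_\Gamma^q$ remains a product over the vertices of $\Gamma$, so the wall-crossing reduces to the $q$-deformed version of an identity one already knows at $q=1$, obtained by substituting $m_V\mapsto \q{m_V}/\qd$ and invoking the refined balancing relation at the $4$-valent configurations that appear as one slides $p$ through $V$.
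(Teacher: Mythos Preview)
Your overall wall-crossing scaffolding is right, and your treatment of the first two wall types is essentially the paper's. The third wall, however, is where your argument goes wrong in a substantive way.

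When a marked point $p$ meets a trivalent vertex $V\in\Sigma$, no $4$-valent configuration appears: in each of the three adjacent combinatorial types the vertex $V$ is still trivalent and the marked point sits as a bivalent vertex on one of the three incident edges. In particular the set of trivalent vertices of $\Gamma$, together with their vertex multiplicities $m_W$, is \emph{identical} across the three types. Hence $m_\Gamma^q$ (and $\delta_\Gamma$) is the same constant on all three sides of the wall and factors out completely. There is therefore no ``refined balancing relation'' to invoke, and no role for the determinants $|u_i\wedge u_j|$ at $V$; your proposed mechanism, in which the ratio $\Lambda_\Gamma^{\Sigma_j}/\Lambda_\Gamma^{\Sigma_i}$ is governed by these wedge products, is not what is happening.

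What actually needs to be proved is a purely lattice-theoretic additive identity among the three indices:
\[
\Lambda_\Gamma^{\Sigma_1}+\Lambda_\Gamma^{\Sigma_2}=\Lambda_\Gamma^{\Sigma_3}
\]
(up to relabeling so that $\Sigma_3$ lies on one side of the wall and $\Sigma_1,\Sigma_2$ on the other). The paper obtains this by looking at the genus $3$ graph $\Sigma_0$ containing $V$ and all three edges, introducing the cocycles $\varphi_i\in H^1(\Sigma_0,\ZZ)$ measuring the flow through the $i$-th edge so that $H_1(\Sigma_i,\ZZ)=\ker\varphi_i$, and using the balancing relation $\varphi_1+\varphi_2+\varphi_3=0$ to deduce a linear relation among the Pl\"ucker bivectors $\alpha_i\wedge\beta_i$ of the three rank $2$ sublattices. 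Pushing forward to $H_1(\TT A,\ZZ)\simeq\Lambda$ and taking determinants gives the additive relation above. This is the missing idea in your proposal.

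A minor point: the statement is about varying $\P$ and the line bundle for a \emph{fixed} $\TT A$; varying $\TT A$ is the separate Theorem \ref{theorem surface invariance}, so your path should not move $\TT A$.
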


We remove $\P$ from the notation to denote the associated invariant. Then, we have an invariance statement regarding the choice of the abelian surface $\TT A$.

\begin{theom}\ref{theorem surface invariance}
The refined invariant $BG_{g,C,k}^{FLS}(\TT A)$ (and thus all the others) does not depend on the choice of $\TT A$ as long as it is chosen generically among the surfaces that contain curves in the class $C$.
\end{theom}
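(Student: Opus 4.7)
The plan is to run a wall-crossing argument in the moduli space of polarized tropical abelian surfaces that realize the class $C$, following the philosophy of Theorem \ref{theorem point invariance}. Any two generic surfaces $\TT A_0$ and $\TT A_1$ would be connected by a path $(\TT A_t)_{t\in[0,1]}$ in this moduli space, along which I simultaneously carry a continuously varying point configuration $\P_t$ that remains generic for each $t$ outside a finite set of parameters. The refined count $BG_{g,C,k}^{FLS}(\TT A_t,\P_t)$ is locally constant in $t$ between walls by Theorem \ref{theorem point invariance}, and I would verify that it is also preserved across each wall.

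Concretely, I would first check that the moduli space of tropical abelian surfaces containing a curve in class $C$ with a specified polarization type is a connected open subset of the space of rank-$2$ lattices $\Lambda \subset N_\RR$ satisfying the tropical Riemann bilinear relations. On each connected component, two generic points can be joined by a smooth path transverse to all codimension-$1$ strata.

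Next, along such a path, the finite set of solutions $h:\Gamma\to\TT A_t$ to the enumerative problem deforms smoothly for generic $t$, and each refined contribution $\Lambda_\Gamma^\Sigma\, m_\Gamma^q$ depends only on the combinatorial type of $\Gamma$, hence is locally constant. Walls in $t$ correspond to combinatorial degenerations of one of the solutions, and they fall into the same classification as in Theorem \ref{theorem point invariance}: contraction of an internal edge into a $4$-valent vertex, contraction of a cycle to a pair of parallel edges, or coincidence of a point of $\P_t$ with a vertex of $\Gamma$. At each such wall, the local balancing formula for the refined multiplicities is identical to the one established for point deformations, since the local model of the degeneration is insensitive to whether the deformation parameter comes from moving $\P$ or from moving $\Lambda$; the sum of the refined contributions on either side of the wall therefore agrees.

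The main obstacle will be ruling out additional wall types specific to lattice deformations, for instance where the periodicity of a cover of $\Gamma$ changes or where $\Lambda_\Gamma^\Sigma$ jumps discontinuously without a corresponding combinatorial change in $\Gamma$. A dimension count in the moduli space of pairs (surface, point configuration), combined with transversality of the chosen path, should ensure that no such exotic walls appear generically. Since $\Lambda_\Gamma^\Sigma$ is determined by the combinatorial type of $\Gamma$ and the lattice data of $\TT A_t$, and since both vary continuously along the path, it remains constant between walls; its jumps at walls are accounted for by the same local cancellations as in the proof of Theorem \ref{theorem point invariance}.
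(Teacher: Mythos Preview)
Your path-and-wall-crossing strategy is the right framework and matches the paper's approach. The gap is in your classification of walls: you assert that they fall into the same three types as in Theorem \ref{theorem point invariance}, but the surface deformation introduces a genuinely new wall that you do not address.

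The path $(\TT A_t)$ must in general pass through \emph{non-generic} surfaces $\TT A_{t_*}$, namely those for which some proper decomposition $C=C_1+C_2$ has both $C_1$ and $C_2$ realizable by tropical curves. At such a $t_*$ the enumerative problem acquires \emph{reducible} solutions---a curve in class $C_1$ together with a curve in class $C_2$ jointly meeting the constraints---and these are not captured by any of your three wall types. The paper handles this by observing that, for each choice of a subset of intersection points between the two components to smooth, there are two ways to deform the reducible curve into an irreducible one, one landing on each side of the wall, and both carry the same refined multiplicity; this is what yields invariance across such a wall. Your final paragraph tries to rule out ``exotic'' walls by a dimension count, but this wall cannot be ruled out: the non-generic surfaces form a codimension-one locus in the moduli of surfaces realizing $C$, and a path between two generic surfaces will typically meet it.

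On a secondary point, the paper in fact arranges $\P_t$ to be generic in $\TT A_t$ for \emph{every} $t$ (possible since non-generic configurations are nowhere dense in each fibre), so that every irreducible solution is simple and deforms to nearby simple solutions without combinatorial change. With this choice the three wall types you list simply do not occur along the path; the entire content of the surface-invariance proof is the reducibility wall above. The paper also invokes the cutting process from the first installment to recast curves in $\TT A_t$ as curves in $N_\RR$ with moment conditions on their unbounded ends (encoding both the gluing and, via Theorem \ref{theorem relation moment linear system tropical}, the linear system), which makes the local analysis at the reducibility wall transparent. Your proposal does not mention either ingredient, so as written it does not close.
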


Interpretations of refined invariants in toric varieties have already been proved: they correspond both to refined signed count of real curves according to the value of some quantum index, see \cite{mikhalkin2017quantum}, \cite{blomme2021refinedreal}, or to generating series of Gromov-Witten invariants with insertions of $\lambda$-classes, as proven by P. Bousseau in \cite{bousseau2019tropical}. It is also conjectured that they correspond to the refinement of the Euler characteristic of some relative Hilbert scheme by the Hirzebruch genus, see \cite{gottsche2014refined}. Although it should be possible to adapt some methods of the above citations, the meaning of the refined invariants in abelian surfaces remains open. Computations from \cite{bryan2018curve} and results from \cite{bousseau2019tropical} suggest that refined invariants enable the computation of Gromov-Witten invariants with insertions of $\lambda$-classes in this situation as well.

\medskip

The refinement of the complex multiplicity by the refined multiplicity proposed in \cite{block2016refined} adapts in our situation due to the appearance of the product of vertex multiplicities. The term $\delta_\Gamma$ could be replaced by anything else since we have invariance for the count of tropical curves with a fixed gcd. It would be interesting to find a way to refine the new term $\Lambda_\Gamma^\Sigma$.

\subsection{Plan of the paper}

The paper is organized as follows. The second section deals with line bundles on abelian surfaces, their sections called $\theta$-functions, and how to relate the parametric and implicit points of view for curves both in the complex and tropical case. The third section studies the enumerative problem considered in this paper. It states the correspondence theorem, and results concerning tropical multiplicities and the invariants. The fourth section is devoted to the proof of the results stated in the third. The last section provides small examples. We refer to the last paper for more examples of computations using the pearl diagram algorithm.

\textit{Acknowledgments.} Research is supported in part by the SNSF grant 204125.

\section{Line bundles and curves in linear systems}

In the following section, we consider a tropical torus $\TT A=N_\RR/\Lambda$, where $N$ and $\Lambda$ are two lattices of rank $2$, and $N_\RR$ is the real vector space $N\otimes\RR$. The matrix of the inclusion $\Lambda\hookrightarrow N_\RR$ is denoted by $S$.

\subsection{Line bundles on abelian surfaces}

Line bundles on tropical tori are defined analogously to the classical case. See \cite{griffiths2014principles} for the classical setting, and \cite{halle2017tropical} or \cite{mikhalkin2008tropical} for a more complete reference in the tropical setting. In the classical case, the group of isomorphism classes of line bundles on a variety $Y$ is the cohomology group $\mathrm{Pic}\ Y= H^1(Y,\O^\times_Y)$. In the tropical world, the sheaf of affine functions with integer slope $\mathrm{Aff}_\ZZ$ plays that role. Locally, an affine function is of the form $x\in U\subset N_\RR\mapsto \langle m,x\rangle +l\in\RR$.

\begin{defi}
A isomorphism class of line bundles on $\TT A$ is an element of the \textit{Picard group} $H^1(\TT A,\mathrm{Aff}_\ZZ)$, where $\mathrm{Aff}_\ZZ$ is the sheaf of affine functions with integer slope, \textit{i.e.} slope in $M$.
\end{defi}

In tropical geometry, the following exact sequence plays the role of the exponential sequence:
$$0\rightarrow\RR\rightarrow\mathrm{Aff}_\ZZ\rightarrow M\rightarrow 0.$$
The second arrow maps a function to its slope $m\in M$, which is a section of the cotangent bundle. We then have the long exact sequence associated to the short exact sequence:
$$\cdots \to H^0(\TT A,M) \to H^1(\TT A,\RR) \to H^1(\TT A,\mathrm{Aff}_\ZZ) \xrightarrow{c_1} H^1(\TT A,M) \to H^2(\TT A,\RR)\to\cdots.$$
Morever, we have the following isomorphisms:
\begin{itemize}[label=-]
\item The group $H^0(\TT A,M)$ of global $1$-forms with integer slope is just $M$,
\item As $H_1(\TT A,\ZZ)\simeq \Lambda$, we have $H^1(\TT A,\RR)\simeq \Lambda^*_\RR$,
\item We similarly have $H^1(\TT A,M)\simeq \Lambda^*\otimes M=\hom(\Lambda,M)$.
\end{itemize}

On the left side of the sequence, the cobordism map $M\to \Lambda^*_\RR$ is just the dual map to the inclusion $S:\Lambda\to N_\RR$. By assumption, it is an injection. Thus, we have an injection of the dual torus into the Picard group:
$$\TT A^\vee=H^1(\TT A,\RR)/H^0(\TT A,M)=\Lambda^*_\RR/M\hookrightarrow H^1(\TT A,\mathrm{Aff}_\ZZ).$$

Meanwhile, on the right side of the sequence, the map $c_1:H^1(\TT A,\mathrm{Aff}_\ZZ)\to H^1(\TT A,M)\simeq \hom(\Lambda,M)$ is called the Chern class map. Let $\L$ be a line bundle and $c=c_1(\L)$ denotes its Chern class. By definition, we have $c:\Lambda\to M$. Then, using the inclusion $S:\Lambda\hookrightarrow N_\RR$, we set the following bilinear form:
$$B(\lambda_1,\lambda_2)=c(\lambda_1)(S\lambda_2),$$
that we also write $c(\lambda_1)(\lambda_2)$ or $\langle c(\lambda_1),\lambda_2\rangle$ if the inclusion $S:\Lambda\to N_\RR$ is implicit. We have the following proposition, already noticed in \cite{mikhalkin2008tropical}.

\begin{prop}
For any $\L\in H^1(\TT A,\mathrm{Aff}_\ZZ)$, the bilinear form $B$ induced by its Chern class $c:\Lambda\to M$ is symmetric.
\end{prop}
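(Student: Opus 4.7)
The plan is to represent the class of $\L$ by an explicit $1$-cocycle on $\Lambda$ (viewing $\TT A = N_\RR/\Lambda$ as a quotient of the simply-connected space $N_\RR$), and read off both the Chern class and the symmetry of $B$ from the cocycle identity. Concretely, I would lift $\L$ to the universal cover and describe it by a family of transition affine functions $\phi_\lambda \in \mathrm{Aff}_\ZZ(N_\RR)$, one for each $\lambda \in \Lambda$. Each $\phi_\lambda$ has the form
$$\phi_\lambda(x) = \langle c(\lambda), x\rangle + l_\lambda,\qquad c(\lambda) \in M,\ l_\lambda\in\RR,$$
and under the identification $H^1(\TT A,\mathrm{Aff}_\ZZ)\simeq H^1(\Lambda,\mathrm{Aff}_\ZZ(N_\RR))$, the connecting map $c_1$ sends the class $[\L]$ to the slope map $\lambda\mapsto c(\lambda)$. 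Thus the bilinear form to be studied is $B(\lambda_1,\lambda_2)=\langle c(\lambda_1),S\lambda_2\rangle$ read directly off the linear parts of the $\phi_\lambda$.

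The next step is to write out the cocycle relation. Since $\Lambda$ acts on $N_\RR$ by translation, the $1$-cocycle condition takes the form
$$\phi_{\lambda_1+\lambda_2}(x)=\phi_{\lambda_1}(x+S\lambda_2)+\phi_{\lambda_2}(x)$$
(up to the sign convention chosen in the identification with group cohomology). Expanding both sides using $\phi_\lambda(x)=\langle c(\lambda),x\rangle+l_\lambda$ and comparing coefficients of $x$ first shows that $c:\Lambda\to M$ is a group homomorphism, which is the content of it being a cohomology class in $\hom(\Lambda,M)$. Comparing the constant terms yields the key identity
$$l_{\lambda_1+\lambda_2}=l_{\lambda_1}+l_{\lambda_2}+\langle c(\lambda_1),S\lambda_2\rangle = l_{\lambda_1}+l_{\lambda_2}+B(\lambda_1,\lambda_2).$$

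The symmetry then drops out of the commutativity of addition in $\Lambda$: applying the same identity to $\lambda_2+\lambda_1$ gives
$$l_{\lambda_1+\lambda_2}=l_{\lambda_2+\lambda_1}=l_{\lambda_2}+l_{\lambda_1}+B(\lambda_2,\lambda_1),$$
so comparing the two expressions forces $B(\lambda_1,\lambda_2)=B(\lambda_2,\lambda_1)$.

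The only real obstacle is bookkeeping: one must make sure the cocycle convention is the one induced by the long exact sequence displayed in the text (so that the slope of $\phi_\lambda$ really is the image of $[\L]$ under $c_1$), and that the ambient action of $\Lambda$ on $\mathrm{Aff}_\ZZ(N_\RR)$ is the one used in identifying $H^1(\TT A,\mathrm{Aff}_\ZZ)$ with group cohomology. Once this is pinned down, the argument is the one-line consequence above. Alternatively, one could phrase the same computation cohomologically via a cup product $H^1(\Lambda,\RR)\otimes H^1(\Lambda,M)\to H^2(\Lambda,M)$ and use that $H^2(\Lambda,\ZZ)\simeq\ZZ$ detects symmetric parts of bilinear forms on $\Lambda$, but the direct cocycle computation above is both shorter and more transparent.
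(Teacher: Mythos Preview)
Your argument is correct. It is, however, a different route from the paper's one-line proof. The paper argues purely at the level of the long exact sequence: since $c=c_1(\L)$ lies in the image of $c_1$, it is in the kernel of the boundary map $H^1(\TT A,M)\to H^2(\TT A,\RR)\simeq\Lambda^2\Lambda^*_\RR$, and that map is precisely the skew-symmetrization $c\mapsto B-B^T$; hence $B$ is symmetric. Your approach instead unwinds the identification $H^1(\TT A,\mathrm{Aff}_\ZZ)\simeq H^1(\Lambda,\mathrm{Aff}_\ZZ(N_\RR))$ and reads the symmetry off the cocycle relation for the constant terms $l_\lambda$. This is more hands-on and makes the role of the commutativity of $\Lambda$ explicit. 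In fact the paper gives essentially your computation a few lines later (with $\beta_\lambda$ in place of your $l_\lambda$) as ``another way to see that $c$ defines a symmetric form,'' so the two arguments are complementary rather than competing: yours makes the mechanism visible, the paper's locates it cleanly in the exact sequence.
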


\begin{proof}
The map to $H^2(\TT A,\RR)=\Lambda^2 H^1(\TT A,\RR)$ corresponds to the skew-symmetrization. Hence the symmetry. 
\end{proof}

These considerations allow us to find a decomposition of the Picard group $H^1(\TT A,\mathrm{Aff}_\ZZ)$: it contains the dual torus $\TT A^\vee$, and surjects to a discrete lattice: the elements of $\hom(\Lambda,M)$ that induce a symmetric pairing on $\Lambda_\RR$. Let us describe this lattice in coordinates. Choosing basis of $\Lambda$ and $M$, we have an identification of $\hom(\Lambda,M)$ with $\M_2(\ZZ)$, and let $S$ be the matrix of the inclusion $\Lambda\hookrightarrow N_\RR$.

\begin{prop}
A matrix $c$ belongs to $\mathrm{Im}\ c_1$ if and only if $S^T c$ is symmetric: $S^T c\in\S_2(\RR)$.
\end{prop}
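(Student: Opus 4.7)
The plan is to exploit exactness of the long cohomology sequence displayed just above the statement: by exactness we have $\mathrm{Im}\ c_1 = \ker\bigl(H^1(\TT A, M) \to H^2(\TT A, \RR)\bigr)$, so the whole proposition reduces to an explicit matrix description of this connecting map. The forward implication is essentially already contained in the previous proposition (symmetry of $B$, equivalently of $S^T c$, for every $c \in \mathrm{Im}\ c_1$); the substance of the statement is the converse.

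First I would transport everything into coordinates using the identifications already recorded above: $H^1(\TT A, M) \simeq \hom(\Lambda, M) \simeq \M_2(\ZZ)$, $H^1(\TT A, \RR) \simeq \Lambda^*_\RR$, and hence $H^2(\TT A, \RR) \simeq \Lambda^2 H^1(\TT A, \RR) \simeq \Lambda^2 \Lambda^*_\RR \simeq \RR$. Then I would show that under these identifications the connecting map $\delta : \hom(\Lambda, M) \to \Lambda^2 \Lambda^*_\RR$ factors as post-composition with the dual inclusion $S^T : M \hookrightarrow \Lambda^*_\RR$ (the transpose of $S : \Lambda \hookrightarrow N_\RR$), followed by the anti-symmetrization map $\hom(\Lambda, \Lambda^*_\RR) \to \Lambda^2 \Lambda^*_\RR$. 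In matrix terms this sends $c \in \M_2(\ZZ)$ to $\tfrac{1}{2}(S^T c - c^T S) \in \Lambda^2 \Lambda^*_\RR$, whose kernel is exactly $\{c : S^T c \in \S_2(\RR)\}$.

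Once the description of $\delta$ is secured, the two implications are immediate: the forward one was already proved, and the converse is a tautology of exactness. The only step that really requires work is the coordinate formula for $\delta$. This is the main obstacle, and it amounts to unwinding the \v{C}ech coboundary attached to the short exact sequence
$$0 \to \RR \to \mathrm{Aff}_\ZZ \to M \to 0.$$
Concretely, a $1$-cocycle $c \in \check{Z}^1(\mathcal{U}, M)$ is lifted locally to affine functions on an adapted (contractible, $\Lambda$-equivariant) open cover of $\TT A = N_\RR/\Lambda$, and the constant $2$-cocycle produced by taking the coboundary of the lift represents $\delta c$. The translation structure of $\TT A$ forces this $2$-cocycle to record exactly the anti-symmetric part of the pairing $(\lambda_1, \lambda_2) \mapsto \langle c(\lambda_1), S\lambda_2 \rangle$, which is the content of the claim. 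I would either carry out this \v{C}ech computation directly on a $\Lambda$-indexed cover, or invoke the analogous classical computation from \cite{griffiths2014principles} whose tropical transcription appears in \cite{mikhalkin2008tropical}; after that the proposition follows in one line.
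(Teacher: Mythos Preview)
Your approach is correct and essentially the same as the paper's: both use exactness of the long exact sequence to identify $\mathrm{Im}\ c_1$ with the kernel of the map $H^1(\TT A,M)\to H^2(\TT A,\RR)$, and both identify this map with skew-symmetrization of $S^T c$. The only difference is that the paper asserts the skew-symmetrization identification in the proof of the \emph{previous} proposition and then, for the present statement, simply observes in one line that the matrix of the bilinear form $B$ is $S^T c$ (since $c:\Lambda\to M$ composed with the dual inclusion $S^T:M\to\Lambda^*_\RR$ gives $B$); you instead spell out the \v{C}ech computation (or cite \cite{griffiths2014principles}, \cite{mikhalkin2008tropical}) to justify that identification, which is more work than the paper does but fills in what it takes for granted.
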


\begin{rem}
Notice that this condition is equivalent to $Sc^{-1}$ being symmetric when $c$ is invertible.
\end{rem}

\begin{proof}
The dual map $M\to\Lambda^*_\RR$ is given by the matrix $S^T$. Meanwhile, $c$ is the matrix of a map $\Lambda\to M$. Hence, the matrix of the bilinear map $B$ is given by $S^T c$, which is by composition a map $\Lambda\to\Lambda_\RR^*$.
\end{proof}

Thus, the image of the Chern class map is the intersection of the lattice $\hom(\Lambda,M)\subset\hom(\Lambda,M)_\RR$ with some hyperplane depending on the inclusion $S$. In particular, if it is chosen generically, it is empty. One other way to view it is that a given class in $\hom(\Lambda,M)$ being realized as a Chern class imposes a condition on the inclusion $S$.

\begin{expl}
Assume we have $S=\begin{pmatrix}
\alpha & \beta \\
\gamma & \delta \\
\end{pmatrix}$ and $c=\begin{pmatrix}
n & 0 \\
0 & 1 \\
\end{pmatrix}$. Then $c$ is realized as a Chern class in $\TT A$ if and only if $n\beta=\gamma$.
\end{expl}

We finish by giving the definition of a polarization.

\begin{defi}
The choice of a \textit{polarization} on $\TT A$ is the data of a $c$ such that $B$ is symmetric and positive definite bilinear form on $\Lambda$.
\end{defi}

Concretely, a line bundle with chern class $c$ can be seen as the quotient of the trivial line bundle $N_\RR\times\RR$ by the following action of $\Lambda$, extending the action by translation on $N_\RR$:
$$\lambda\cdot(x,\xi)=(x+\lambda,\xi+c(\lambda)(x)+\beta_\lambda).$$
The $\beta_\lambda$ are some real numbers. For this formula to define an action, we need to have the following relation:
$$\beta_{\lambda+\mu}=\beta_\lambda+\beta_\mu+c(\lambda)(\mu).$$
This is another way to see that $c$ defines a symmetric form. It implies that $\beta_\lambda=\frac{1}{2}c(\lambda)(\lambda)$ up to a linear map in $\lambda$. This linear map corresponds to the $\Lambda^*_\RR/M$ part in the Picard group and is defined by translations of the line bundle.

\subsection{complex and tropical $\theta$-functions}
\label{section theta function}

The goal of this section is to define tropical $\theta$-function in a setting more general than the one of \cite{mikhalkin2008tropical}, recall the definition of $\theta$-functions in the complex setting, as in \cite{griffiths2014principles} but with multiplicative notations, and relate both when considering a Mumford family of tori $\CC A_t$.

\subsubsection{Tropical $\theta$-functions.} Let us consider the line bundle $\L$ with fixed Chern class $c$, that is the quotient of $N_\RR\times\RR$ by the action $\lambda\cdot(x,\xi)=(x+\lambda,\xi+c(\lambda)(x)+\beta_\lambda)$ for $\beta_\lambda=\frac{1}{2}c(\lambda)(\lambda)$. All line bundles with Chern class $c$ are translate of this specific line bundle.

\begin{defi}
A $\theta$-function for $\L$ is a global convex piecewise affine function $\Theta$ on $N_\RR$ with integer slope and satisfying the quasi-periodicity condition
$$\Theta(x+\lambda)=\Theta(x)+c(\lambda)(x)+\beta_\lambda.$$
\end{defi}

We have the following proposition that expresses all the $\theta$-functions.

\begin{prop}
Let $\{m_0\}$ be a system of representatives of $M$ modulo $c(\Lambda)$. Any $\theta$-function is of the form $\theta(x)=\max_{m_0}\left( a_{m_0}+\theta_{m_0}(x)\right)$ for some choice of coefficients $a_{m_0}$, where
$$\Theta_{m_0}(x)=\langle m_0,x\rangle + \max_{l\in\Lambda}\left( \langle c(l),x\rangle-\frac{1}{2}\langle c(l),l\rangle -\langle m_0,l\rangle  \right),$$
\end{prop}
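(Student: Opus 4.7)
The plan is to proceed in three steps: (i) verify that each $\Theta_{m_0}$ is itself a well-defined $\theta$-function; (ii) check that this construction descends to $M/c(\Lambda)$, so that indexing the max by a set of representatives makes sense; (iii) show conversely that every $\theta$-function arises in this way, by grouping the affine pieces of $\Theta$ according to the coset of their slopes in $M/c(\Lambda)$.

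For step (i), convexity and piecewise affinity of $\Theta_{m_0}$ are immediate because it is a supremum of affine functions whose slopes $m_0+c(l)$ lie in $M$. What must actually be checked is that this sup is attained and yields a finite, locally finite, piecewise affine function; this is exactly where the polarization hypothesis enters, since positive-definiteness of $l\mapsto\tfrac{1}{2}\langle c(l),l\rangle$ forces quadratic decay and ensures that for each $x$ only finitely many $l\in\Lambda$ are near-maximizers. For the quasi-periodicity, I would substitute $l=l'+\mu$ inside the max defining $\Theta_{m_0}(x+\mu)$, expand using symmetry $\langle c(l'),\mu\rangle=\langle c(\mu),l'\rangle$, and observe that the integrand reassembles as
$$\langle c(l'),x\rangle-\tfrac12\langle c(l'),l'\rangle-\langle m_0,l'\rangle+\langle c(\mu),x\rangle+\beta_\mu-\langle m_0,\mu\rangle,$$
from which the quasi-periodicity identity $\Theta_{m_0}(x+\mu)=\Theta_{m_0}(x)+c(\mu)(x)+\beta_\mu$ drops out after restoring the prefactor $\langle m_0,x+\mu\rangle$.

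For step (ii), an analogous substitution $l=l'-\mu$ in $\Theta_{m_0+c(\mu)}$, again using symmetry, produces $\Theta_{m_0+c(\mu)}(x)=\Theta_{m_0}(x)+\beta_\mu+\langle m_0,\mu\rangle$, a pure constant shift. Hence choosing representatives of $M/c(\Lambda)$ amounts only to reparametrizing the coefficients $a_{m_0}$.

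For step (iii), the main point, start from a convex piecewise affine function $\Theta$ with integer slopes satisfying the quasi-periodicity relation, and write it as $\Theta(x)=\sup_{m\in S}(\langle m,x\rangle+a_m)$ where $S\subset M$ is its set of slopes. Quasi-periodicity implies that if the piece with slope $m$ is realized near some $x_0$, then near $x_0+\lambda$ the function $\Theta$ has slope $m+c(\lambda)$; thus $S$ is stable under $m\mapsto m+c(\lambda)$ and is a union of cosets of $c(\Lambda)$. Moreover, an explicit translation computation shows that $a_{m_0+c(\lambda)}=a_{m_0}-\langle m_0,\lambda\rangle-\tfrac12\langle c(\lambda),\lambda\rangle$, which is exactly the constant appearing in the definition of $\Theta_{m_0}$. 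Therefore the sub-maximum taken over slopes in the coset $m_0+c(\Lambda)$ equals $\Theta_{m_0}+a_{m_0}$ for a single free parameter $a_{m_0}$, and the overall $\Theta$ is the max over $m_0\in M/c(\Lambda)$ of these contributions.

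The main obstacle is step (i): checking that the sup over $\Lambda$ in the definition of $\Theta_{m_0}$ actually produces a locally finite piecewise affine function rather than a divergent or non-PL object. All of this rests on using positive-definiteness of $c$ to control the quadratic term $\tfrac12\langle c(l),l\rangle$; once this finiteness is secured, both the verification of quasi-periodicity and the decomposition of an arbitrary $\theta$-function are essentially bookkeeping around the symmetry of $c$.
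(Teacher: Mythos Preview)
Your argument is correct and at bottom the same as the paper's: both recognize that a convex piecewise affine $\Theta$ is encoded by its slope-indexed coefficients, and that quasi-periodicity forces the recursion $a_{m+c(\lambda)}=a_m-\langle m,\lambda\rangle-\beta_\lambda$, so that each $c(\Lambda)$-coset of slopes contributes exactly one $\Theta_{m_0}$ up to an additive constant. The only difference is packaging: the paper passes through the Legendre transform $\widehat{\Theta}(m)=\max_x(\langle m,x\rangle-\Theta(x))$ and derives the recursion on $\widehat{\Theta}$ by a two-line substitution, whereas you work directly with the sup-of-affines representation and argue geometrically that the slope set is $c(\Lambda)$-stable. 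Since your $a_m$ is essentially $-\widehat{\Theta}(m)$, the two are dual formulations of one argument; the Legendre transform makes the recursion drop out a bit more mechanically, while your version is more explicit about well-definedness and coset-independence (points the paper simply asserts with ``we check'').
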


\begin{proof}
As a convex function on $N_\RR$, a $\theta$-function $\Theta$ is determined by its Legendre transform:
$$\widehat{\Theta}(m)=\max_{x\in N_\RR}\left( \langle m,x\rangle -\Theta(x) \right).$$
The quasi-periodicity condition on $\Theta$ translates to the following quasi-periodicity condition on $\widehat{\Theta}$:
\begin{align*}
\widehat{\Theta}(m+c(\lambda)) & = \max_{x\in N_\RR}\left( \langle m+c(\lambda),x\rangle -\Theta(x) \right) \\
 & = \max_{x\in N_\RR}\left( \langle m,x\rangle +\langle c(\lambda),x\rangle -\Theta(x) - \beta_{-\lambda} \right) +\beta_{-\lambda} \\
 & = \max_{x\in N_\RR}\left( \langle m,x\rangle -\Theta(x-\lambda)\right)  +\beta_{-\lambda}\\
 & = \widehat{\Theta}(m)-\langle m,\lambda\rangle +\beta_{-\lambda} .\\
\end{align*}
It means that the values of the function $\widehat{\Theta}$ are determined by a system of representatives modulo $c(\Lambda)\subset M$. We check that the functions $\Theta_{m_0}$ satisfy the relations, as in \cite{mikhalkin2008tropical}. They are well-defined since the function $l\mapsto\langle c(l),l\rangle$ is positive definite. The result follows.
\end{proof}

\begin{rem}
The tropical $\theta$-functions are functions on $N_\RR$, but they can be seen as section of line bundles on $\TT A=N_\RR/\Lambda$.
\end{rem}

\subsubsection{Complex $\theta$-functions.} Concerning the complex line bundles on a complex abelian surface, we refer to the corresponding section of \cite{griffiths2014principles} for a detailed version of the following. Consider the abelian surface $\CC A=N_{\CC^*}/\Lambda$. This is a multiplicative presentation of an abelian surface. In  \cite{griffiths2014principles}, an abelian surface is presented as a quotient $N_\CC/\Omega$ for a matrix $\Omega=\begin{pmatrix}
I_2 & \tilde{Z}
\end{pmatrix}$, where $\tilde{Z}:\Lambda\to N_\CC$ is some matrix. The relation between both notations is provided by the exponential map
$$\tilde{z}\in N_\CC\longmapsto z=e^{2i\pi \tilde{z}}\in N_{\CC^*}.$$
In order to relate the complex setting to the tropical setting, we need to adopt the multiplicative notation. However, the reader might be more accustomed to the additive notation. Thus, we try to specify both notations as often as possible. Notation with a $\sim$ are for the additive setting, and without for the multiplicative one.

\medskip

Consider a line bundle on $\CC A$ whose pull-back to $N_{\CC^*}$ is the trivial bundle $N_{\CC^*}\times\CC$. Its Chern class is a skew-symmetric map on the lattice $N\oplus\Lambda$. Assume it is of the form $Q=\begin{pmatrix}
0 & -c \\
c^T & 0 \\
\end{pmatrix}$, where $c:\Lambda\to M$, and satisfies the Riemann bilinear relation from \cite{griffiths2014principles}: $\tilde{Z}c^{-1}$ is symmetric and $\im(-\tilde{Z}c^{-1})$ is positive. Moreover, there exists a function $\alpha:\Lambda\to\CC^*$ such that the line bundle is obtained quotienting $N_{\CC^*}\times\CC$ by the action of $\Lambda$:
$$\lambda\cdot(z,\xi)=(\lambda\cdot z, \alpha_\lambda z^{c(\lambda)}\xi).$$
For this relation to define an action, we need $\alpha$ to satisfy the relation $\alpha_{\lambda+\mu}=\alpha_\lambda\alpha_\mu \lambda^{c(\mu)}$. In particular, $\lambda^{c(\mu)}=\mu^{c(\lambda)}$, which amounts to the Riemann bilinear relation $\tilde{Z} c^{-1}$ is symmetric. For what follows, we fix $\alpha_\lambda=e^{i\pi \langle c(\lambda),Z\lambda\rangle}$, which satisfies the relation. This fixes the line bundle which was previously defined up to translation.

\begin{defi}
A $\theta$-function is a holomorphic function on $N_\CC$ (resp. $N_{\CC^*}$) that satisfies the following relations:
$$\left\{ \begin{array}{l}
\theta(\tilde{z}+n) =\theta(\tilde{z}) \text{ for }n\in N\subset N_\CC,\\
\theta(\tilde{z}+\tilde{Z}\lambda) =\alpha_\lambda e^{2i\pi \langle c(\lambda),\tilde{z}\rangle}\theta(\tilde{z}) \\
\end{array} \right.
\text{ or multiplicatively } \theta(\lambda\cdot z)=\alpha_\lambda z^{c(\lambda)} \theta(z).$$
\end{defi}

These relations need only to check it on a basis of the lattice. If $(e_1,e_2)$ is a basis of $N$ and $(\lambda_1,\lambda_2)$ a basis of $\Lambda$,
$$\left\{ \begin{array}{l}
\theta(\tilde{z}+e_j) =\theta(\tilde{z}) \\
\theta(\tilde{z}+\tilde{Z}\lambda_j) =e^{2i\pi \langle c(\lambda_j),\tilde{z}\rangle +i\pi \langle c(\lambda_j),\tilde{Z}\lambda_j\rangle}\theta(\tilde{z}) \\
\end{array} \right.
\text{ or multiplicatively } \theta(\lambda_j\cdot z)=e^{i\pi\langle c(\lambda_j),\tilde{Z}\lambda_j\rangle}z^{c(\lambda_j)} \theta(z).$$

\begin{prop}
Any $\theta$-function is a linear combination of the following $\theta$-functions:
\begin{align*}
 & \theta_{m_0}(\tilde{z})=e^{2i\pi\langle m_0,\tilde{z}\rangle}\sum_{l\in\Lambda}e^{-i\pi\langle c(l),Zl\rangle -2i\pi\langle m_0,Zl\rangle}e^{2i\pi\langle c(l),\tilde{z}\rangle} \\
\text{ or multiplicatively } & \theta_{m_0}(z)=z^{m_0}\sum_{l\in\Lambda}e^{-i\pi\langle c(l),Zl\rangle -2i\pi\langle m_0,Zl\rangle}z^{c(l)}. \\
\end{align*}
\end{prop}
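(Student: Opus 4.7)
My plan is to proceed by Fourier analysis on $N_{\CC^*}$, derive a recursion on the Laurent coefficients from the quasi-periodicity condition, and then check that the proposed $\theta_{m_0}$ exhaust the resulting space of solutions.

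\smallskip

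First, since a $\theta$-function is holomorphic on $N_{\CC^*}=(\CC^*)^2$, it admits a convergent Laurent expansion $\theta(z)=\sum_{m\in M}a_m z^m$ (the periodicity $\theta(\tilde{z}+n)=\theta(\tilde{z})$ for $n\in N$ is automatic when working multiplicatively). I then plug this into the remaining quasi-periodicity relation $\theta(\lambda\cdot z)=\alpha_\lambda z^{c(\lambda)}\theta(z)$. Using $(\lambda\cdot z)^m=e^{2i\pi\langle m,Z\lambda\rangle}z^m$ and $\alpha_\lambda=e^{i\pi\langle c(\lambda),Z\lambda\rangle}$, identifying the coefficient of $z^{m'}$ on both sides yields the recursion
\begin{equation*}
a_{m'}\,e^{2i\pi\langle m',Z\lambda\rangle}=a_{m'-c(\lambda)}\,e^{i\pi\langle c(\lambda),Z\lambda\rangle}\qquad\text{for all }\lambda\in\Lambda.
\end{equation*}
Thus the coefficients in a fixed coset of $c(\Lambda)\subset M$ determine one another, while coefficients in different cosets are independent. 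In particular, the space of $\theta$-functions has dimension at most $|M/c(\Lambda)|$, parametrized by the values $(a_{m_0})$ at a chosen system of representatives.

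\smallskip

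Next I would exhibit the specific solution whose only nontrivial coset data is $a_{m_0}=1$. Setting $m'=m_0+c(l)$, iteration of the recursion forces $a_{m_0+c(l)}=e^{-i\pi\langle c(l),Zl\rangle-2i\pi\langle m_0,Zl\rangle}$, which is exactly the coefficient appearing in the formula for $\theta_{m_0}$. The crucial consistency check is that this formula is independent of the path $0\rightsquigarrow l$ in $\Lambda$ taken to reach $c(l)$ from $m_0$: writing the recursion for a two-step path $0\to\lambda\to\lambda+\mu$ and comparing with the closed-form value at $l=\lambda+\mu$, the two match precisely when $\langle c(\lambda),Z\mu\rangle=\langle c(\mu),Z\lambda\rangle$, which is the symmetry part of the Riemann bilinear relation ($Zc^{-1}$ symmetric) already assumed on the line bundle.

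\smallskip

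Finally, I would verify convergence of the series defining $\theta_{m_0}$ on any compact set. Writing the general term in the form $e^{-i\pi\langle c(l),Zl\rangle+2i\pi\langle c(l),\tilde{z}-Zm_0/c\cdots\rangle}$ and decomposing $Z=X+iY$, its modulus is dominated by $e^{\pi\langle c(l),Yl\rangle}$ up to a term linear in $l$. The positivity part of the Riemann bilinear relation, namely that $-Yc^{-1}$ is positive definite, ensures that $\langle c(l),Yl\rangle=-(-Yc^{-1})(c(l),c(l))$ tends to $-\infty$ quadratically in $l$, so the series converges absolutely and uniformly on compacts, defining an honest holomorphic $\theta$-function. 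Since the $\theta_{m_0}$ have Laurent supports in pairwise disjoint cosets of $c(\Lambda)$ they are linearly independent, so they span the full space of $\theta$-functions, completing the proof.

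\smallskip

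The main obstacle I expect is the bookkeeping in the path-independence check of step two: one has to unfold $\langle c(l-\lambda),Z(l-\lambda)\rangle$ and cancel several cross terms against the prefactors $\alpha_\lambda$ and $e^{2i\pi\langle m',Z\lambda\rangle}$ before the Riemann symmetry reveals itself; everything else is routine.
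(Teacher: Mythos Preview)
Your proposal is correct and follows essentially the same approach as the paper: Laurent-expand on $N_{\CC^*}$, derive the recursion $a_{m-c(\lambda)}=e^{2i\pi\langle m,Z\lambda\rangle-i\pi\langle c(\lambda),Z\lambda\rangle}a_m$ from quasi-periodicity, conclude that a system of representatives of $M/c(\Lambda)$ determines everything, and verify that the $\theta_{m_0}$ satisfy the relations with convergence guaranteed by the positivity of the Riemann bilinear form. Your write-up is in fact more thorough than the paper's, which only says ``we check that the functions $\theta_{m_0}$ satisfy the relations'' and mentions convergence in one clause; your explicit path-independence check via the symmetry of $Zc^{-1}$ and your linear-independence remark from disjoint coset supports are details the paper leaves implicit.
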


\begin{proof}
Following \cite{griffiths2014principles}, any $\theta$-function can be developped in power series as follows
$$\theta(\tilde{z})=\sum_{m\in M}a_m e^{2i\pi\langle m,\tilde{z}\rangle}
\text{ or multiplicatively }
\theta(z)=\sum_{m\in M}a_m z^m.$$
As in the tropical case, the relation imposes the following relations on the coefficients
$$a_{m-c(\lambda)}=e^{2i\pi\langle m,\tilde{Z}\lambda\rangle}e^{-i\pi\langle c(\lambda),\tilde{Z}\lambda\rangle}a_m.$$
Thus, all the coefficients are determined by the coefficients $a_{m_0}$ for $\{ m_0\}$ a system of representatives of the quotient $M/c(\Lambda)$. We check that the functions $\theta_{m_0}$ satisfy the relations. These are well-defined since $l\mapsto \langle c(l),Zl\rangle$ has negative imaginary part, so that the series converge. The result follows.
\end{proof}

\subsubsection{Varying the surface.} We now consider a family of abelian surfaces $\CC A_t=N_{\CC^*}/\langle e^{A} t^S\rangle$ with a line bundle having Chern class $c:\Lambda\to M$. This corresponds to the choice $Z_t=\frac{1}{2i\pi}(A+S\log t)$, where $A$ is a complex matrix, and $S$ is an integer matrix chosen such that $Sc^{-1}$ is symmetric and positive definite. The study of the complex setting tells us that we can take as a basis of sections the $\theta$-functions having the following multiplicative expressions:
$$\theta_{m_0}(z)=z^{m_0}\sum_{l\in\Lambda} e^{-\frac{1}{2}\langle c(l),Al\rangle -\langle m_0,Al\rangle}t^{-\frac{1}{2}\langle c(l),Sl\rangle -\langle m_0,Sl\rangle}z^{c(l)}.$$
Taking $\log_t$ and the limit as $t$ goes to $\infty$, we have the following propoition.

\begin{prop}
The tropical limit of the $\theta$-function $\theta_{m_0}$ is the tropical $\theta$-function $\Theta_{m_0}$.
\end{prop}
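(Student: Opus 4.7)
The plan is a straightforward termwise Maslov dequantization, identical in spirit to the corresponding argument in \cite{mikhalkin2008tropical}. Fix $x\in N_\RR$ and parameterize $z\in N_{\CC^*}$ by $\log_t|z|=x$, so that each monomial $z^{m_0+c(l)}$ has modulus $t^{\langle m_0+c(l),x\rangle}$. Collecting the prefactors from the formula for $\theta_{m_0}$, the modulus of the $l$-th term in the sum is
\begin{equation*}
\bigl|e^{-\tfrac{1}{2}\langle c(l),Al\rangle-\langle m_0,Al\rangle}\bigr|\cdot t^{-\tfrac{1}{2}\langle c(l),Sl\rangle-\langle m_0,Sl\rangle}\cdot t^{\langle m_0+c(l),x\rangle},
\end{equation*}
so its $\log_t$ equals
\begin{equation*}
\langle m_0,x\rangle+\Bigl(\langle c(l),x\rangle-\tfrac{1}{2}\langle c(l),Sl\rangle-\langle m_0,Sl\rangle\Bigr)+\frac{\re\bigl(-\tfrac{1}{2}\langle c(l),Al\rangle-\langle m_0,Al\rangle\bigr)}{\log t}.
\end{equation*}
Since the pairings $\langle c(l),l\rangle$ and $\langle m_0,l\rangle$ appearing in $\Theta_{m_0}$ tacitly use the inclusion $S:\Lambda\to N_\RR$, the first bracketed quantity is exactly the expression inside the $\max$ defining $\Theta_{m_0}(x)$, and the correction term vanishes as $t\to\infty$.

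Next I would establish uniform convergence of the series. Because $c$ is a polarization, the form $l\mapsto\langle c(l),Sl\rangle$ is symmetric and positive definite on $\Lambda$. Therefore the $t$-exponent in the $l$-th term is a quadratic function of $l$ with negative-definite leading part, so for $x$ in any fixed compact subset and $t$ large enough the tail of the series is dominated by a Gaussian-like sum. This gives absolute and uniform convergence of $\theta_{m_0}$, and also shows that for any fixed $x$ only finitely many terms can come close to the maximum.

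To pass to the tropical limit, I would argue as follows. For $x$ outside the codimension-one locus where two or more lattice points $l\in\Lambda$ simultaneously realize the maximum of $\langle c(l),x\rangle-\tfrac{1}{2}\langle c(l),Sl\rangle-\langle m_0,Sl\rangle$, the unique maximizing term strictly dominates all others by a factor $t^{-\epsilon}$ for some $\epsilon>0$, hence $\log_t|\theta_{m_0}(z)|=\Theta_{m_0}(x)+o(1)$ as $t\to\infty$. On the exceptional walls the corresponding finitely many maximizing terms could a priori cancel by phase, but this would only lower $\log_t|\theta_{m_0}|$ below $\Theta_{m_0}$; continuity of $\Theta_{m_0}$ together with the generic limit from the complement pins down the value on the walls as well.

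The only real subtlety is this wall behavior, and it is the step I expect to be the main obstacle if one wants a pointwise rather than almost-everywhere statement; for correspondence-theoretic purposes one only needs convergence in a distributional/current sense, which follows from the above and the fact that the amoebas of the $\theta_{m_0}(z)$, rescaled by $1/\log t$, converge to the corner locus of $\Theta_{m_0}$. Everything else is routine Maslov dequantization of an absolutely convergent Gaussian series.
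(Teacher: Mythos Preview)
Your proposal is correct and follows exactly the approach the paper has in mind: the paper does not give a separate proof environment for this proposition at all, but simply states in the preceding line ``Taking $\log_t$ and the limit as $t$ goes to $\infty$, we have the following proposition,'' treating the termwise Maslov dequantization as immediate. Your write-up is a careful unpacking of that one sentence---computing the $\log_t$ of each term, invoking positive definiteness of $l\mapsto\langle c(l),Sl\rangle$ for convergence, and discussing the wall locus---so it is more thorough than the paper but not a different argument.
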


\subsection{Curves in abelian surfaces}

\subsubsection{Parametrized curves.} For sake of completeness of the present paper, we include a quick reminder of tropical curves in abelian surfaces from the parametric point of view and refer the reader to the first paper for more details.

\begin{defi}
An \textit{abstract tropical curve} is a finite metric graph $\Gamma$. A parametrized tropical curve is a map $h:\Gamma\to\TT A$ such that:
\begin{itemize}[label=-]
\item If $e$ is an edge, $h$ is affine with integer slope on the edges of $\Gamma$. The slope is of the form $w_e u_e$, where $w_e$ is an integer called weight of the edge, and $u_e\in N$ is a primitive vector. We set $\delta_\Gamma=\mathrm{gcd}_{e}w_e$ to be the gcd of the weight of the edges.
\item For each vertex $V$ of $\Gamma$, one has the balancing condition: $\sum_{e\ni V} w_e u_e=0$, where $w_e u_e$ denotes the slope of $h$ on $e$ oriented outside $V$.
\end{itemize}
\end{defi}

\begin{defi}
Let $h:\Gamma\to\TT A$ be a parametrized tropical curve.
\begin{itemize}[label=-]
\item The curve is \textit{irreducible} if $\Gamma$ is connected.
\item If the curve is irreducible, its \textit{genus} is the first Betti number $b_1(\Gamma)$.
\item Its \textit{degree} is the class that it realizes inside $H_{1,1}(\TT A)\simeq \Lambda\otimes N$.
\item The curve is said to be \textit{simple} if $\Gamma$ is trivalent en $h$ is an immersion.
\end{itemize}
\end{defi}

We also have the following statement proved in the first paper that enables a concrete computation of the degree $C$ of a tropical curve $h:\Gamma\to \TT A$.

\begin{prop}
Let $C\in\Lambda\otimes N$ be a class and $S:\Lambda\hookrightarrow N_\RR$ be the inclusion defining the abelian surface $\TT A=N_\RR/\Lambda$. Then,
\begin{itemize}[label=-]
\item The class $C$ is realized by a tropical curve if and only if the product $CS^T\in\S_2^{++}(\R)$.
\item Given a parametrized tropical curve $h:\Gamma\to N_\RR$, the class $C\in\Lambda\otimes N$ that it realizes is obtained by adding the slopes of the edges intersected by two loops realizing a basis of $H_1(\TT A,\ZZ)$.
\end{itemize}
\end{prop}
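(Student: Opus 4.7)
The plan is to prove the two bullets in order, using the explicit formula of the second to derive the characterization of the first. First I would make precise the identification between $\Lambda\otimes N$, linear maps $M\to\Lambda$, and (after composing with $S$) bilinear forms on $\Lambda$: the product $CS^T$ encodes $C\in\Lambda\otimes N$ as such a bilinear form, and keeping this dictionary straight is the main bookkeeping task.

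For the second bullet, I would fix a basis $(\lambda_1,\lambda_2)$ of $\Lambda\simeq H_1(\TT A,\ZZ)$ and choose representing loops $\gamma_1,\gamma_2\subset\TT A$ in general position with respect to $h(\Gamma)$, so that intersections occur only in the relative interior of edges. For each edge $e$ crossed transversely by $\gamma_i$ take the slope vector $w_e u_e\in N$, signed by the orientation of the crossing, and sum these contributions to define $n_i\in N$; the claim is that $C=\lambda_1\otimes n_1+\lambda_2\otimes n_2$. The class must be independent of the choice of $\gamma_i$ within its homology class, which I would reduce to the behaviour under small deformations: when $\gamma_i$ is pushed across a vertex $V$ of $\Gamma$, the change in $n_i$ is exactly $\pm\sum_{e\ni V}w_e u_e$, which vanishes by the balancing condition. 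A change of basis of $\Lambda$ then transforms the $n_i$ correspondingly, so the element of $\Lambda\otimes N$ is intrinsic.

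For the first bullet, necessity proceeds by computing $\langle CS^T\lambda_i,\lambda_j\rangle=\langle n_i,S\lambda_j\rangle$ and interpreting the right-hand side as the signed intersection of $h(\Gamma)$ with the $2$-chain swept out by translating $\gamma_i$ along $S\lambda_j$ inside $\TT A$. Symmetry in $(i,j)$ is then a Stokes-type identity on the torus: the intersection of a $1$-cycle with the $2$-chain swept by translating a second $1$-cycle along the direction of a third is symmetric in the last two entries. Positive definiteness follows from genuine positivity of self-intersection: sweeping $h(\Gamma)$ along a generic direction $S\lambda$ produces a $2$-chain that meets $h(\Gamma)$ transversely in a positive number of points whenever $\lambda\neq 0$, provided the image of $\Gamma$ does not collapse onto a proper subtorus. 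For the converse, I would invoke the theory of tropical $\theta$-functions developed in Section~\ref{section theta function}: given $C$ with $CS^T$ symmetric positive definite, the corresponding map $\Lambda\to M$ is the Chern class of a polarization on $\TT A$, and the corner locus of any tropical $\theta$-function attached to this line bundle is an effective tropical curve realizing $C$.

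The step I expect to be the main obstacle is identifying the symmetry and positivity of $CS^T$ tropically without circularity; both properties are transparent on the complex side, where they record that $C$ is the Chern class of an ample line bundle, but in the purely tropical picture one has to argue via the intersection-theoretic interpretation above, and take some care that the sweeping-out $2$-chains are in general position with respect to $h(\Gamma)$ and that the resulting counts match the tensor-product expression $\lambda_1\otimes n_1+\lambda_2\otimes n_2$.
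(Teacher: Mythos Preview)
The paper does not actually prove this proposition: it is introduced with the phrase ``We also have the following statement proved in the first paper,'' so there is no proof here to compare against. Your proposal is therefore being measured against a reference that lives elsewhere in the series.

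That said, your outline is broadly the standard one and is essentially what one expects the first paper to contain. The second bullet is routine: summing signed edge slopes along a transverse loop and invoking balancing to show independence of the representative is exactly how the degree class is defined, and your change-of-basis remark correctly shows the resulting element of $\Lambda\otimes N$ is intrinsic. For the first bullet, your converse via tropical $\theta$-functions is the right mechanism and is consistent with Section~\ref{section theta function} of the present paper: a class $C$ with $CS^T$ symmetric positive definite corresponds (via the Poincar\'e-dual Chern class $c$, with $Sc^{-1}$ symmetric positive definite) to a polarization, and the corner locus of any $\Theta_{m_0}$ realizes $C$.

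The one place your sketch is genuinely soft is the necessity of symmetry. Calling it a ``Stokes-type identity'' is not wrong in spirit, but as written it is not a proof: you need to pin down that $\langle n_i,S\lambda_j\rangle$ equals the algebraic intersection number of $h_*[\Gamma]$ with the $2$-torus spanned by $\lambda_i$-translates of $\gamma_j$ (or equivalently evaluate the $(1,1)$-class of $\Gamma$ on $\lambda_i\wedge\lambda_j$), and then the symmetry is the statement that this pairing only depends on $h_*[\Gamma]\in H_2(\TT A,\ZZ)$ through its $(1,1)$-component. Positivity is likewise slightly delicate: your sweeping argument gives $\langle CS^T\lambda,\lambda\rangle\geqslant 0$, but to get strict positivity you must rule out the curve being contained in a subtorus, and you should say explicitly why a nontrivial balanced curve in $\TT A$ cannot have $n_\lambda=0$ for some $\lambda\neq 0$ unless its image lies in a translate of a $1$-dimensional subtorus (in which case it does not realize a full-rank class). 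These are not hard to fill in, but they are the actual content of the ``only if'' direction.
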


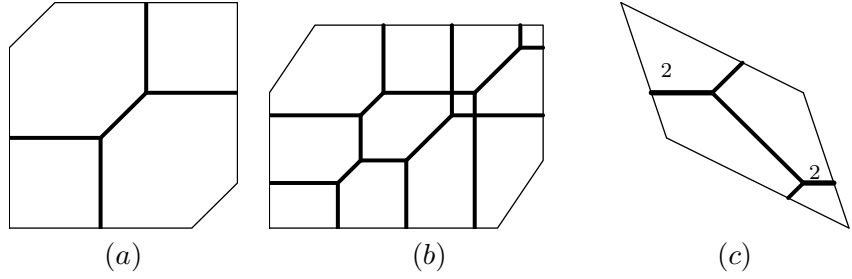
\begin{figure}
\begin{center}
\begin{tabular}{ccc}
\begin{tikzpicture}[line cap=round,line join=round,>=triangle 45,x=0.3cm,y=0.3cm]
\clip(0,0) rectangle (10,10);
\draw [line width=0.5pt] (0,0)--++ (8,0)--++ (2,2)--++ (0,8)--++ (-8,0)--++ (-2,-2)--++ (0,-8);

\draw [line width=1.5pt] (4,0)-- (4,4);
\draw [line width=1.5pt] (0,4)-- (4,4);
\draw [line width=1.5pt] (4,4)-- (6,6);
\draw [line width=1.5pt] (6,6)-- (6,10);
\draw [line width=1.5pt] (6,6)-- (10,6);

\begin{scriptsize}

\end{scriptsize}
\end{tikzpicture}
&
\begin{tikzpicture}[line cap=round,line join=round,>=triangle 45,x=0.3cm,y=0.3cm]
\clip(0,0) rectangle (14,11);
\draw [line width=0.5pt] (0,0)--++ (10,0)--++ (2,3)--++ (0,6)--++ (-10,0)--++ (-2,-3)--++ (0,-6);

\draw [line width=1.5pt] (0,2)--++ (3,0)--++ (1,1)--++ (2,0)--++ (2,2)--++ (4,0);
\draw [line width=1.5pt] (0,5)--++ (4,0)--++ (1,1)--++ (4,0)--++ (2,2)--++ (1,0);
\draw [line width=1.5pt] (3,0)--++ (0,2);
\draw [line width=1.5pt] (6,0)--++ (0,3);
\draw [line width=1.5pt] (9,0)--++ (0,6);
\draw [line width=1.5pt] (4,3)--++ (0,2);
\draw [line width=1.5pt] (8,5)--++ (0,4);
\draw [line width=1.5pt] (5,6)--++ (0,3);
\draw [line width=1.5pt] (11,8)--++ (0,1);

\begin{scriptsize}

\end{scriptsize}
\end{tikzpicture}
&
\begin{tikzpicture}[line cap=round,line join=round,>=triangle 45,x=0.3cm,y=0.3cm]
\clip(0,0) rectangle (10,10);
\draw [line width=0.5pt] (2,4)-- (0,10);
\draw [line width=0.5pt] (2,4)-- (10,0);
\draw [line width=0.5pt] (0,10)-- (8,6);
\draw [line width=0.5pt] (10,0)-- (8,6);

\draw [line width=2pt] (1.33333333333,6)-- (4,6);
\draw [line width=1.5pt] (4,6)-- (8,2);
\draw [line width=1.5pt] (4,6)-- (5.333333333,7.33333333);
\draw [line width=1.5pt] (8,2)-- (7.333333333,1.33333333);
\draw [line width=2pt] (8,2)-- (9.3333333333,2);

\begin{scriptsize}
\draw (2,7) node {$2$};
\draw (8.5,2.5) node {$2$};
\end{scriptsize}
\end{tikzpicture}
\\
$(a)$ & $(b)$ & $(c)$\\
\end{tabular}

\caption{\label{figure example tropical curves}Three examples of tropical curves in tropical tori. }
\end{center}
\end{figure}

\begin{expl}
On Figure \ref{figure example tropical curves}, we can see three different examples of tropical curves in different tropical tori $\TT A$. Each tropical torus is represented either by a parallelogram or by an hexagon whose pairs of opposite edges have been glued together. The curve $(a)$ has genus $2$ and degree $\begin{pmatrix}
1 & 0 \\
0 & 1 \\
\end{pmatrix}$, curve $(b)$ genus $5$ and degree $\begin{pmatrix}
2 & 0 \\
0 & 3 \\
\end{pmatrix}$, and curve $(c)$ genus $2$ and degree $\begin{pmatrix}
2 & 1 \\
0 & 1 \\
\end{pmatrix}$.
\end{expl}

\subsubsection{Implicit curves.} In the planar setting, tropical curve inside $N_\RR$ have two possible descriptions: they are either the image of a parametrized tropical curve, or the corner locus of a \textit{tropical polynomial}. For an abelian surface with a line bundle, the sections play the role of the tropical polynomials, and the distinguished tropical $\theta$-functions $\Theta_{m_0}$ defined in the previous section play the role of the monomials in $N_\RR$.

\begin{defi}
Let $c$ be the Chern class of a line bundle $\L$. Let $\Theta(x)=\max_{m_0}\left( a_{m_0}+\Theta_{m_0}(x)\right)$ be a tropical $\theta$-function, defining a section of $\L$. Then the corner locus of $\theta$ is called a \textit{planar tropical curve} in $\TT A$.
\end{defi}

As for tropical curves inside $N_\RR$, any planar tropical curve in $\TT A$ admits a parametrization $h:\Gamma\to\TT A$. A planar tropical curve is \textit{irreducible} if it cannot be written as the union of two planar tropical curves. The \textit{genus} of an irreducible planar tropical curve is the minimal genus among the possible parametrizations of the curve.

The following proposition relates the parametric point of view and the implicit point of view by giving the relation between the class $C$ realized by a tropical curve $\Gamma$, and the Chern class $c$ of the line bundle $O(\Gamma)$.
\begin{prop}
Let $h:\Gamma\to\TT A$ be a parametrization of a tropical curve that is a section of a line bundle $\L$ with Chern class $c:\Lambda\to M$. Let $C:\Lambda^*\to N$ be the degree of the parametrized tropical curve. Then $C$ and $c$ are Poincar\'e dual to each other: $c$ is the comatrix of $C$.
\end{prop}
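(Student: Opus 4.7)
I would prove this by computing both the Chern class $c$ and the degree $C$ from the same intersection data: the crossings of the $\Lambda$-periodic lift of $h(\Gamma)$ in $N_\RR$ with transverse paths representing a basis of $H_1(\TT A,\ZZ)=\Lambda$. The comatrix identity will emerge from the $90^\circ$-rotation duality between, at each edge $e$ of $\Gamma$, the primitive direction $u_e\in N$ and the primitive conormal $n_e\in M$.

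For each basis vector $\lambda_i\in\Lambda$, I would choose a smooth path $\tilde\gamma_i$ in $N_\RR$ from some base point $x_0$ to $x_0+S\lambda_i$, transverse to the lifted curve and avoiding its vertices. Differentiating the quasi-periodicity $\Theta(x+S\lambda)=\Theta(x)+\langle c(\lambda),x\rangle+\beta_\lambda$ gives $\nabla\Theta(x_0+S\lambda_i)-\nabla\Theta(x_0)=c(\lambda_i)$. The gradient $\nabla\Theta\in M$ is locally constant on the complement of the lifted curve, so this difference is the sum of the slope jumps of $\Theta$ along $\tilde\gamma_i$. At a crossing with an edge $e$ of primitive direction $u_e$ and weight $w_e$, continuity of $\Theta$ along $e$ forces the jump to lie in $u_e^\perp\subset M$, and the integer length of this jump along that line equals $w_e$ with a sign $\varepsilon_e\in\{\pm 1\}$ fixed by the convexity of $\Theta$ and the direction of crossing. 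Hence
\[
c(\lambda_i)=\sum_{e\cap\tilde\gamma_i}\varepsilon_e\, w_e\, n_e\in M,
\]
where $n_e\in M$ is the chosen primitive generator of $u_e^\perp$.

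By the preceding proposition, the very same crossings with the same signs encode the degree data $v_i:=\sum_{e\cap\tilde\gamma_i}\varepsilon_e\, w_e\, u_e\in N$, which assembles into the matrix of $C$. The crucial observation is that in $2$-dimensional lattices, the map sending a primitive $u\in N$ to the primitive generator of $u^\perp\subset M$ is the $90^\circ$-rotation $J:N\to M$, i.e.\ contraction with the volume form on $N$. Thus $c(\lambda_i)=Jv_i$ term by term, and translating this identity through the identifications $c\in\mathrm{Hom}(\Lambda,M)$ and $C\in\Lambda\otimes N\simeq\mathrm{Hom}(\Lambda^*,N)$, where the latter implicitly uses the area form on $\Lambda$ to swap $\Lambda^*$ with $\Lambda$, yields exactly the comatrix identity $c=\mathrm{Com}(C)$. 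The main subtlety I expect is tracking the orientation conventions: verifying that the signs $\varepsilon_e$ produced by the convexity of $\Theta$ match those appearing in the definition of $C$, and combining these consistently with the dualities on $\Lambda$ and on $N$ to upgrade the pointwise rotation $c(\lambda)=Jv_\lambda$ into the full comatrix relation.
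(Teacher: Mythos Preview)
Your approach is correct and is essentially an explicit unpacking of the paper's one-sentence proof, which simply invokes Poincar\'e duality: ``the integration of $c$ over a cycle is obtained by intersecting the cycle with $C$.'' Computing $c(\lambda_i)$ as the accumulated slope-jump of $\Theta$ along $\tilde\gamma_i$ is precisely the left-hand side, and summing the weighted edge directions at the same crossings is precisely the right-hand side; the rotation $J:N\to M$ is the rank-$2$ incarnation of that duality.

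One confirmation of the bookkeeping you flag at the end: the identity $c(\lambda_i)=Jv_i$ alone would only give $[c]=J\cdot[v_1\,|\,v_2]$, which is \emph{not} the comatrix of $[C]$. The point is that intersecting with the loop $\gamma_i$ does not read off the $\lambda_i$-component of $C\in\Lambda\otimes N$ but rather (up to sign) the $\lambda_j$-component for $j\neq i$, via the intersection form on $\Lambda$. So the second rotation $J_\Lambda:\Lambda\to\Lambda^*$ you mention is genuinely needed, and the comatrix formula then drops out of the $2\times 2$ identity $\mathrm{Com}(A)=J^{-1}A^{T}J$. You already anticipate this; just be aware that both rotations, not only $J_N$, are doing work.
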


\begin{proof}
The statement comes from Poincar\'e duality inside $\TT A$: the integration of $c$ over a cycle is obtained by intersecting the cycle with $C$. 
\end{proof}

Up to a multiplicative constant, it means that $C=(c^{-1})^T$. We can then check that the conditions $Sc^{-1}\in\S^{++}_2(\RR)$ and $CS^T\in\S^{++}_2(\RR)$ are equivalent. By abuse of notation, by tropical curve we mean a parametrized tropical curve or a planar tropical curve.

\begin{rem}
Notice that as in the case of tropical curves inside $N_\RR$, tropical curves are dual to some particular subdivisions of the torus $M_\RR/c(\Lambda)$.
\end{rem}

\begin{expl}
The curve on Figure \ref{figure example tropical curves} $(a)$ is the corner locus of the unique $\theta$-function associated to the principal polarization of $\TT A$. The curve on $(c)$ is the corner locus of one pf the two $\theta$-functions given by the polarization. To get the curve on $(b)$, one needs several $\theta$-functions.
\end{expl}

\subsection{From curves to linear systems}
\label{section curves to linear system}

We have seen that there are two points of views for curves inside abelian surfaces: either as parametrized curve, or as zero-locus of a section of some line bundle $\L$. Tautologically, a curve $\CCC$ is always the zero-locus of a section of the line bundle $\O(\CCC)$. The goal of this section is to relate the two points of views: how to recover the line bundle $\O(\CCC)$ from a parametrization of the curve. By that, we mean that given two curves $\CCC_0$ and $\CCC_1$ in the same homology class, how to recover the element $\O(\CCC_1-\CCC_0)$ of the dual torus $\Lambda^*_{\CC^*}/M$.

\medskip

Let $\CC A$ be an abelian surface and $\CCC_0$ and $\CCC_1$ be two curves inside $\CC A$ realizing the same homology class. The divisor $\CCC_1-\CCC_0$ defines a line bundle $\O(\CCC_1-\CCC_0)$ of degree $0$. It belongs to the zero-component of the Picard group of $\CC A$. If $\CCC_0$ and $\CCC_1$ were given as zero locus of sections of their respective line bundles represented by two $\theta$-functions $\theta_0,\theta_1:N_{\CC^*}\to\CC$, then the quotient satisfies the relation
$$\frac{\theta_1}{\theta_0}(\lambda\cdot z)=K_\lambda\frac{\theta_1}{\theta_0}(z),$$
for some $K_\lambda$ where $\lambda\in\Lambda\mapsto K_\lambda\in\CC^*$ is a morphism. As $\theta_1$ and $\theta_0$ are defined up to monomials, $K$ is only well-defined up to the action of $M$, which is not surprising since it is an element in the dual torus. If $K_\lambda=1$, it would mean that $\CCC_0$ and $\CCC_1$ belong to the same linear system: they are equivalent since $\frac{\theta_1}{\theta_0}$ descends to a meromorphic function on $\CC A$. Finding the line bundle $\O(\CCC_1-\CCC_0)$ amounts to find $K$. This is more or less straightforward if $\theta_1$ and $\theta_0$ are given. However, this is not the case if the curves are given by parametrizations $\varphi_0:\CC C_0\to\CC A$ and $\varphi_1:\CC C_1\to\CC A$.

\medskip

First, we explain how to recover $K$ from the curves in the tropical case and close to the tropical limit before getting a general statement for complex tori.

	\subsubsection{Tropically and close to the tropical limit}

Let $h_0:\Gamma_0\to\TT A$ and $h_1:\Gamma_1\to\TT A$ be two parametrized tropical curves realizing the same homology class $C$ in a tropical abelian variety $\TT A=N_\RR/\Lambda$. Both curves lifts to periodic curves inside $N_\RR$ that we denote by the same letters, and that are the corner locus of a tropical $\theta$-function $\Theta_0$ and $\Theta_1$, defined up to addition of an affine function with slope in $M$. The function $f(x)=\Theta_1(x)-\Theta_0(x)$ satisfies the quasi-periodicity relation
$$f(x+\lambda)=f(x)+K_\lambda,$$
where $\lambda\mapsto K_\lambda$ is linear. Adding a monomial to one of the $\theta$ functions changes $f$ and thus $K$. Thus, only the class of $K$ in $\Lambda^*_\RR/M$ is well-defined. We intend to recover the element $K\in\Lambda^*_\RR/M$.

\medskip

For the statement as well as the proof, recall the notion of \textit{moment of an edge} for a tropical curve. Let $h:\Gamma\to N_\RR$ be a tropical curve, $e$ be an oriented edge where $h$ has slope $w_e u_e$. The moment of the oriented edge is $\det(w_eu_e,p)$, where $p$ is any point on the edge. It corresponds to the position of the edge along a transversal axis.

	\begin{theo}\label{theorem relation moment linear system tropical}
	Let $h_0:\Gamma_0\to\TT A$ and $h_1:\Gamma_1\to\TT A$ be two parametrized tropical curves realizing the same homology class $C$, lifting to periodized curves inside $N_\RR$. Choose some point $p_0$ in the complement of the curves inside $N_\RR$. The degree $0$ line bundle $\O(\Gamma_1-\Gamma_0)$ in the dual torus $\Lambda^*_\RR/M$ is represented by the element $K$ of $\Lambda^*_\RR$ that maps a class $\lambda$ to the following: choose a path $\gamma$ between $p_0$ and $p_0+\lambda$, and add the moments of the oriented edges of $\Gamma_0$ and $\Gamma_1$ met by the path, oriented so that the intersection sign between $\gamma$ and the oriented edge is $-$ for $\Gamma_1$ and $+$ for $\Gamma_0$.
	\end{theo}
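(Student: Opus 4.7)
The plan is to compute $K_\lambda = f(p_0+\lambda) - f(p_0)$ directly, where $f = \Theta_1 - \Theta_0$ and $\Theta_i$ is a tropical $\theta$-function whose corner locus is the lift of $\Gamma_i$ to $N_\RR$. Since $\Gamma_0$ and $\Gamma_1$ realize the same class $C$, the proposition identifying the Chern class with the comatrix of $C$ guarantees that $\Theta_0$ and $\Theta_1$ quasi-periodize with the same Chern class $c$; the quadratic terms of their quasi-periodicities therefore cancel in $f$, leaving $f(x+\lambda) - f(x) = K_\lambda$ linear in $\lambda$ and independent of $x$, so that $K_\lambda = f(p_0+\lambda) - f(p_0)$ for any $p_0$ in the complement of the two curves.

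Next, I would choose a generic path $\gamma$ from $p_0$ to $p_0+\lambda$ meeting the edges of $\Gamma_0\cup\Gamma_1$ transversally at $x_1,\ldots,x_N$, set $x_0 = p_0$ and $x_{N+1} = p_0+\lambda$, and denote by $m_i \in M$ the constant gradient of $f$ on the subinterval $[x_i, x_{i+1}]$. Telescoping followed by Abel summation gives
\begin{equation*}
K_\lambda \;=\; \sum_{i=0}^N \langle m_i, x_{i+1}-x_i\rangle \;=\; \langle m_N, p_0+\lambda\rangle - \langle m_0, p_0\rangle - \sum_{j=1}^N \langle m_j - m_{j-1}, x_j\rangle.
\end{equation*}
Differentiating the quasi-periodicity of each $\Theta_i$ yields $\nabla\Theta_i(x+\lambda) = \nabla\Theta_i(x) + c(\lambda)$, so $\nabla f$ is $\Lambda$-periodic and $m_N = m_0$. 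The boundary contribution then collapses to $\langle m_0, \lambda\rangle$, which is the evaluation of a monomial in $M$ on $\lambda$ and hence vanishes in the dual torus $\Lambda^*_\RR/M$.

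What remains is to recognize each summand $-\langle m_j - m_{j-1}, x_j\rangle$ as the moment of an oriented edge. The gradient jump of a convex piecewise linear function across an edge $e$ of primitive direction $u_e$ and weight $w_e$ is $\pm w_e u_e^\vee$, where $u_e^\vee \in M$ is the primitive covector with $\langle u_e^\vee, n\rangle = \det(u_e, n)$, the sign being fixed by convexity and the direction in which $\gamma$ crosses $e$. Because $f = \Theta_1 - \Theta_0$, edges of $\Gamma_0$ and of $\Gamma_1$ contribute with opposite signs; orienting each crossed edge so that its intersection sign with $\gamma$ is $+$ for $\Gamma_0$ and $-$ for $\Gamma_1$ precisely absorbs this asymmetry, and each term becomes $\det(w_{e_j}u_{e_j}, x_j)$, i.e.\ the moment of the oriented edge at the crossing point. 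Summing then reproduces the formula of the theorem.

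The hard part will be this sign analysis: one must carefully align the convention for $u_e^\vee$, the convexity of the $\Theta_i$, and the prescribed edge orientations so that every crossing contributes $+1$ (rather than $-1$) times the moment. Independence of the resulting representative $K \in \Lambda^*_\RR$ from the auxiliary choices (the $\Theta_i$ modulo monomials, the base point $p_0$, and the path $\gamma$) is automatic modulo $M$, since any such change modifies $K_\lambda$ only by the pairing of $\lambda$ with a monomial in $M$.
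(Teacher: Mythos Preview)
Your proposal is correct and follows essentially the same route as the paper: telescope $f(p_0+\lambda)-f(p_0)$ along a transverse path, Abel-sum, and identify the slope jumps $m_j-m_{j-1}$ with the covectors $\det(w_eu_e,-)$ to recover the moments. The only cosmetic difference is that the paper normalizes $f$ so that its slope at $p_0$ is zero (hence $m_0=m_N=0$ and the boundary terms vanish on the nose), whereas you keep $m_0$ arbitrary and observe that the leftover $\langle m_0,\lambda\rangle$ dies in $\Lambda^*_\RR/M$; both are fine, and the paper's own sign discussion is no more detailed than yours.
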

	
	\begin{rem}
	It is important to take the same base-point for lifting all the loops: during the computation, we assume that the slope of $f$ at $p_0$ is $0$, and this determines uniquely $f$, not up to addition of a monomial anymore.
	\end{rem}

	\begin{proof}
Up to addition of a monomial, we can assume that $f$ has slope $0$ at $p_0$. Let $\lambda$ be an element of $\Lambda$, lifted to a path between $p_0$ and $p_0+\lambda$ that intersects $\Gamma_0$ and $\Gamma_1$ transversally. Let $x_1,\dots,x_{n-1}$ be the intersection points between the path and the lifts of $\Gamma_0$ and $\Gamma_1$. Let also $x_0=p_0$ and $x_n=p_0+\lambda$. Let $m_i$ is the slope of $f$ in the region where the path between $x_i$ and $x_{i+1}$ lies. By assumption, $m_0=m_n=0$. We have that
\begin{align*}
K_\lambda = f(x_0+\lambda)-f(x_0) & = \sum_{i=0}^{n-1} f(x_{i+1})-f(x_i) \\
& = \sum_{i=0}^{n-1}\langle m_i,x_{i+1}-x_i\rangle, \\
& = \sum_{i=1}^{n} \langle m_{i-1},x_{i}\rangle - \sum_{i=0}^{n-1} \langle m_i,x_{i}\rangle\\
& = \sum_{i=1}^{n-1} \langle m_{i-1}-m_i,x_i\rangle. \\
\end{align*}
Furthermore, $x_i$ belongs to an edge $e$ of either $\Gamma_0$ or $\Gamma_1$ directed by $w_eu_e$. The monomial $m_i-m_{i-1}$ is equal to $\det(w_eu_e,-)$ when $e$ is oriented so that the intersection index $e\cdot\gamma$ at $x_i$ is positive. The result follows since $\Gamma_1$ appears positively in the divisor and $\Gamma_0$ negatively.
	\end{proof}
	
	Going around the same steps, one easily obtains a similar statement for phased-tropical varieties. In other words, one does not only care about the modulus of the numbers, but also their argument. This relates the line bundles $\O(\CCC_1-\CCC_0)$ to the moment of the edges of the curves. It suggests that this can be done in the complex setting as well: these can be expressed as the integrals of some well-chosen meromorphic forms generalizing the notion of \textit{moment}. 

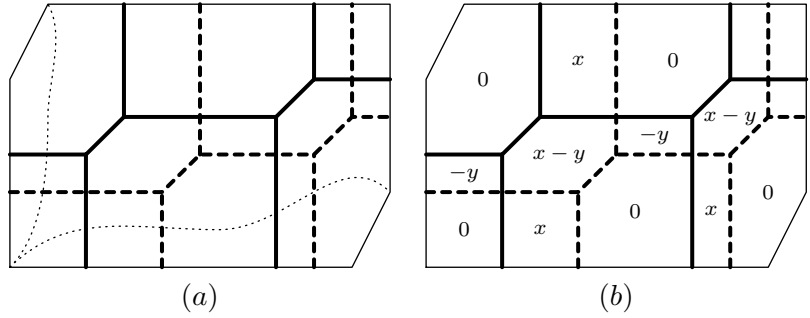
\begin{figure}
\begin{center}
\begin{tabular}{cc}
\begin{tikzpicture}[line cap=round,line join=round,>=triangle 45,x=0.5cm,y=0.5cm]
\draw [line width=0.5pt] (0,0)--++ (9,0)--++ (1,2)--++ (0,5)--++ (-9,0)--++ (-1,-2)--++ (0,-5);

\draw [line width=1.5pt,dashed] (0,2)--++ (4,0)--++ (1,1)--++ (3,0)--++ (1,1)--++ (1,0);
\draw [line width=1.5pt,dashed] (4,0)--++ (0,2);
\draw [line width=1.5pt,dashed] (8,0)--++ (0,3);
\draw [line width=1.5pt,dashed] (5,3)--++ (0,4);
\draw [line width=1.5pt,dashed] (9,4)--++ (0,3);

\draw [line width=1.5pt] (0,3)--++ (2,0)--++ (1,1)--++ (4,0)--++ (1,1)--++ (2,0);
\draw [line width=1.5pt] (2,0)--++ (0,3);
\draw [line width=1.5pt] (7,0)--++ (0,4);
\draw [line width=1.5pt] (3,4)--++ (0,3);
\draw [line width=1.5pt] (8,5)--++ (0,2);

\draw [line width=0.5pt,dotted,looseness=1] (0,0) to [in=180] (5.5,1) to [out=0] (10,2);
\draw [line width=0.5pt,dotted,looseness=0.8] (0,0) to [in=-90] (1,4) to [out=90, in=-60] (1,7);

\begin{scriptsize}

\end{scriptsize}
\end{tikzpicture} &
\begin{tikzpicture}[line cap=round,line join=round,>=triangle 45,x=0.5cm,y=0.5cm]
\draw [line width=0.5pt] (0,0)--++ (9,0)--++ (1,2)--++ (0,5)--++ (-9,0)--++ (-1,-2)--++ (0,-5);

\draw [line width=1.5pt,dashed] (0,2)--++ (4,0)--++ (1,1)--++ (3,0)--++ (1,1)--++ (1,0);
\draw [line width=1.5pt,dashed] (4,0)--++ (0,2);
\draw [line width=1.5pt,dashed] (8,0)--++ (0,3);
\draw [line width=1.5pt,dashed] (5,3)--++ (0,4);
\draw [line width=1.5pt,dashed] (9,4)--++ (0,3);

\draw [line width=1.5pt] (0,3)--++ (2,0)--++ (1,1)--++ (4,0)--++ (1,1)--++ (2,0);
\draw [line width=1.5pt] (2,0)--++ (0,3);
\draw [line width=1.5pt] (7,0)--++ (0,4);
\draw [line width=1.5pt] (3,4)--++ (0,3);
\draw [line width=1.5pt] (8,5)--++ (0,2);

\begin{scriptsize}
\draw (1,1) node {$0$};
\draw (3,1) node {$x$};
\draw (5.5,1.5) node {$0$};
\draw (7.5,1.5) node {$x$};
\draw (9,2) node {$0$};
\draw (1,2.5) node {$-y$};
\draw (3.5,3) node {$x-y$};
\draw (1.5,5) node {$0$};
\draw (4,5.5) node {$x$};
\draw (6.5,5.5) node {$0$};
\draw (6,3.5) node {$-y$};
\draw (8,4) node {$x-y$};
\end{scriptsize}
\end{tikzpicture} \\
$(a)$ & $(b)$ \\
\end{tabular}
\caption{\label{figure tropical curves linear system}Two tropical curves $\Gamma_0$ and $\Gamma_1$ of the same degree in a tropical abelian surface, with two paths generating $\Lambda$ on $(a)$, and with the slope of a function with corner locus $\Gamma_0\cup\Gamma_1$ on $(b)$.}
\end{center}
\end{figure}

	\begin{expl}
	In Figure \ref{figure tropical curves linear system} we can see two tropical curves of the same degree $\begin{pmatrix}
	1 & 0 \\
	0 & 2 \\
	\end{pmatrix}$, $\Gamma_0$ is dashed and $\Gamma_1$ is full. We also have depicted two paths that generate the homology of $\TT A$. We have a piecewise affine function $f$ whose corner locus is $\Gamma_0\cup\Gamma_1$: the function changes its slope convexly or concavely if it meets $\Gamma_1$ or $\Gamma_0$. Assuming the slope is $0$ at the bottom left corner of the hexagon, we get the slopes depicted on Figure \ref{figure tropical curves linear system} $(b)$. Theorem \ref{theorem relation moment linear system tropical} asserts that to recover the parameter from the quasi-periodicity of $f$, we only need to make the sum of the moment of the edges intersected by the dotted path on Figure \ref{figure tropical curves linear system} $(a)$.
	\end{expl}
	
	\begin{rem}
In the case of tropical curves  inside $N_\RR$, the tropical Menelaus theorem \cite{mikhalkin2017quantum} asserts that the sum of the moments of the unbounded ends is $0$. More generally, the moments of the edges intersecting a contractible loop is $0$ by balancing condition. In an abelian surface, there are non-contractible loops, and the balancing condition ensures that the sum of the moments, if defined, does not depend on the chosen representative for the loop. The linear system condition can be seen as imposing the sum of the moments for the loops inside $\TT A$.
\end{rem}

If one is given a tropical curve inside a torus, it is possible to deform cycles that it contains to get a different tropical curve of the same degree. Theorem \ref{theorem relation moment linear system tropical} asserts that to keep the deformation in the same linear system, it is only possible to deform cycles that are homologically trivial inside $H_1(\TT A,\ZZ)$, so that the sum of the moments remains constant.

	\subsubsection{General statement}
	
In the complex setting, the problem is that we cannot transpose verbatim the tropical proof since a complex curve does not have any edges, and a complex curve does not intersect a generic path for obvious dimensional reasons. The statement we are looking for is an analog of the following $1$-dimensional statement that relates a divisor on an elliptic curve to the element that it defines in its Picard group. We recall it with a proof, hoping the reader will then find the proof of the $2$-dimensional case more natural.

\begin{prop}
Let $\EEE=\CC^*/\langle \lambda\rangle$ be an elliptic curve and let $D=\sum_i n_i z_i$ be a degree $0$ divisor on $\EEE$, where $z_i$ are points in $\EEE$. The Picard group of $\EEE$ is isomorphic to $\EEE$, and we have $\O(D)=\prod z_i^{n_i}\in\EEE$.
\end{prop}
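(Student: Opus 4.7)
The plan is to identify the degree zero part of the Picard group of $\EEE$ with quasi-periodicity constants modulo $\lambda$, then exhibit an explicit section of $\O(D)$ whose quasi-periodicity factor is the desired product. First I would observe that pulling back any degree zero line bundle on $\EEE$ to $\CC^*$ trivialises it (the lattice $\langle\lambda\rangle$ is free of rank one), so its sections correspond to holomorphic functions $\theta\colon\CC^*\to\CC$ satisfying a quasi-periodicity relation $\theta(\lambda z)=K\,\theta(z)$ with $K\in\CC^*$. Replacing $\theta$ by $z^m\theta$ changes $K$ into $\lambda^m K$, so $K$ is well-defined only as an element of $\CC^*/\langle\lambda\rangle=\EEE$, which realises the isomorphism of the statement.

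Next, assuming $|\lambda|<1$ (otherwise swap the roles of $\lambda$ and $\lambda^{-1}$), I would introduce the Jacobi theta function
$$\vartheta(z)=\prod_{k\geq 0}(1-\lambda^k z)\prod_{k\geq 1}(1-\lambda^k z^{-1}),$$
whose zero locus on $\CC^*$ is exactly the orbit $\{\lambda^k:k\in\ZZ\}$ and which satisfies the easily verified functional equation $\vartheta(\lambda z)=-z^{-1}\vartheta(z)$. Choosing lifts $z_i\in\CC^*$ of the points in the support of $D$ and setting $\theta(z)=\prod_i\vartheta(z/z_i)^{n_i}$ produces a holomorphic function on $\CC^*$ whose divisor is the periodic lift of $D$; this is therefore a global section of $\O(D)$.

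The quasi-periodicity factor is then a direct computation:
$$\frac{\theta(\lambda z)}{\theta(z)}=\prod_i\left(-\frac{z_i}{z}\right)^{n_i}=(-1)^{\sum n_i}\,z^{-\sum n_i}\prod_i z_i^{n_i}=\prod_i z_i^{n_i},$$
using $\sum n_i=0$ to collapse both the sign and the monomial in $z$. This identifies $\O(D)$ with $\prod z_i^{n_i}\in\EEE$.

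The only subtlety, which I expect to be the main (minor) obstacle, is well-definedness: the representatives $z_i\in\CC^*$ of points of $\EEE$ are determined only up to multiplication by powers of $\lambda$, but the degree zero condition $\sum n_i=0$ makes the product $\prod z_i^{n_i}$ invariant in $\CC^*/\langle\lambda\rangle$. This is exactly the one-dimensional shadow of the compatibility that will appear in the two-dimensional analog (Theorem \ref{theorem relation moment linear system complex}), where the role of the degree zero condition is played by the balancing of moments along the three-cycles $X_\lambda$.
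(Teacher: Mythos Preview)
Your argument is correct and pleasantly direct: you build an explicit meromorphic section of $\O(D)$ out of Jacobi theta products and read off the quasi-periodicity constant by hand. The functional equation $\vartheta(\lambda z)=-z^{-1}\vartheta(z)$ and the cancellation of $(-1)^{\sum n_i}z^{-\sum n_i}$ via $\deg D=0$ are exactly right, and your remark on well-definedness handles the ambiguity in the lifts $z_i$.

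The paper, however, proves the same statement by an entirely different and deliberately more elaborate route. Rather than exhibiting an explicit $\theta$-section, it assumes only the existence of some meromorphic $\sigma$ with $\sigma(\lambda z)=K\sigma(z)$ and divisor $D$, and then recovers $K$ by integrating the ill-defined form $\frac{1}{2i\pi}\log\sigma\,\frac{\dd z}{z}$ over the boundary of a fundamental annulus with small disks around the $z_i$ removed. A cellular decomposition and Stokes' formula produce a relation whose boundary terms yield $\log K$ (from the two circles $|z|=a$ and $|z|=|\lambda|a$) and the monodromy contributions $n_i\log z_i$ (from the small circles around the $z_i$), modulo $2i\pi$ and $\log\lambda$.

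What each approach buys: your argument is shorter and self-contained, but it relies on having a closed-form infinite product for a theta function with prescribed simple zero at a single point, which is special to dimension one. The paper's contour-integration argument is chosen precisely because it is the template for the proof of Theorem~\ref{theorem relation moment linear system complex}: in the surface case one cannot write down an explicit $\theta$-function with zero locus a prescribed curve, but one can still integrate $\frac{1}{(2i\pi)^2}\log\sigma\,\dzdw$ over a three-dimensional cobordism between fibre tori, and the same Stokes-plus-monodromy bookkeeping then produces the moments of the circles $X_\lambda\cap\CCC_i$. So while your proof establishes the proposition, it does not prepare the ground for the two-dimensional statement in the way the paper intends.
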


\begin{proof}
We consider an annulus $X=\{a\leqslant |z| <|\lambda|a\}$ that is a fundamental domain in $\CC^*$ for the quotient $\CC^*/\langle\lambda\rangle$. By abuse of notation, we also denote by $z_i$ the preimages in $\CC^*$ of the points $z_i\in\EEE$ that belong to the fundamental domain. Let $\sigma$ be a meromorphic function obtained as a quotient of the $\theta$-functions associated with the positive and negative part of $D$, as explained above in the $2$-dimensional setting. The function $\sigma$ satisfies the relation $\sigma(\lambda z)=K\sigma(z)$ for some $K\in\CC^*$, and we want to recover the constant $K\in\CC^*$. To do this, we consider the holomorphic form $\varphi=\frac{1}{2i\pi}\log\sigma(z)\frac{\dd z}{z}$ on $\CC^*\backslash \{\lambda^n z_i\}_{n,i}$. This $1$-form is not correctly defined since we lack a logarithm for $\sigma$. However, we can define it on any simply connected domain by taking such a logarithm.

Let $X_\varepsilon$ be the fundamental domain where we have removed small disks around the points $z_i$: $X_\varepsilon=X\backslash\bigcup_i D(z_i,\varepsilon)$. It has two boundary components coming from $X$, and one component $\partial D(z_i,\varepsilon)$ per point in the support of $D$. We now choose a cellular decomposition of $X_\varepsilon$ whose vertices on the boundary components are precisely $a$, $\lambda a$ and $z_i+\varepsilon$. For each face $F$, we can define a holomorphic $1$-form $\varphi_F=\frac{1}{2i\pi}\log\sigma(z)\frac{\dd z}{z}$ by choosing a logarithm for $\sigma$. This form is well-defined up to addition of $\frac{\dd z}{z}$. Using Stoke's formula, we have for each face
$$\int_{\partial F}\varphi_F=\int_F \dd\varphi_F=0,$$
since $\varphi_F$ is closed. Furthermore, each edge $E$ (with a chosen orientation) not in $\partial X_\varepsilon$ is on the boundary of two faces $F_+$ and $F_-$. Thus, there exists an integer $k_E\in\ZZ$ such that on $E$, $\varphi_{F_+}-\varphi_{F_-}=k_E\frac{\dd z}{z}$ since the two logarithms for $\sigma$ differ by an element of $2i\pi\ZZ$. We add the contributions for all the faces and regroup the integrals over each edge according to whether they lie on the boundary of $X_\varepsilon$ or not:
\begin{align*}
0=\sum_{F}\int_{\partial F}\varphi_F & =\sum_{E\subset\partial X_\varepsilon}\int_E \varphi_F + \sum_{E\nsubseteq \partial X_\varepsilon} k_E\int_E\frac{\dd z}{z}. \\
\end{align*}
Now, we observe the following facts:
\begin{itemize}[label=$\circ$]
\item First, for the two boundary components of $X_\varepsilon$ coming from $X$, one is the image of the other by multiplication by $\lambda$, but with opposite orientations as boundary of $X$. Moreover, as $\sigma(\lambda z)=K\sigma(z)$, the two integrals cancel each other up to the term $\frac{1}{2i\pi}\int_E \log K\frac{\dd z}{z}=\log K$, modulo $2i\pi\ZZ$ if the logarithms for $\sigma$ do not coincide.
\item The integrals $\int_{\partial D(z_i,\varepsilon)}\varphi_F$ over the other boundary components go to $0$ when $\varepsilon$ goes to $0$: as $\sigma$ has a zero or pole at each $z_i$, $\log\sigma(z)$ has a singularity whose modulus is in $O(\log\varepsilon)$, while the circle $\partial D(z_i,\varepsilon)$ has size $2\pi\varepsilon$. Hence, the integral goes to $0$.
\item It remains the contribution of the integrals $\int_E \frac{\dd z}{z}$. If we work modulo $2i\pi$, each oriented edge $E$ relates some points $z_E^-$ and $z_E^+$ of the decomposition. Thus, we have
$$\int_E \frac{\dd z}{z}\equiv \log z_E^+ -\log z_E^- \ \mathrm{mod}\ 2i\pi,$$
and
$$\sum_{E\nsubseteq \partial X_\varepsilon} k_E\int_E\frac{\dd z}{z}=\sum_{z}\left(\sum_{E\ni z}k_E\right)\log z \ \mathrm{mod}\ 2i\pi,$$
where the sum is over the vertices of the triangulation.
\begin{itemize}[label=-]
\item If such a point $z$ is not on the boundary, then we can find locally a logarithm for $\sigma$, and it implies that $\sum_{E\ni z}k_E=0$.
\item If $z=z_i+\varepsilon$ is the unique point lying on the boundary component $\partial D(z_i,\varepsilon)$, as $\sigma(z)$ has a pole of order $n_i$, the monodromy is $2i\pi n_i$, and we get that $\sum_{E\ni z}k_E=n_i$.
\item Finally, if $z=a$ is the point lying on the inner boundary component of $\partial X$, the other point being $\lambda a$. Let $k=\sum_{E\ni z}k_E$ and $k'=\sum_{E\ni \lambda z}k_E$. We have that $k+k'=0$ since we can find a logarithm for $\sigma$ locally at $a$ and transport it by $\lambda$ to get one at $\lambda a$. Thus, $k\log a+k'\log(\lambda a)=k'\log\lambda$.
\end{itemize}
\end{itemize}
As we have computed the integrals mod $2i\pi$, we get a relation mod $2i\pi$. The last point also tells to view it mod $\log\lambda$. Taking into account the above remarks, and the exponential of the relation modulo $2i\pi$, we get that
$$K=\prod_i z_i^{n_i} \ \mathrm{mod}\ \lambda,$$
which is the desired relation.
\end{proof}

The statement in the $2$-dimensional case is presented below. When necessary, let $z$ and $w$ be two coordinates functions on the complex torus $N_{\CC^*}$. The theorem relates the element defined in the Picard group by the divisor $\CCC_1-\CCC_0$ to some integrals over both curves. These integrals are an analog to the tropical notion of \textit{moment of an edge}. Let us consider a circle $\gamma$ on a curve $\CCC\subset N_{\CC^*}$. It realizes some class $n_\gamma\in N$. Thus, the monomial $\chi^{m_\gamma}$ corresponding to $m_\gamma=\det(n_\gamma,-)\in M$ admits a logarithm in a neighborhood of $\gamma$. There is a unique $m'_\gamma$ completing $m_\gamma$ into an oriented basis of $M$.

\begin{defi}
With the above notations, we define \textit{moment} of the circle is the scalar
$$\frac{1}{2i\pi}\int_{\gamma}\log\chi^{m_\gamma}\frac{\dd\chi^{m'_\gamma}}{\chi^{m'_\gamma}}.$$
\end{defi}

The $1$-forms that one integrates are not closed but they are holomorphic. The holomorphic part implies that two circles realizing the same homotopy class inside $\CCC$ have the same moment, since they bound a holomorphic curve on which the derivative of the $1$-form restricts to $0$. This derivative is $\frac{\dd\chi^m}{\chi^m}\wedge\frac{\dd\chi^{m'}}{\chi^{m'}}=\dzdw$, which is a holomorphic $2$-form on $N_{\CC^*}$. Furthermore, as it restricts to $0$ on any $\CCC$, the sum of the moments over cycles whose sum is homologically trivial inside $\CCC$ is also $0$.

For what follows, let $T_{x,y}=\mathrm{Log}^{-1}(x,y)$ be the tori of elements in $N_{\CC^*}$ having a fixed projection under $\mathrm{Log}:N_{\CC^*}\to N_\RR$. Let $(x_0,y_0)\in N_\RR$ be a fixed point such that $T_{x_0,y_0}$ does not intersect $\CCC_0$ or $\CCC_1$.

\begin{theo}\label{theorem relation moment linear system complex}
Using the above notations, for $\lambda\in\Lambda$, let $X$ be a cobordism between $T_{x_0,y_0}$ and $\lambda T_{x_0,y_0}$, it intersects $\CCC_0$ and $\CCC_1$ along circles $\gamma_i$, let $\mu_i$ be the moment of the circle $\gamma_i$ oriented such that $i$ times the tangent direction completes the tangent space to $X$ into an oriented basis of $N_{\CC^*}$. Then $K$ maps $\lambda$ to $e^{\sum\varepsilon_i\mu_i}$, where $\varepsilon_i=\pm 1$ according to whether $\gamma_i$ is on $\CCC_0$ or $\CCC_1$.
\end{theo}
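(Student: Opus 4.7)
The plan is to transpose the one-dimensional argument just given to two complex dimensions, replacing the $1$-form $\tfrac{1}{2i\pi}\log\sigma\,\tfrac{\dd z}{z}$ by the $(2,0)$-form $\Psi=\log\sigma\cdot\omega$ with $\omega=\tfrac{1}{(2i\pi)^2}\tfrac{\dd z}{z}\wedge\tfrac{\dd w}{w}$, and Stokes on an annulus by Stokes on the three-dimensional cobordism $X$. First I set $\sigma=\theta_1/\theta_0$, a meromorphic function satisfying $\sigma(\lambda\cdot z)=K_\lambda\sigma(z)$, and I pick $X$ (for instance $\mathrm{Log}^{-1}$ of a generic segment from $(x_0,y_0)$ to $(x_0,y_0)+\mathrm{Log}\,\lambda$) transverse to $\CCC_0\cup\CCC_1$, so that $X\cap(\CCC_0\cup\CCC_1)=\bigsqcup_i\gamma_i$.

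On $X_\varepsilon=X\setminus\bigsqcup_iN_\varepsilon(\gamma_i)$ I triangulate into simply-connected $3$-cells and fix a branch of $\log\sigma$ on each. Since $d\Psi=\tfrac{\dd\sigma}{\sigma}\wedge\omega$ is a $(3,0)$-form in $2$ complex dimensions, it vanishes, so applying Stokes cell-by-cell and summing gives
\[
0=\int_{\partial X_\varepsilon}\Psi+2i\pi\sum_Fk_F\int_F\omega,
\]
where $k_F\in\ZZ$ is the jump of $\log\sigma$ across an internal $2$-face $F$. The translation invariance of $\omega$ (with $\int_{T_{x_0,y_0}}\omega=1$) together with the quasi-periodicity of $\sigma$ gives $\int_{\lambda T}\Psi-\int_T\Psi=\log K_\lambda$ modulo $2i\pi\ZZ$. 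In local coordinates where $\CCC_j=\{v=0\}$ and $\sigma=v^{\varepsilon_j}h$, the integrand on the small torus $\{|v|=\varepsilon\}$ is only logarithmic while the tube has size $O(\varepsilon)$, so $\int_{\partial N_\varepsilon(\gamma_i)}\Psi=o(1)$ as $\varepsilon\to 0$.

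The hard part, analogous to the one-dimensional telescoping identity $\sum_{E\ni z_i}k_E=n_i$, is to identify the internal face sum with the combination of moments. The cocycle $(k_F)$ is the Cech representative of $\tfrac{1}{2i\pi}\big[\tfrac{\dd\sigma}{\sigma}\big]\in H^1(X_\varepsilon,\ZZ)$, whose only non-trivial periods are the residues $\varepsilon_i$ on the meridians of the $\gamma_i$, the periods on $H_1(T)$-classes being adjustable to zero by shifting $\sigma$ by a monomial, which is exactly the ambiguity of $K_\lambda$ in $\Lambda^*_{\CC^*}/M$. By Poincar\'e--Lefschetz duality, the $2$-chain $C=\sum_Fk_F[F]$ is then represented by $\sum_i\varepsilon_iC_i$, where $C_i$ is a Seifert chain in $X$ with $\partial C_i=\gamma_i$ oriented as prescribed in the statement. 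For each $i$, the $1$-form $\alpha_i=\tfrac{1}{2i\pi}\log\chi^{m_{\gamma_i}}\cdot\tfrac{\dd\chi^{m'_{\gamma_i}}}{\chi^{m'_{\gamma_i}}}$ is a primitive of $2i\pi\omega$ on a simply-connected open set meeting $\gamma_i$ (a change of the completion $m'_{\gamma_i}$ modifies $\alpha_i$ by an exact form, and hence does not affect the period), and Stokes gives $\int_{C_i}\omega=\tfrac{1}{2i\pi}\int_{\gamma_i}\alpha_i=\tfrac{1}{2i\pi}\mu_i$. Combining, $\log K_\lambda\equiv\sum_i\varepsilon_i\mu_i$ in $\Lambda^*_{\CC^*}/M$, which is the claim.
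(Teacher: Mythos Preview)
Your argument tracks the paper's proof closely through the setup: define $\sigma=\theta_1/\theta_0$, take the $2$-form $\Psi=\log\sigma\cdot\omega$, excise tubes around the $\gamma_i$ to get $X_\varepsilon$, triangulate into simply-connected $3$-cells, and apply Stokes cell-by-cell. The identification of the two end-tori with $\log K_\lambda$ and the $O(\varepsilon\log\varepsilon)$ vanishing on the tube boundaries are exactly as in the paper.

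The divergence is in how you extract the moments from the internal face sum $\sum_F k_F\int_F\omega$. The paper applies Stokes a \emph{second} time, face by face, using the primitives $\log\chi^m\,\tfrac{\dd\chi^{m'}}{\chi^{m'}}$ of $\dzdw$, and then does an explicit edge-by-edge case analysis: internal edges contribute zero (local $\log\sigma$ exists), meridian edges on the tubes contribute zero (both $\log z,\log w$ exist there), and only the longitudinal edges on $\partial N_\varepsilon(\gamma_i)$ survive, giving precisely the moments $\mu_i$. Your Poincar\'e--Lefschetz shortcut is conceptually appealing but has a genuine gap: equality of classes in $H_2(X_\varepsilon,\partial X_\varepsilon;\ZZ)$ does \emph{not} imply equality of $\int\omega$, because $\omega$ does not vanish on $\partial X_\varepsilon$ (indeed $\int_T\omega=1$, and $\omega$ is nonzero on the tubes). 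The difference $C-\sum_i\varepsilon_iC_i$ is $\partial D+B$ with $B\subset\partial X_\varepsilon$, and you have not shown that $\int_B\omega$ lands in the allowed ambiguity. A second gap: your Stokes step $\int_{C_i}\omega=\tfrac{1}{2i\pi}\int_{\gamma_i}\alpha_i$ needs a single-valued branch of $\alpha_i$ on all of $C_i$, but $\log\chi^{m_{\gamma_i}}$ has nontrivial monodromy around the torus directions transverse to $n_{\gamma_i}$, and you have not arranged $C_i$ to avoid this. Filling these in requires exactly the cell-structure control on the tube boundaries (one meridian edge, one longitudinal edge) and the second-Stokes bookkeeping that the paper carries out.
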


\begin{rem}
The abelian surface $\CC A=N_{\CC^*}/\Lambda$ can be seen as a torus fibration over the tropical torus $N_\RR/\log|\Lambda|$ with fiber $N\otimes\RR/\ZZ$ taking the modulus coordinate by coordinate. The tori $T_{x,y}$ are fibers of this fibration, and one obtains a cobordism between two fibers by taking the preimage of a path in the base.
\end{rem}

\begin{proof}
We adapt the proof to the $2$-dimensional setting. Let $\sigma$ be the meromorphic function on $N_{\CC^*}$ that has poles and zeros along $\CCC_0$ and $\CCC_1$, obtained by quotienting suitable $\theta$-functions $\theta_0$ and $\theta_1$. For the reader's convenience, here is a correspondence between the objects used in the proof and the $1$-dimensional case:
\begin{itemize}[label=-]
\item The fundamental domain $X$ becomes now a cobordism between the two real tori $T_{x_0,y_0}=\mathrm{Log}^{-1}(x_0,y_0)$ and $\lambda T_{x_0,y_0}$, where $\mathrm{Log}:N_{\CC^*}\to N_\RR$ is the logarithm coordinates by coordinates. We can assume that neither of the two tori intersects $\CCC_0$ or $\CCC_1$. The cobordism $X$ is a $3$-dimensional manifold with boundary and not a fundamental domain anymore.
\item We use the not really defined $2$-form $\varphi=\frac{1}{(2i\pi)^2}\log\sigma(z,w)\frac{\dd z}{z}\wedge\frac{\dd w}{w}$. As before, this form is not defined because of the lack of global logarithm for $\sigma$, but it is correctly defined up to $2i\pi\ZZ$ on simply connected domains. Notice that if $\chi^m=z^aw^b$ and $\chi^{m'}=z^cw^d$ are two monomials such that $(m,m')$ is an oriented basis of $M$, then $\frac{\dd\chi^m}{\chi^m}\wedge\frac{\dd\chi^{m'}}{\chi^{m'}}=\dzdw$, and we can thus choose any basis of monomials. Moreover, $\log\chi^m\frac{\dd \chi^{m'}}{\chi^{m'}}$ is a primitive of the $2$-form on any domain where we can define a logarithm for $\chi^m$, and we have
$$\log\chi^m\frac{\dd \chi^{m'}}{\chi^{m'}}=\log z\frac{\dd w}{w}.$$
\item To get a simply connected domain, we first get rid of the points where it is not defined: $X\cap\CCC_0$ and $X\cap\CCC_1$. If $X$ is chosen generically, it is transversal to $\CCC_0$ and $\CCC_1$, and thus their intersection are $1$-dimensional varieties without boundary: a disjoint union of circles $\gamma_i$. Each of them realizes a class $n_i\in N$. We then consider $X_\varepsilon$ which is $X$ where we have removed a small $\varepsilon$-tubular neighborhood $U(\gamma_i,\varepsilon)$ of each $\gamma_i$. Finally, we take a cellular subdivision of $X_\varepsilon$ such that for each torus on the boundary of $X_\varepsilon$, the decomposition has $1$ vertex, $2$ edges and $1$ face. Furthermore, we assume that for the boundary components $\partial U(\gamma_i,\varepsilon)$, one of the edges realizes a loop contractible inside $N_{\CC^*}$.  This means one of the loops is in the ``small" direction of the real torus $\partial U(\gamma_i,\varepsilon)$. The picture to have in mind is $(S^1)^2\times [0;1]$, with some circles not meeting the boundary, and we remove small tubular neighborhoods of the circle, and choose a cellular decomposition.
\end{itemize}

For each maximal cell $V$ in the decomposition, we can define a holomorphic  $2$-form $\varphi_V=\frac{1}{(2i\pi)^2}\log\sigma\frac{\dd z}{z}\wedge\frac{\dd w}{w}$ by choosing a logarithm for $\sigma$. By Stoke's formula, we have that
$$\int_{\partial V}\varphi_V=\int_V \dd \varphi_V=0.$$
If $F$ is an oriented face that belongs to two differents maximal cells $V_+$ and $V_-$, we have that
$$\varphi_{V_+}-\varphi_{V_-}=\frac{1}{2i\pi} k_F\frac{\dd z}{z}\wedge\frac{\dd w}{w},$$
for some integer $k_F$. Adding all the contributions, we get that
$$0=\sum_{V}\int_{\partial V}\varphi_V=\sum_{F\subset\partial X_\varepsilon}\int_F \varphi_V + \sum_{F\nsubseteq\partial X_\varepsilon}\frac{1}{2i\pi} k_F\int_F \dzdw.$$
Now, we simplify the relation:
\begin{itemize}[label=$\circ$]
\item First, for the boundary components $T_{x_0,y_0}$ and $\lambda T_{x_0,y_0}$, the two logarithms for $\sigma$ differ by $\frac{1}{(2i\pi)^2}\int_{T_{x_0,y_0}}\log K_\lambda\dzdw=\log K_\lambda$.
\item Then, for the other boundary components, $\log\sigma$ has still a singularity whose modulus is in $O(\log\varepsilon)$, and thus the integral vanishes when $\varepsilon$ goes to $0$ since the torus has an area in $O(\varepsilon)$.
\item For the $\int_F \dzdw$, we relate them to integrals over the boundaries of the faces. It is indeed possible to find primitives for the $2$-form $\dzdw$: each $\log\chi^m\frac{\dd \chi^{m'}}{\chi^{m'}}$ is a solution whenever defined. We have
$$\int_F \dzdw=\int_{\partial F}\log\chi^m\frac{\dd \chi^{m'}}{\chi^{m'}}.$$
The logarithm is correctly defined up to an addition of $2i\pi$. Then,
\begin{align*}
\sum_{F\nsubseteq\partial X_\varepsilon} \frac{1}{2i\pi} k_F \int_F\dzdw= & \sum_{E\subset\partial X_\varepsilon} \frac{1}{2i\pi}\left(\sum_{F\supset E}k_F \right)\int_E \log\chi^m\frac{\dd \chi^{m'}}{\chi^{m'}} \\
& + \sum_{E\nsubseteq\partial X_\varepsilon} \frac{1}{2i\pi}\left(\sum_{F\supset E}k_F \right)\int_E \log\chi^m\frac{\dd \chi^{m'}}{\chi^{m'}}.\\
\end{align*}
For edges not contained in the boundary, as we can find a local logarithm for $\sigma$, we get that $\sum_{F\supset E}k_F=0$ as in the planar case. Concerning the edges on the boundary: the edges not going around the circles have a well-defined $\log z$, and $\log w$ as well, so the integral is $0$. Meanwhile, the integrals going along the circles contribute $\int_{X\cap\CCC_i}\log\chi^m\frac{\dd \chi^{m'}}{\chi^{m'}}$, which is precisely their moment. Last, the edges on $T_{x_0,y_0}$ and $\lambda T_{x_0,y_0}$ cancel their contributions as for edges not on the boundary.
\end{itemize}
\end{proof}

\begin{rem}
To some extent, we consider the integral
$$\frac{1}{(2i\pi)^2}\int_{T_{x,y}}\log\sigma(z,w)\dzdw.$$
It is a version with argument of the Ronkin function, where $\sigma$ would be replaced by its modulus $|\sigma|$, wo that there are no problem with the logarithm anymore. The cobordism $X_\varepsilon$ between $T_{x_0,y_0}$, $\lambda T_{x_0,y_0}$ and the $\partial U(\gamma_i,\varepsilon)$ gives a relation etween the integrals over these tori. Moreover, as $\sigma(\lambda\cdot(z,w))=K_\lambda\sigma(z,w)$, the integrals over the first two boundary components contribute for $\log K_\lambda$. However, as $\varphi$ is not globally defined, we have to restrict to simply connected domains and add up the Stoke's relations on each of them.
\end{rem}

The upshot of the preceding theorem is that it allows to express the fact that a curve belongs to a fixed linear system purely in terms of the curve. This is done by considering certain integrals on the curves, and it has a nice expression close to the tropical limit by imposing a condition on the phases of the edges of the curves.

\section{Enumerative problem and counts}

\subsection{Enumerative problem}

\subsubsection{Problem and setting.} We have seen that the moduli space of genus $g$ curves in a class $C\in\Lambda\otimes N$ is of dimension $g$. In the first paper, we studied the enumerative problems of counting curves passing through $g$ points. We now focus on the enumerative problem where we impose the line bundle as well.

\begin{rem}
In \cite{bryan1999generating}, J. Bryan and N. Leung prove a result that transform conditions imposed on the line bundle defined by the curve into conditions on the curve itself. More precisely, they consider the additional conditions imposed by $1$-cycles inside $H_1(\CC A,\ZZ)$. The fixed linear system condition is equivalent to meeting four $1$-cycles whose classes form a basis of $H_1(\CC A,\ZZ)$. Here, we prefer to adopt a different point of view by recovering the line bundle from the curve using Theorem \ref{theorem relation moment linear system complex}.
\end{rem}

Similarly to the case of genus $g$ curves passing through $g$ points, there might be reducible solutions if the class $C$ decomposes as a sum $C=C_1+C_2$, but it is possible to deform the irreducible components separately. Thus, we only care about irreducible solutions.

\medskip

Let $h:\Gamma_0\to\TT A$ be a fixed genus $2$ curve realizing the class $C$ and $\P\subset\TT A$ be a configuration of $g-2$ points chosen generically. We look for parametrized tropical curves $h:\Gamma\to\TT A$ of genus $g$ in the class $C$ passing through $\P$ and such that the divisor $\Gamma-\Gamma_0$ realizes a fixed element inside $\mathrm{Pic} A=\Lambda^*_\RR/M$.

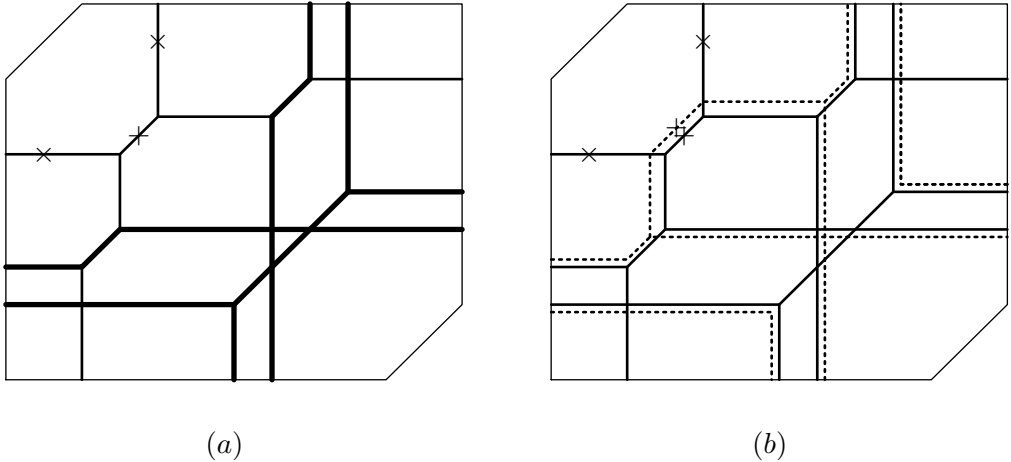
\begin{figure}
\begin{center}
\begin{tabular}{cc}
\begin{tikzpicture}[line cap=round,line join=round,>=triangle 45,x=0.5cm,y=0.5cm]
\clip(-1,-1) rectangle (12.5,10.5);
\draw [line width=0.5pt] (0,0)--++ (10,0)--++ (2,2)--++ (0,8)--++ (-10,0)--++ (-2,-2)--++ (0,-8);

\draw [line width=2pt] (0,3)--++ (2,0)--++ (1,1)--++ (2,0)--++ (7,0);
\draw [line width=2pt] (0,2)--++ (6,0)--++ (3,3)--++ (3,0);
\draw [line width=1pt] (0,6)--++ (3,0)--++ (1,1)--++ (3,0);
\draw [line width=1pt] (8,8)--++ (4,0);
\draw [line width=2pt] (6,0)--++ (0,2);
\draw [line width=2pt] (7,0)--++ (0,7);
\draw [line width=2pt] (8,8)--++ (0,2);
\draw [line width=1pt] (4,7)--++ (0,3);
\draw [line width=2pt] (7,7)--++ (1,1);
\draw [line width=2pt] (9,5)--++ (0,5);
\draw [line width=1pt] (2,0)--++ (0,3);
\draw [line width=1pt] (3,4)--++ (0,2);

\draw (1,6) node {$\times$};
\draw (4,9) node {$\times$};
\draw (3.5,6.5) node {$+$};

\end{tikzpicture} &
\begin{tikzpicture}[line cap=round,line join=round,>=triangle 45,x=0.5cm,y=0.5cm]
\clip(-1,-1) rectangle (12.5,10.5);
\draw [line width=0.5pt] (0,0)--++ (10,0)--++ (2,2)--++ (0,8)--++ (-10,0)--++ (-2,-2)--++ (0,-8);

\draw [line width=1pt] (0,3)--++ (2,0)--++ (1,1)--++ (2,0)--++ (7,0);
\draw [line width=1pt] (0,2)--++ (6,0)--++ (3,3)--++ (3,0);
\draw [line width=1pt] (0,6)--++ (3,0)--++ (1,1)--++ (3,0);
\draw [line width=1pt] (8,8)--++ (4,0);
\draw [line width=1pt] (6,0)--++ (0,2);
\draw [line width=1pt] (7,0)--++ (0,7);
\draw [line width=1pt] (8,8)--++ (0,2);
\draw [line width=1pt] (4,7)--++ (0,3);
\draw [line width=1pt] (7,7)--++ (1,1);
\draw [line width=1pt] (9,5)--++ (0,5);
\draw [line width=1pt] (2,0)--++ (0,3);
\draw [line width=1pt] (3,4)--++ (0,2);

\draw (1,6) node {$\times$};
\draw (4,9) node {$\times$};
\draw (3.5,6.5) node {$+$};

\draw (3.3,6.7) node {$+$};

\draw [line width=1pt,dotted] (0,1.8)--++ (5.8,0)--++ (0,-1.8);
\draw [line width=1pt,dotted] (0,3.2)--++ (2,0) --++ (0.6,0.6)-- (12,3.8);
\draw [line width=1pt,dotted] (2.6,3.8)-- (2.6,6)--++ (1.4,1.4)--++ (3.2,0)--++(0.6,0.6)--++(0,2);
\draw [line width=1pt,dotted] (12,5.2)--++ (-2.8,0)--++ (0,4.8);
\draw [line width=1pt,dotted] (12,5.2)--++ (-2.8,0)--++ (0,4.8);
\draw [line width=1pt,dotted] (7.2,0)--++ (0,7.4);

\end{tikzpicture} \\
$(a)$ & $(b)$ \\
\end{tabular}
\caption{\label{figure curve point and linear system} A genus $5$ tropical curve passing through $3$ points and belonging to a fixed linear system. On $(a)$, the complement of the marked points retracts on the bold part, and on $(b)$, a small deformation in the same linear system obtained by slightly moving one of the marked points.}
\end{center}
\end{figure}

\begin{expl}
On Figure \ref{figure curve point and linear system} we have drawn a genus $5$ curve of degree $\begin{pmatrix}
3 & 0 \\
0 & 3 \\
\end{pmatrix}$ passing through $3$ points. The complement of the marked points retracts on a subgraph of genus $2$ which is in bold on $(a)$. In the case of curves passing through $g$ points, the complement of marked points is without cycle, but not here.

\smallskip

On $(b)$ we draw a small deformation of the curve in the same linear system obtained by slightly moving one of the marked point. In the case of curves passing through $g$ points, the deformation is recovered by deforming the unique loop that contains the moving marked point and not the other. In this case, the linear system conditions expressed by Theorem \ref{theorem relation moment linear system tropical} only allows the deformation of cycles which are trivial inside $H_1(\TT A,\ZZ)$. As there is a three dimensional space of cycles containing the moving marked point, we get a unique cycle trivial in $H_1(\TT A,\ZZ)$, which might not be a circle, and we can deform it, as depicted on $(b)$.
\end{expl}

In the case of curves passing through $g$ points, we know that the complement of the marked points is a tree. Here, we have the following proposition.

\begin{prop}\label{proposition form of solutions}
Let $h:\Gamma\to\TT A$ be a curve of genus $g$ in the class $C$ passing through $\P$ and such that $\Gamma$ is linearly equivalent to $\Gamma_0$. Then:
\begin{itemize}[label=-]
\item the curve $\Gamma$ is simple and rigid,
\item the complement $\Gamma\backslash\P$ is connected, of genus $2$, and the map $H_1(\Gamma\backslash\P,\ZZ)\to H_1(\TT A,\ZZ)$ is injective.
\end{itemize}
\end{prop}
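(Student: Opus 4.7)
The plan is to establish all three claims by combining a dimension count for the moduli of parametrized tropical curves with a deformation argument based on Theorem~\ref{theorem relation moment linear system tropical}.

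\emph{Step 1 (simplicity and rigidity).} The moduli space of parametrized genus $g$ tropical curves in the class $C$ is a polyhedral complex whose top-dimensional strata, of dimension $g$, correspond to simple combinatorial types; non-simple types have strictly smaller moduli. The condition of passing through $\P$ is of codimension $g-2$ (codimension $1$ per point), and by Theorem~\ref{theorem relation moment linear system tropical} the fixed-linear-system condition is of codimension $2$, totaling codimension $g$. For generic $\P$, solutions live in the top-dimensional stratum and form a discrete set, forcing $\Gamma$ to be simple and rigid.

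\emph{Step 2 (injectivity).} For generic $\P$ the $g-2$ marked points lie in the interiors of $g-2$ distinct edges $e_1,\ldots,e_{g-2}$ of $\Gamma$. Let $\Sigma'$ denote the subgraph $\Gamma\backslash\{e_1^\circ,\ldots,e_{g-2}^\circ\}$ obtained by deleting the open interiors of these edges. The complement $\Gamma\backslash\P$ deformation retracts onto $\Sigma'$ (each dangling half-edge retracts to its vertex), so $H_1(\Gamma\backslash\P,\ZZ)\simeq H_1(\Sigma',\ZZ)$. Assume for contradiction that some nonzero $\gamma\in H_1(\Sigma',\ZZ)$ has trivial image in $H_1(\TT A,\ZZ)\simeq\Lambda$. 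Since $\gamma$ is trivial in $\Lambda$, sliding $\gamma$---a length perturbation along the edges of $\gamma$ preserving every cycle-closure condition---yields a nonzero tangent vector $\delta_\gamma$ to the moduli. By Theorem~\ref{theorem relation moment linear system tropical}, a trivial cycle contributes zero to the moment integrals along any element of $\Lambda$, so $\delta_\gamma$ preserves the linear system; moreover, since $\gamma\subset\Sigma'$, no edge of $\gamma$ carries a marked point, so $\delta_\gamma$ also preserves the point conditions. This produces a nontrivial deformation of $h$ preserving all the constraints, contradicting rigidity.

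\emph{Step 3 (connectedness and genus $2$).} An Euler characteristic computation gives $\chi(\Sigma')=(V(\Gamma)-E(\Gamma))+(g-2)=(1-g)+(g-2)=-1$, so the number of connected components $c$ and the first Betti number $b_1$ of $\Sigma'$ satisfy $b_1=c+1$. The injectivity from Step~2 yields $b_1\leq \mathrm{rk}\,H_1(\TT A,\ZZ)=2$, which forces $c=1$ and $b_1=2$.

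The main technical hurdle lies in Step~2: constructing the slide $\delta_\gamma$ precisely and checking that it is a valid tangent vector to the moduli, i.e., that length perturbations along $\gamma$ can be made consistently with the closure conditions of all cycles sharing edges with $\gamma$. Granting this, the remainder combines cleanly the dimension count, the moment formula of Theorem~\ref{theorem relation moment linear system tropical}, and the Euler characteristic argument.
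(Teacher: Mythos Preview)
Your argument follows the paper's own proof essentially step for step: a dimension count forces simplicity and rigidity, the existence of a homologically trivial cycle in $\Gamma\backslash\P$ would yield a one-parameter deformation preserving both the point and linear-system constraints (contradicting rigidity), and the Euler-characteristic computation $\chi(\Gamma\backslash\P)=-1$ together with $b_1\leq 2$ pins down connectedness and genus. The only real difference is that you make explicit the role of Theorem~\ref{theorem relation moment linear system tropical} in Step~2 and honestly flag the construction of the deformation $\delta_\gamma$ as the technical point, whereas the paper simply asserts that ``it would be possible to deform it'' --- so your write-up is, if anything, slightly more transparent about where the work lies.
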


\begin{proof}
The first statement comes from dimensional considerations proven in the first paper. If a tropical curve is not simple, either $h$ is an immersion and $\Gamma$ is not trivalent, or $h$ is not an immersion, and then it is possible to reparametrize $h(\Gamma)$ by a new map $h':\Gamma'\to\TT A$ such that $h'$ is an immersion, but $\Gamma'$ is of smaller genus or has vertices that are not trivalent. In either case, the dimension of the deformation space of $\Gamma'$ is strictly smaller than $g$, and its combinatorial type does not provide any solution if the constraints are chosen generically.

The rigidity comes from the fact that for generic constraints, there is a finite number of solutions: if there was a $1$-parameter family of solutions, we would get at least one non simple tropical curve solution to the problem, contradicting the genericity of the constraints.

The map $H_1(\Gamma\backslash\P,\ZZ)\to H_1(\TT A,\ZZ)$ is injective: if it contained a cycle homologous to $0$, it would be possible to deform it and get a $1$-parameter family of solutions, contradicting the genericity of the constraints. Thus, $b_1(\Gamma\backslash\P)\leqslant 2$. As $\chi(\Gamma\backslash\P)=1-g+g-2=-1$, we get that $b_0(\Gamma\backslash\P)=b_1(\Gamma\backslash\P)-1\leqslant 1$. Hence, the complement of the marked points is connected, and of genus $2$ with the map to $H_1(\TT A,\ZZ)$ injective.
\end{proof}

Proposition \ref{proposition form of solutions} allows one to describe more precisely the form of the solutions. It says that for each solution $h:\Gamma\to\TT A$, there is a distinguished subgraph $\Sigma\subset\Gamma$ on which the complement of the marked points retracts. This is a graph of genus $2$. Should the curve only be subject to the point constraints, it would be possible to deform this subgraph. It is fixed using the linear system constraints and Theorem \ref{theorem relation moment linear system tropical}.

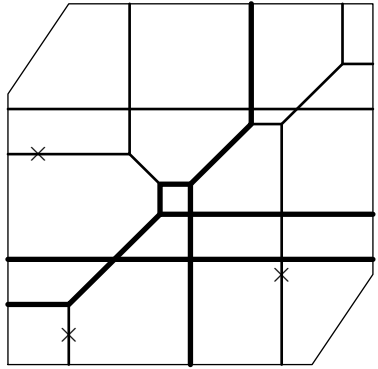
\begin{figure}
\begin{center}
\begin{tikzpicture}[line cap=round,line join=round,>=triangle 45,x=0.4cm,y=0.4cm]
\draw [line width=0.5pt] (0,0)--++ (10,0)--++ (2,3)--++ (0,9)--++ (-10,0)--++ (-2,-3)--++ (0,-9);

\draw [line width=2pt] (0,2)--++ (2,0)--++ (3,3)--++ (7,0);
\draw [line width=2pt] (0,3.5)--++ (12,0);

\draw [line width=2pt] (6,0)--++ (0,6)--++ (2,2)--++ (0,4);

\draw [line width=2pt] (5,5)--++ (0,1)--++ (1,0);

\draw [line width=1pt] (0,7)--++ (4,0)--++(0,5);
\draw [line width=1pt] (5,6)--++ (-1,1);
\draw [line width=1pt] (2,0)--++ (0,2);
\draw [line width=1pt] (9,0)--++ (0,8);
\draw [line width=1pt] (8,8)--++ (1,0)--++(2,2)--++(1,0);
\draw [line width=1pt] (11,10)--++ (0,2);
\draw [line width=1pt] (0,8.5)--++ (12,0);

\draw (2,1) node {$\times$};
\draw (1,7) node {$\times$};
\draw (9,3) node {$\times$};

\end{tikzpicture}
\caption{\label{figure tropical curve point linear system dumbbell} A genus $5$ tropical curve in a fixed linear system passing through $3$ points.}
\end{center}
\end{figure}

\begin{expl}
On Figure \ref{figure curve point and linear system} $(a)$ we have a genus $5$ tropical curve passing through $3$ points in a fixed linear system, and the graph $\Sigma$ drawn in bold. It is a Theta graph, but it can also be a dumbbell graph, as depicted on Figure \ref{figure tropical curve point linear system dumbbell}.
\end{expl}

\subsubsection{Multiplicities.} Recall from the first paper the two following multiplicities for tropical curves.

\begin{defi}
Let $h:\Gamma\to\TT A$ be a simple tropical curve. We define
\begin{itemize}[label=$\circ$]
\item the usual multiplicity $m_\Gamma=\prod_V m_V$, where $m_V=|\det(a_V,b_V)|$ is the absolute value of the determinant between two out of the three outgoing slopes for each trivalent vertex,
\item the refined multiplicity $m^q_\Gamma=\prod_V [m_V]_q=\prod_V \frac{q^{m_V/2}-q^{-m_V/2}}{q^{1/2}-q^{-1/2}}\in\ZZ[q^{\pm 1/2}]$, where the vertex multiplicity $m_V$ is replaced by its quantum analog.
\end{itemize}
\end{defi}

In the case of curves passing through $g$ points, although the correspondence theorem gives a multiplicity equal to $\delta_\Gamma m_\Gamma$, we can prove that the multiplicity $m_\Gamma$ as well as the refined multiplicity $m_\Gamma^q$ also provide invariants. This is proved by studying deformation of the tropical curves and walls of the enumerative problem. Unfortunately, these multiplicities do not provide an invariant in general for the enumeration of curves in a fixed linear system due to the appearance of walls of a new kind. As the correspondence theorem from the next section shows, there is an extra term appearing in the multiplicity. This extra-term is necessary for the invariance.

\begin{defi}
Let $h:\Gamma\to\TT A$ be a simple tropical curve and $\Sigma\subset\Gamma$ a genus $2$ subgraph. We define $\Lambda_\Gamma^\Sigma$ to be the index of $H_1(\Sigma,\ZZ)$ inside $H_1(\TT A,\ZZ)\simeq\Lambda$.
\end{defi}

\begin{expl}
Let $(\lambda_1,\lambda_2)$ be the basis of $H_1(\TT A,\ZZ)\simeq\Lambda$ obtained by taking a horizontal loop and a vertical loop. All the vertices of the curve on Figure \ref{figure curve point and linear system} have multiplicity $1$, and the two cycles generating the homology of the bold part $\Sigma$ realize the classes $2\lambda_1$ and $2\lambda_2$, so that $\Lambda_\Gamma^Sigma=4$.  Meanwhile, for the curve on Figure \ref{figure tropical curve point linear system dumbbell}, the cycles realize the homology classes $2\lambda_1$ and $\lambda_2$, so that one has $\Lambda_\Gamma^\Sigma=2$.
\end{expl}

The multiplicity provided by the correspondence theorem uses $\Lambda_\Gamma^\Sigma$: the multiplicity of a curve passing through $\P$ depends on the subgraph on which the complement of the marked points retracts. We now introduce the following enumerative counts:
\begin{align*}
N_{g,C,k}^{FLS}(\TT A,\P) = \sum_{\substack{h(\Gamma)\supset\P \\ \delta(\Gamma)=k}} \Lambda_\Gamma^\Sigma m_\Gamma \in\NN , \\
BG_{g,C,k}^{FLS}(\TT A,\P) = \sum_{\substack{h(\Gamma)\supset\P \\ \delta(\Gamma)=k}} \Lambda_\Gamma^\Sigma  m^q_\Gamma \in\ZZ[q^{\pm 1/2}] .\\
\end{align*}
counting curves with a fixed gcd, and then the following counts for curves without a gcd condition:
\begin{align*}
M_{g,C}^{FLS}(\TT A,\P) &  = \sum_{h(\Gamma)\supset\P} \Lambda_\Gamma^\Sigma  m_\Gamma = \sum_{k|\delta(C)} N_{g,C,k}^{FLS}(\TT A,\P) \in\NN , \\
N_{g,C}^{FLS}(\TT A,\P) &  = \sum_{h(\Gamma)\supset\P} \delta_\Gamma \Lambda_\Gamma^\Sigma  m_\Gamma = \sum_{k|\delta(C)}k N_{g,C,k}^{FLS}(\TT A,\P) \in\NN , \\
R_{g,C}^{FLS}(\TT A,\P) & = \sum_{h(\Gamma)\supset\P} \delta_\Gamma \Lambda_\Gamma^\Sigma  m^q_\Gamma \in \ZZ[q^{\pm 1/2}] , \\
BG_{g,C}^{FLS}(\TT A,\P) & = \sum_{h(\Gamma)\supset\P} \Lambda_\Gamma^\Sigma  m^q_\Gamma = \sum_{k|\delta(C)} BG_{g,C,k}^{FLS}(\TT A,\P) \in \ZZ[q^{\pm 1/2}] . \\
\end{align*}

As in the case of curves passing through $g$ points, we prove the invariance for the counts of curves with fixed gcd, and the invariance for all the other counts follows. Only the count $N_{g,C}^{FLS}(\TT A,\P)$ has a meaning in teh complex setting as the count $\N_{g,C}^{FLS}$ of genus $g$ complex curves passing through $g-2$ points in a fixed linear system. The meaning of the refined counts remains open although already known interpretation of these refined invariants in the toric setting can probably be adapted.

\subsection{Correspondence theorem}
\label{section correspondence}

In \cite{nishinou2020realization}, T. Nishinou gives a realization theorem that describes the possible deformation of a tropical curve close to the tropical limit, and then uses this realization theorem to prove a correspondence theorem and compute the number of complex passing through $g$ points with a given tropicalization. In this section, we adapt the proof of the correspondence theorem to compute the number of tropical curves passing through $g-2$ points and belonging to a fixed linear system. This uses Theorem \ref{theorem relation moment linear system complex}.

\subsubsection{Tropical evaluation map.} Let $h:\Gamma\to\TT A$ be a simple tropical curve passing through $\P$ and belonging to the preferred linear system. We denote by $\Gamma'$ the graph whose vertices are the vertices of $\Gamma$ and the marked points. We consider the following map:
$$\Psi: \bigoplus_{V\in V(\Gamma')} N \to \bigoplus_{e\in E(\Gamma')}N/N_e\oplus\bigoplus_1^{g-2} N\oplus \Lambda^*.$$
Let $(\phi_V)\in\bigoplus_{V\in V(\Gamma')}N$.
\begin{itemize}[label=$\circ$]
\item The coordinates in $N/N_e$ are obtained as the difference of the coordinates for the two adjacent vertices: $\phi_{\partial^+ e}-\phi_{\partial^- e}$.
\item The coordinates in $\bigoplus_1^{g-2} N$ are the coordinates of the marked points: $\phi_{V_i}$. These are the same as for the map $\Psi$ considered in the first paper and in \cite{nishinou2020realization}.
\item The coordinates on $\Lambda^*$ are related to Theorem \ref{theorem relation moment linear system tropical} and obtained as follows: for a basis $\lambda_1,\lambda_2$ of $\Lambda$, choose loops in $\TT A$ that represent the classes and that are transverse to $\Gamma$. For $\lambda_1$ (resp. $\lambda_2$), let $E_i$ be the oriented edges intersected by $\lambda_1$ (resp. $\lambda_2$), $V_i$ their source, and $u_i$ their slope. Then, do the sum $\sum_i \det(u_i,\phi_{V_i})$. In other words, it is the sum of the moments of the edges met by a loop representing the homology class. A different choice of path lifting the loop leads to a different map $\Psi$, obtained by a change of basis on the codomain of $\Psi$ using balancing condition.
\end{itemize}

\begin{rem}
Technically, the moment depends on the edge, and not on the vertices adjacent to it. However, as the domain of the map corresponds to the position of the vertices, we take one of the vertices adjacent to the edge to compute its moment. The map corresponding to another choice of vertex, or another choice of loop is obtained by adding or substracting components from $N/N_e$.
\end{rem}

The domain of $\Psi$ has rank
$$2(2g-2+g-2)=6g-8,$$
and its codomain has rank 
$$3g-3+g-2+2(g-2)+2=6g-7.$$
The map cannot be surjective. In fact, as in the first paper, we still have the relation among the coordinates  of $\bigoplus_e N/N_e$, corresponding to the Menelaus relation on the curve obtained by the lifting procedure.

\subsubsection{Complex setting and correspondence.} Let $\CC A_t$ be a family of complex abelian surfaces as considered in the second section, tropicalizing to $\TT A$. We assume that there is a fixed linear system with curves in it, and we consider a configuration of $g-2$ points $\P_t\subset\CC A_t$ that tropicalizes to a generic point configuration $\P\subset\TT A$. We know that the number of genus $g$ complex curves in the linear system passing through $\P_t$ is equal to $\N_{g,C}^{FLS}$. We now state the correspondence theorem.

\begin{theo}\label{theorem correspondence}
Let $h:\Gamma\to \TT A$ be a parametrized tropical curve passing through $\P$ and in the fixed linear system. The number of genus $g$ complex curves in the fixed linear system passing through $\P_t$ and that tropicalize to $h:\Gamma\to\TT A$ is
$$m_\Gamma^\CC=|\ker\Psi_{\CC^*}|\prod_{e\in E(\Gamma')}w_e.$$
In particular, we have that $N_{g,C}^{FLS}(\TT A,\P)$ does not depend on $\P$ and $\TT A$ as long as these choices are generic, and $N_{g,C}^{FLS}=\N_{g,C}^{FLS}$.
\end{theo}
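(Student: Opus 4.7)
The plan is to adapt T.~Nishinou's proof of the correspondence theorem from \cite{nishinou2020realization}, which we already used in the first paper for $g$-point constraints, by replacing two of the point conditions with the fixed linear system constraint. The key observation is that Theorem \ref{theorem relation moment linear system complex} expresses the linear system condition as a pair of equations on moments of circles on the complex curve, and these equations are precisely the complex analog of the last two coordinates of the tropical map $\Psi$ defined just above the statement. Thus the whole strategy reduces to identifying the complex curves that tropicalize to $h:\Gamma\to\TT A$ with the fibers of a complex evaluation map $\Psi_{\CC^*}$ that is a suitable ``complexification'' of $\Psi$, and then computing the size of a generic fiber.

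Concretely, I would first recall Nishinou's realization procedure: close to the tropical limit, a complex curve $\CCC_t$ in $\CC A_t$ that tropicalizes to $\Gamma$ is built by choosing, for every vertex $V\in V(\Gamma')$, a lift $\phi_V\in N_{\CC^*}$ of $h(V)$, together with, for every edge $e$, a parametrized $\CC^*$-component of slope $w_e u_e$ passing through the lifts of its endpoints. Matching the two endpoints at a vertex produces, for each edge, a condition in $N/N_e\otimes\CC^*$: this is the first block of $\Psi_{\CC^*}$ and it already appears in the first paper. Fixing the marked points $\P_t$ amounts to imposing $\phi_{V_i}$ equal to the $i$-th point of $\P_t$, producing the middle $\bigoplus_1^{g-2}N_{\CC^*}$-block. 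The novelty is the last block: by Theorem \ref{theorem relation moment linear system complex}, the linear system condition is equivalent to the statement that, for a basis $\lambda_1,\lambda_2$ of $\Lambda$, the product of the moments of the circles cut on $\CCC_t$ by a cobordism $X_{\lambda_j}$ equals a prescribed element of $\CC^*$. Close to the tropical limit, the circles cut on $\CCC_t$ are in bijection with the edges of $\Gamma$ met by the loop representing $\lambda_j$, and a short computation on the standard parametrization of a $\CC^*$-piece of slope $w_eu_e$ through $\phi_V$ shows that its moment tropicalizes to $\det(w_eu_e,\phi_V)$. This recovers exactly the $\Lambda^*$-block of $\Psi$ and identifies the complex problem with fibers of $\Psi_{\CC^*}$.

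Once this identification is set up, the count is standard. The genericity of $\P$ and $\TT A$ forces $\Psi_{\CC^*}$ to be surjective onto its codomain (this requires checking that the cokernel of the tropical $\Psi$ is trivial at a simple rigid curve, which is ensured by Proposition \ref{proposition form of solutions}) and therefore the number of lifts of $h$ to an actual complex curve with prescribed point and linear system data equals $|\ker\Psi_{\CC^*}|$, weighted by an additional factor $\prod_{e}w_e$ coming from the $w_e$ choices of $w_e$-th root of unity one has to make when gluing a branched cover of degree $w_e$ on each edge, exactly as in Nishinou's argument. The independence of $N_{g,C}^{FLS}(\TT A,\P)$ of the choices of $\P$ and $\TT A$ and the identification with $\N_{g,C}^{FLS}$ then follow formally: summing $m_\Gamma^\CC$ over all tropical types gives the total number of complex solutions, which is a deformation invariant of the family $\CC A_t$ and equals $\N_{g,C}^{FLS}$ by definition.

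The main obstacle will be the analysis of the last block of $\Psi_{\CC^*}$: one must verify that the moment integrals of Theorem \ref{theorem relation moment linear system complex}, computed on the genuine complex curve (rather than on its tropical limit), reduce, after taking $\log_t$ and letting $t\to\infty$, to the tropical expression $\sum_i\det(w_{e_i}u_{e_i},\phi_{V_i})$ used to define $\Psi$, up to lower-order corrections that can be absorbed in the choice of the base-point $(x_0,y_0)$ and in the choices of logarithms made in the proof of Theorem \ref{theorem relation moment linear system complex}. This is the analog for the linear-system constraint of the $\theta$-function quasi-periodicity bookkeeping of Section \ref{section theta function}, and it requires one to check carefully the compatibility between the cobordism $X_\lambda$ and the pair-of-pants decomposition of $\CCC_t$ induced by the tropical limit, so that each intersection circle $\gamma_i$ can be assigned unambiguously to a single edge. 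Once this compatibility is established the remainder of the argument is essentially parallel to the first paper, and the formula $m_\Gamma^\CC=|\ker\Psi_{\CC^*}|\prod_{e\in E(\Gamma')}w_e$ follows.
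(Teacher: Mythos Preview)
Your overall strategy is the same as the paper's: adapt Nishinou's argument by replacing two point conditions with the moment conditions coming from Theorem \ref{theorem relation moment linear system complex}, and identify the count with $|\ker\Psi_{\CC^*}|\prod_e w_e$. The identification of the $\Lambda^*$-block via moments at the tropical limit is exactly right.

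There are, however, two points where your write-up diverges from what actually happens and would need repair.

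First, your claim that ``the cokernel of the tropical $\Psi$ is trivial'' and that $\Psi_{\CC^*}$ is surjective is false. As noted just before the statement of the theorem, the domain of $\Psi$ has rank $6g-8$ and the codomain $6g-7$; the cokernel has free rank $1$, coming from the Menelaus relation among the $N/N_e$-coordinates. Proposition \ref{proposition form of solutions} does not give triviality of the cokernel; it gives injectivity of $\Psi_\RR$ (rigidity). What makes the fiber count work is that $\Psi_{\CC^*}$ is a group homomorphism, so any nonempty fiber is a $\ker\Psi_{\CC^*}$-torsor; the paper phrases this as the set $\SSS_{\mathrm{prelog}}$ of prelog solutions being a torsor. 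In the multiplicity computation (Theorem \ref{theorem multiplicity formula}) this shows up as the short exact sequence $0\to\ZZ^{6g-8}\xrightarrow{\Psi}\ZZ^{6g-7}\to\ZZ\oplus G\to 0$, with $|\ker\Psi_{\CC^*}|=|G|$ the gcd of maximal minors.

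Second, and more substantially, you elide the deformation step. The objects parametrized by $\Psi_{\CC^*}$ are not curves in $\CC A_t$; they are \emph{prelog} curves in the degenerate central fiber $\CC A_\infty$. Your factor $\prod_e w_e$ (roots of unity on edges) is the count of log-structures on each prelog curve, but you still owe the argument that each resulting log-curve deforms uniquely to a genuine solution in $\CC A_t$ satisfying the point \emph{and} linear system constraints. The paper does this by identifying $H^0(\CCC_\infty,\N)\simeq\ker F_\CC$ and showing that the augmented evaluation map
\[
\Phi:H^0(\CCC_\infty,\N)\longrightarrow \bigoplus_1^{g-2}\Theta_{\CC A_\infty}(p_i)/\varphi_{\infty*}\Theta_{\CCC_\infty}(p_i)\ \oplus\ \Lambda_\CC^*
\]
is an isomorphism (the $\Lambda_\CC^*$ summand is the infinitesimal variation of the moment integrals, i.e.\ the tangent to the Picard group). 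This is what allows the order-by-order lifting over $\Bbbk_n=\CC[1/t]/(1/t^{n+1})$ to go through with the linear system constraint imposed at every order. Your ``main obstacle'' paragraph worries about the leading-order behavior of the moments, but the genuine issue is this infinitesimal surjectivity onto $\Lambda_\CC^*$, without which you cannot correct the linear system at each step of the induction.
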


\begin{rem}
It should also be possible to adapt the proof of the correspondence theorem from \cite{nishinou2020realization} to include the constraints coming from $1$-cycles and compute the invariants introduced in \cite{bryan1999generating}.
\end{rem}

Theorem \ref{theorem correspondence} is proven in section \ref{section proof correspondence}.

\subsection{Statement of the results}

\subsubsection{Multiplicity formula.} The correspondence theorem gives a formula for the multiplicity with which to count the tropical curves solution to the tropical enumerative problem so that their count yields the complex invariant $\N_{g,C}^{FLS}$. The following theorem gives an more concrete expression of this multiplicity.

\begin{theo}\label{theorem multiplicity formula}
Let $h:\Gamma\to\TT A$ be a parametrized tropical curve in the fixed linear system that passes through the generic configuration $\P$. Let $\Sigma\subset\Gamma$ be the genus $2$ subgraph on which $\Gamma\backslash\P$ retracts. The multiplicity $m^\CC_\Gamma$ splits as the following product:
$$m^\CC_{\Gamma}=\delta_\Gamma \Lambda_\Gamma^\Sigma m_\Gamma.$$
\end{theo}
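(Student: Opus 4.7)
The starting point is Theorem~\ref{theorem correspondence}, which gives
\[
m_\Gamma^\CC = |\ker\Psi_{\CC^*}|\prod_{e \in E(\Gamma')} w_e,
\]
so the task reduces to proving that $|\ker\Psi_{\CC^*}|\prod_{e} w_e = \delta_\Gamma \Lambda_\Gamma^\Sigma \prod_V m_V$. By the rigidity half of Proposition~\ref{proposition form of solutions}, $\Psi_\RR$ is injective modulo the kernel coming from global translations of $\Gamma$, so $|\ker\Psi_{\CC^*}|$ can be computed as the product of torsion orders in the cokernel of $\Psi$ viewed as a map of lattices, i.e.\ the index of $\mathrm{Im}(\Psi)$ in its saturation inside the codomain.

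The key structural input is the decomposition $\Gamma' = \Sigma \cup \mathcal{F}$, where $\mathcal{F}$ is the forest of ``tails'' joining the marked points to the genus-$2$ subgraph $\Sigma$. Fix a spanning tree $T$ of $\Gamma'$ containing all of $\mathcal{F}$ together with all but two edges $e_1, e_2$ of $\Sigma$, chosen so that the cycles $\gamma_1, \gamma_2$ closed up by $e_1, e_2$ in $T$ form a $\ZZ$-basis of $H_1(\Sigma, \ZZ)$. This splits $\Psi$ into a \emph{tree part} (edge conditions along $T$ together with the marked-point coordinates) and a \emph{cycle part} (the two edge conditions at $e_1, e_2$ together with the two $\Lambda^*$-coordinates from the linear system).

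For the tree part, one fixes the marked-point positions using the $\bigoplus_{i=1}^{g-2} N$-factor of the codomain, then propagates along $T$: each tree edge $e$ contributes a factor $w_e$ from the quotient $N/N_e$, and each trivalent vertex lying in $T$ contributes its vertex multiplicity $m_V$ to the lattice index, via the computation already carried out in the first paper of this series. For the cycle part, Theorem~\ref{theorem relation moment linear system tropical} translates the $\Lambda^*$-coordinates into sums of edge moments along representatives of $\gamma_1, \gamma_2$; expanding these sums shows that the combined $4\times 4$ matrix of the cycle part factors into two $2\times 2$ blocks. The first block has determinant $w_{e_1}w_{e_2} \cdot m_{V_1}m_{V_2}$, where $V_1, V_2$ are the trivalent vertices of $\Sigma$ at which $e_1, e_2$ close up the cycles; the second block is the change-of-basis matrix from the cycles $\gamma_1,\gamma_2$ to the fixed $\ZZ$-basis of $\Lambda$ used to set up $\Psi$, whose determinant is by definition the index $\Lambda_\Gamma^\Sigma = [\Lambda : H_1(\Sigma, \ZZ)]$.

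Multiplying the tree and cycle contributions and balancing against $\prod_e w_e$ produces $|\ker\Psi_{\CC^*}|\prod_e w_e = \delta_\Gamma \Lambda_\Gamma^\Sigma \prod_V m_V$, with the residual factor $\delta_\Gamma$ coming from the group of global translations of $\Gamma$ that preserve all constraints (of order $\delta_\Gamma$, not $\delta_\Gamma^2$, because one of the two translational directions is already absorbed by the single Menelaus-type relation responsible for the codimension gap between domain and codomain of $\Psi$). The main obstacle is the cycle-part analysis: the $4\times 4$ block must be set up explicitly from Theorem~\ref{theorem relation moment linear system tropical}, which forces us to encode the linear-system coordinates as moment sums along $\gamma_1, \gamma_2$, and then recognize one of the two resulting block determinants as the geometric index $\Lambda_\Gamma^\Sigma$. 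This step is the genuinely new ingredient, absent from the $g$-point enumerative problem of the first paper.
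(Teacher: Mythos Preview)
Your overall strategy matches the paper's: both compute $|\ker\Psi_{\CC^*}|$ as the torsion order of the cokernel, equivalently as the gcd of the maximal minors of $\Psi$, and both recognise that the genuinely new contribution comes from the $\Lambda^*$-rows and produces the factor $\Lambda_\Gamma^\Sigma$. But two points in your argument do not hold up.

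\textbf{The spanning-tree/cycle decomposition is miscounted.} The graph $\Gamma'$ has genus $g$, so any spanning tree leaves $g$ edges out, not two. Your forest $\mathcal{F}$ of tails from the marked points to $\Sigma$ only takes one of the two edges at each marked point; the other $g-2$ edges, together with $e_1,e_2\subset\Sigma$, are the non-tree edges. Consequently the ``cycle part'' is not a $4\times 4$ block but involves $g$ non-tree edge conditions plus the two $\Lambda^*$-conditions, against $g+1$ remaining parameters. The paper avoids this by not choosing a spanning tree at all: it develops the determinant by first evaluating the marked-point rows, then \emph{pruning} the branches of $\Gamma\backslash\P$ leaf by leaf (each pruned trivalent vertex contributes $\det(u_1,u_2)=m_V/(w_1w_2)$), until only the genus-$2$ skeleton $\Sigma$ remains. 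At that stage the $\Lambda^*$-rows are handled by deforming the loops $\lambda_1,\lambda_2$ so that all their intersections with $\Sigma$ lie on the edges adjacent to a \emph{single} vertex $V$ of $\Sigma$; the resulting $2\times 2$ block $N\to\Lambda^*$ then factors as
\[
\begin{pmatrix}\gamma_1\cdot\lambda_1 & \gamma_2\cdot\lambda_1\\ \gamma_1\cdot\lambda_2 & \gamma_2\cdot\lambda_2\end{pmatrix}
\begin{pmatrix}\det(u_1,e_1) & \det(u_1,e_2)\\ \det(u_2,e_1) & \det(u_2,e_2)\end{pmatrix},
\]
with determinants $\Lambda_\Gamma^\Sigma$ and $m_V$ respectively. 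After this $V$ is deleted, the graph becomes a tree, and the remaining pruning finishes the computation.

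\textbf{The origin of $\delta_\Gamma$ is not global translations.} There are no nontrivial diagonal elements in $\ker\Psi_{\CC^*}$: a common translation $\zeta\in N_{\CC^*}$ is killed by the edge conditions but is detected verbatim by the marked-point coordinates $\phi_{V_i}\mapsto\phi_{V_i}$, forcing $\zeta=1$. In the paper the factor $\delta_\Gamma$ appears for a different reason. Because of the single Menelaus relation among the $N/N_e$-coordinates, the only nonzero maximal minors of $\Psi$ are those obtained by deleting one row $N/N_{e_0}$; the pruning computation shows that this minor equals $\dfrac{w_{e_0}}{\prod_e w_e}\,\Lambda_\Gamma^\Sigma\,\prod_V m_V$. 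Taking the gcd over all choices of $e_0$ and then multiplying by $\prod_e w_e$ is what produces $\delta_\Gamma=\gcd_e w_e$. Your Menelaus remark points at the right relation, but the mechanism is ``gcd over the choice of forgotten row'', not a residual translation symmetry.
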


The multiplicity is comprised of three terms: the gcd of the weights of the edges, the product of vertex multiplicities, and a new additional term that is equal to the index of the $H_1(\Gamma\backslash\P,\ZZ)$ inside $\Lambda$. This term appears because unlike the case of curves passing through $g$ points, the point conditions are not enough to determine the phases of the edges of the curves.

\begin{expl}
As all the vertices are of multiplicity $1$, the curves on Figure \ref{figure curve point and linear system} $(a)$ and Figure \ref{figure tropical curve point linear system dumbbell} have respective multiplicities $4$ and $2$.
\end{expl}

\subsubsection{Invariance statements.} The correspondence theorem relates the count of tropical curves using multiplicity $\delta_\Gamma\Lambda_\Gamma^\Sigma m_\Gamma$ to the complex invariants from \cite{bryan1999generating} and \cite{bryan2018curve}. Thus, the invariance of the count $N_{g,C}^{FLS}(\TT A,\P)$ follows from the invariance on the complex side. We now study the invariance using the refined multiplicity from a tropical point of view. First, we have an invariance statement regarding the choice of the point configuration $\P$.

\begin{theo}\label{theorem point invariance}
The refined count $BG_{g,C,k}^{FLS}(\TT A,\P)$ of genus $g$ curves in the class $C$ with fixed gcd passing through $\P$ in the fixed linear system does not depend on the choice of $\P$ and the line bundle as long as it is generic.
\end{theo}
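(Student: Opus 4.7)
The plan is to follow the standard wall-crossing strategy used for refined invariance in the toric case (as in \cite{itenberg2013block}), but adapted to the new setting where the complement $\Gamma\backslash\P$ retracts onto a genus $2$ subgraph $\Sigma$ rather than a tree, and where the weight $\Lambda_\Gamma^\Sigma$ enters the multiplicity. Concretely, I would parametrize the space of pairs (point configuration, line bundle) and observe that the non-generic locus is a codimension-one real subvariety. It then suffices to show that the refined count $BG_{g,C,k}^{FLS}(\TT A,\P)$ is locally constant across each codimension one stratum, i.e.\ that it takes the same value when one perturbs $\P$ or the line bundle from one side of a wall to the other.

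The first step is to enumerate the codimension one degenerations. As recalled in the introduction, there are exactly three types of walls: contraction of an internal edge (producing a tetravalent vertex), contraction of a cycle to a pair of parallel edges (a ``flat cycle''), and collision of a marked point with a vertex of $\Sigma$. The standard Block--G\"ottsche identity
$$[m_1+m_2]_q[m_3]_q-[m_2+m_3]_q[m_1]_q=[m_1]_q[m_2+m_3]_q\cdots$$
(more precisely the quantum Pl\"ucker relation satisfied by the refinement $[\,\cdot\,]_q$) handles the first wall in a way formally identical to the toric case: for a generic tetravalent vertex, the two adjacent combinatorial types contribute refined multiplicities whose sum is symmetric in the four outgoing slopes, and the extra factors $\delta_\Gamma$ and $\Lambda_\Gamma^\Sigma$ remain unchanged through this local move (the subgraph $\Sigma$ is the same on both sides of the wall, and the gcd of weights is unaffected). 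The second wall (contraction of a cycle) was already handled in the first paper of the series for curves through $g$ points, and the argument carries over verbatim since the cycle being degenerated is automatically trivial in $H_1(\TT A,\ZZ)$, so that $\Sigma$ and $\Lambda_\Gamma^\Sigma$ are again unchanged.

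The main obstacle, and the novel point compared to toric or through-$g$-points settings, is the third wall: a marked point $p_i$ collides with a vertex $V\in\Sigma$. Because $\Gamma\backslash\P$ is no longer a forest, $p_i$ is not constrained to migrate from one specific edge adjacent to $V$ onto another specific one; instead, when $V\in\Sigma$ is trivalent with three adjacent edges, each of the three resulting combinatorial types can give a valid solution, and the combinatorics is controlled by the way each adjacent edge modifies the generator of $H_1(\Sigma,\ZZ)$ when $p_i$ slides along it. The plan here is to fix the wall configuration with $p_i$ at $V$, consider the three local branches corresponding to the three adjacent edges along which $p_i$ can move, and compute for each branch the new subgraph $\Sigma'$ together with its lattice index $\Lambda_\Gamma^{\Sigma'}$ in $\Lambda$. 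Using the balancing condition at $V$ and Theorem \ref{theorem relation moment linear system tropical} to express the linear system constraint, one reduces the invariance to a linear-algebra identity among the three indices $\Lambda_\Gamma^{\Sigma_k}$ and the vertex multiplicity $m_V$, analogous to the quantum Pl\"ucker identity: the refined count across this wall reads
\begin{equation*}
\sum_{k=1}^{3}\varepsilon_k\,\Lambda_\Gamma^{\Sigma_k}\,m^q_{\Gamma_k}=0,
\end{equation*}
where $\varepsilon_k\in\{+1,-1\}$ records which side of the wall each branch is on. The identity follows from the fact that the three classes in $H_1(\TT A,\ZZ)$ obtained by sliding $p_i$ across each adjacent edge satisfy a linear relation dictated by balancing at $V$; this linear relation is exactly what produces the extra factor $\Lambda_\Gamma^\Sigma$ and makes it the \emph{unique} refinement of the vertex contribution which is locally invariant.

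Once these three local checks are in place, the global invariance over the space of generic configurations follows by connectedness. The same argument applies separately to curves of each fixed $\gcd=k$, since $\delta_\Gamma$ is preserved through all three wall moves, yielding the invariance of $BG_{g,C,k}^{FLS}(\TT A,\P)$. Invariance of the unrefined count $N_{g,C,k}^{FLS}(\TT A,\P)$ and of the other counts listed before Theorem \ref{theorem point invariance} is obtained by specialization $q\to 1$ and by summation over $k$.
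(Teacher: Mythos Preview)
Your overall strategy coincides with the paper's: a wall-crossing argument with the three families of walls you list, and the recognition that only the wall where a marked point meets a vertex of $\Sigma$ is genuinely new. The first two walls are indeed handled exactly as in \cite{itenberg2013block}, since $\Sigma$, $\delta_\Gamma$ and $m^q_\Gamma$ are unchanged across them. However, your treatment of the new wall has a real gap and a confusion, and there is one sub-case you omit.

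\textbf{The gap.} You do not record the decisive simplification that at this wall the three adjacent combinatorial types share the \emph{same} underlying curve $\Gamma$: only the location of the marked point on one of the three edges adjacent to $V$ differs. Consequently $m^q_{\Gamma_1}=m^q_{\Gamma_2}=m^q_{\Gamma_3}$ and $\delta_{\Gamma_1}=\delta_{\Gamma_2}=\delta_{\Gamma_3}$. The required invariance is therefore not a ``quantum Pl\"ucker identity'' mixing $m_V$ with the indices, but a purely classical additive relation among the lattice indices,
\[
\Lambda_\Gamma^{\Sigma_1}+\Lambda_\Gamma^{\Sigma_2}=\Lambda_\Gamma^{\Sigma_3}
\]
(after relabeling). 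There is nothing $q$-dependent to prove here.

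\textbf{The confusion.} This relation does not come from the tropical balancing condition on slopes in $N$. In the paper's argument one takes the genus~$3$ graph $\Sigma_0$ obtained by placing the marked point at $V$, and obtains each $\Sigma_i$ by deleting one of the three edges at $V$. Then $H_1(\Sigma_i,\ZZ)=\ker\varphi_i$ for the cocycle $\varphi_i\in H^1(\Sigma_0,\ZZ)$ measuring flow through the $i$-th edge, and Kirchhoff's law at $V$ gives $\varphi_1+\varphi_2+\varphi_3=0$. Dually, the Pl\"ucker vectors of the three rank-$2$ sublattices satisfy $\alpha_1\wedge\beta_1+\alpha_2\wedge\beta_2+\alpha_3\wedge\beta_3=0$ in $\bigwedge^2 H_1(\Sigma_0,\ZZ)$; pushing forward to $\bigwedge^2 H_1(\TT A,\ZZ)\simeq\ZZ$ yields the signed additive relation on the $\Lambda_\Gamma^{\Sigma_i}$. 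This is the mechanism you should make explicit.

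\textbf{The omission.} You should also treat the case where the marked point meets a trivalent vertex \emph{not} on $\Sigma$. There one of the three adjacent combinatorial types makes $\Gamma\backslash\P$ disconnected and hence contributes no solution, so only two types occur and they have identical multiplicity; this is the usual toric argument.
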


We then have the following corollary.

\begin{coro}
The counts $N_{g,C,k}^{FLS}(\TT A,\P)$, $M_{g,C}^{FLS}(\TT A,\P)$, $N_{g,C}^{FLS}(\TT A,\P)$, $R_{g,C}^{FLS}(\TT A,\P)$ and $BG_{g,C}^{FLS}(\TT A,\P)$ do not depend on the choice of $\P$ and the line bundle as long as it is generic.
\end{coro}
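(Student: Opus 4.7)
The plan is simply to deduce each of the five counts from the refined, gcd-fixed count $BG_{g,C,k}^{FLS}(\TT A,\P)$ by a combination of specialization of the formal variable $q$ and summation over the divisors $k\mid\delta(C)$ with coefficients depending only on $k$. Since Theorem \ref{theorem point invariance} asserts that each $BG_{g,C,k}^{FLS}(\TT A,\P)$ is independent of the generic data $(\P,\mathcal{L})$, any such coefficient-combination inherits that independence.

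First I would point out the specialization identity: at $q=1$ the quantum integer $[m_V]_q=\frac{q^{m_V/2}-q^{-m_V/2}}{q^{1/2}-q^{-1/2}}$ collapses to $m_V$, so $m_\Gamma^q|_{q=1}=m_\Gamma$ and consequently
\[
N_{g,C,k}^{FLS}(\TT A,\P)=BG_{g,C,k}^{FLS}(\TT A,\P)\big|_{q=1}.
\]
Since substitution $q\mapsto 1$ is a ring homomorphism $\ZZ[q^{\pm 1/2}]\to\ZZ$, this already yields the invariance of $N_{g,C,k}^{FLS}(\TT A,\P)$ from that of $BG_{g,C,k}^{FLS}(\TT A,\P)$.

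Next I would read off the remaining four counts from the defining formulas already displayed in the introduction: $M_{g,C}^{FLS}=\sum_{k\mid\delta(C)}N_{g,C,k}^{FLS}$, $N_{g,C}^{FLS}=\sum_{k\mid\delta(C)}k\,N_{g,C,k}^{FLS}$, $BG_{g,C}^{FLS}=\sum_{k\mid\delta(C)}BG_{g,C,k}^{FLS}$, and $R_{g,C}^{FLS}=\sum_{k\mid\delta(C)}k\,BG_{g,C,k}^{FLS}$. In each case the divisors of $\delta(C)$ and the coefficient $k$ depend only on the homology class $C$, not on $\P$ nor on the line bundle. Hence these are fixed $\ZZ$-linear combinations of the invariants provided by Theorem \ref{theorem point invariance}, possibly followed by the specialization $q=1$ for $M_{g,C}^{FLS}$ and $N_{g,C}^{FLS}$; in either case the result is independent of the generic choices.

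There is no real obstacle here: the corollary is a purely formal consequence of the main invariance theorem together with the way the five counts were packaged from the refined gcd-fixed count. The only point worth stressing explicitly is that the genericity hypotheses under which Theorem \ref{theorem point invariance} holds are exactly the same as those needed here, so the inherited invariance holds on the same open locus of admissible pairs $(\P,\mathcal{L})$.
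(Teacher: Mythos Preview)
Your proof is correct and matches the paper's intended approach: the paper states this corollary with no proof, treating it as an immediate consequence of Theorem \ref{theorem point invariance} together with the defining decompositions of the five counts in terms of $BG_{g,C,k}^{FLS}$. Your argument spells out precisely the two trivial operations involved---specializing $q\to 1$ and summing over $k\mid\delta(C)$ with coefficients depending only on $k$---and there is nothing more to it.
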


We remove $\P$ from the notation to denote the associated invariant. Then, we have an invariance statement regarding the choice of the abelian surface $\TT A$.

\begin{theo}\label{theorem surface invariance}
The refined invariant $BG_{g,C,k}^{FLS}(\TT A)$ does not depend on the choice of $\TT A$ as long as it is chosen generically among the surfaces that contain curves in the class $C$.
\end{theo}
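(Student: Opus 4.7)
The plan is to connect any two generic tropical abelian surfaces by a generic path and invoke the wall-crossing analysis from Theorem \ref{theorem point invariance}. The moduli of surfaces containing curves in class $C$ is parametrized by the open cone $\Omega_C=\{S\in\hom(\Lambda,N_\RR) : CS^T\in\S_2^{++}(\RR)\}$, which is convex and hence path-connected. The subset of non-generic $S$ (for which some degeneration of the tropical enumerative problem occurs) is a real-codimension-one locus, so any two generic parameters $S_0,S_1\in\Omega_C$ are joined by a path $S_t$ meeting this locus transversally in finitely many simple walls.

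Along such a path I would choose a continuous family of point configurations $\P_t$ remaining generic for $\TT A_{S_t}$ at every $t$; by Theorem \ref{theorem point invariance}, the refined count $BG_{g,C,k}^{FLS}(\TT A_{S_t},\P_t)$ is independent of the choice of $\P_t$ and so depends only on $S_t$. Away from walls, the combinatorial type of each solution tropical curve is fixed, its edge lengths vary continuously with $(S_t,\P_t)$ as the affine constraints encoded by $\Psi$ vary continuously, and the refined count is locally constant in $t$.

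At each wall I would argue that the local combinatorial degeneration is one of the three types already treated in the proof of Theorem \ref{theorem point invariance}: contraction of an edge to a four-valent vertex, contraction of a cycle to a pair of parallel edges, or passage of a marked point through a vertex. The parameter $S$ enters the system of constraints only via the period lattice on the codomain side of $\Psi$, so a generic path $S_t$ encounters the same wall types, in the same local form, as a generic point deformation does for fixed surface. The wall-crossing invariance of $BG_{g,C,k}^{FLS}$, including the compatibility between the product of quantum vertex multiplicities and the factor $\Lambda_\Gamma^\Sigma$ across the wall, has been established there and applies verbatim.

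The main obstacle will be to rule out wall phenomena specific to deforming the surface that do not occur when deforming points alone. Concretely, one must exclude the possibility that at some $S$ the lattice $S\Lambda$ admits accidental alignments forcing a higher-codimension degeneration, a new kind of merger between combinatorial types, or a jump in $\Lambda_\Gamma^\Sigma$ uncompensated by the quantum identity used in the previous invariance theorem. Using the parametric description and the expression for $\ker\Psi_{\CC^*}$ from Section \ref{section correspondence}, such accidents correspond to linear conditions on $S$ of positive codimension, hence are avoided by a generic path. Granting this, local invariance across every wall yields $BG_{g,C,k}^{FLS}(\TT A_{S_0})=BG_{g,C,k}^{FLS}(\TT A_{S_1})$, completing the proof.
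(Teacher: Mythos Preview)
Your proposal has a genuine gap: you miss the wall type that is specific to deforming the surface. The non-generic locus in $\Omega_C$ consists of matrices $S$ for which the lattice $\{C'\in\Lambda\otimes N:C'S^T\in\S_2(\RR)\}$ is strictly larger than $\ZZ C_0$; this is a real-codimension-one condition and a generic path \emph{must} cross it. At such a point $t_*$, the class $C$ can decompose as $C_1+C_2$ with each $C_i$ realized by an actual tropical curve in $\TT A_{t_*}$, so limits of irreducible solutions can be \emph{reducible} curves. This is not a quadrivalent vertex, a contracted cycle, or a marked point meeting a vertex; it is a new wall in which the combinatorial type is simple on each component but the global curve is the union of two pieces meeting at nodes. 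Your reduction ``the local form is one of the three types already treated'' therefore fails, and the dismissal of surface-specific phenomena as higher-codimension is incorrect precisely here.

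The paper handles this differently. It first uses the cutting process from the first paper to put curves in $\TT A_t$ in bijection with planar tropical curves in $N_\RR$ subject to point conditions together with moment conditions on the unbounded ends (encoding both the gluing and the linear-system constraint via Theorem~\ref{theorem relation moment linear system tropical}). In that model, irreducible solutions at $t_*$ deform to nearby $t$ with no change in multiplicity. For a reducible solution at a non-generic $t_*$, one chooses a subset of intersection points between the irreducible components to smooth; each such choice yields exactly two deformations, one a solution for $t_*-\varepsilon$ and one for $t_*+\varepsilon$, and both deformations carry the same refined multiplicity (same $\delta_\Gamma$, same vertex multiplicities, same $\Lambda_\Gamma^\Sigma$). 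That pairing is what gives invariance across the reducibility wall; nothing in the proof of Theorem~\ref{theorem point invariance} supplies it.
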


\begin{coro}
The counts $N_{g,C,k}^{FLS}(\TT A)$, $M_{g,C}^{FLS}(\TT A)$, $N_{g,C}^{FLS}(\TT A)$, $R_{g,C}^{FLS}(\TT A)$ and $BG_{g,C}^{FLS}(\TT A)$ do not depend on the choice of $\TT A$ as long as it is generic among the abelian surfaces with curves in the class $C$.
\end{coro}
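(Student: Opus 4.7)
The plan is to establish the invariance of $BG_{g,C,k}^{FLS}(\TT A)$ along a generic path in the moduli of tropical abelian surfaces that admit curves in class $C$; the corollary then follows immediately because each of the other listed counts is a $\ZZ$-linear combination (indexed by $k \mid \delta(C)$) of the refined invariants $BG_{g,C,k}^{FLS}$. By the realizability proposition recalled in the excerpt, the surfaces $\TT A = N_\RR/S\Lambda$ containing curves in class $C$ correspond to lattice inclusions $S$ with $CS^T$ symmetric and positive definite; the symmetry cuts out a linear subspace of $\hom(\Lambda,N_\RR)$, inside which positive-definiteness defines an open convex cone, hence a connected open set. Two generic choices can therefore be joined by a generic smooth path $\TT A_t$ avoiding all codimension-two degenerations, and by Theorem \ref{theorem point invariance} we may simultaneously deform $\P_t$ continuously along the path, thereby avoiding a priori the wall where a marked point meets a vertex of the curve.

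Next I would classify the codimension-one walls met as $t$ varies. For a fixed combinatorial type $\Gamma$ with distinguished genus-$2$ subgraph $\Sigma$, the positions of the vertices of $\Gamma$ in $\TT A_t$ passing through $\P_t$ in the fixed linear system are solutions of an affine system whose coefficients depend on $S_t$ and $\P_t$; this system is encoded by the map $\Psi$ of Section \ref{section correspondence}, with the linear-system constraint supplied by Theorem \ref{theorem relation moment linear system tropical}. Solutions of this system can appear or disappear only at values of $t$ where either (i) an edge length vanishes, producing a four-valent vertex, or (ii) a cycle of $\Gamma$ collapses to a pair of parallel edges joined by two new four-valent vertices. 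These are the same two combinatorial walls that govern the proof of Theorem \ref{theorem point invariance}, the marked-point-on-vertex wall having been removed by our choice of $\P_t$.

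The third step is to verify that $\Lambda_\Gamma^\Sigma m_\Gamma^q$ is preserved across each wall. For wall (i) the analysis is local at the new four-valent vertex and reduces, through the Block-G\"ottsche quantum identity, to the standard toric wall-crossing check for the refined vertex multiplicities; the factor $\Lambda_\Gamma^\Sigma$ is unchanged because an edge collapse disjoint from $\Sigma$ leaves $H_1(\Sigma,\ZZ)$ fixed, while an edge collapse inside $\Sigma$ amounts to a change of cycle basis preserving the sublattice index. For wall (ii) one must match the global data on either side: the two exchanged parallel edges carry the same moment, because their difference is the moment of the cycle that is contracted, which is contractible in $\TT A$ and hence zero by the balancing condition, so the linear-system constraint passes through the wall; one then compares $\Lambda_\Gamma^\Sigma$ on the two sides by tracking the cycle basis of $\Sigma$ through the degeneration.

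The hard part will be wall (ii): because the combinatorial type of $\Sigma$ changes globally at this wall, invariance of $\Lambda_\Gamma^\Sigma m_\Gamma^q$ is not a purely local matter and must be reconciled with the change of $H_1(\Sigma, \ZZ) \subset \Lambda$. This is morally parallel to the role that $\Lambda_\Gamma^\Sigma$ plays in Theorem \ref{theorem point invariance} at the marked-point-on-vertex wall, but the degeneration is combinatorially distinct, and the matching of cycle indices has to be verified explicitly, using the structural description of solutions from Proposition \ref{proposition form of solutions} together with the moment formula of Theorem \ref{theorem relation moment linear system tropical}. Once this local invariance is established, connectedness of the generic locus of surfaces and iteration along a path yield the claimed global invariance.
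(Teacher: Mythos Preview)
Your plan misses the central wall that the paper's proof of Theorem \ref{theorem surface invariance} is built around. The locus of non-generic $S$ --- those for which some class $C'$ not proportional to $C_0$ satisfies $C'S^T\in\S_2(\RR)$ --- is a countable union of hyperplanes in the constrained moduli of surfaces, hence has codimension one, and a generic path must cross it. At such a value $t_*$ the class $C$ splits as $C_1+C_2$ with both summands realizable, and irreducible solutions for $t$ near $t_*$ limit to \emph{reducible} curves: a disjoint union $\Gamma_1\sqcup\Gamma_2$ together with a choice of which intersection points in $\TT A_{t_*}$ to smooth. Combinatorially this forces several edges of $\Gamma$ to have length zero simultaneously, so it is not an instance of your wall (i) (a single edge collapse) or (ii) (a single cycle collapse); it is a genuinely new event that your classification does not see. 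The paper disposes of it by observing that for each reducible limit and each subset of nodes, there is exactly one smoothing on either side of $t_*$, with the same $\delta_\Gamma$, $\Lambda_\Gamma^\Sigma$ and $m_\Gamma^q$.

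The paper's method is also structurally different from yours: rather than analyzing walls directly in $\TT A_t$, it invokes the cutting procedure from the first paper to replace curves in $\TT A_t$ by curves in $N_\RR$ with moment constraints on pairs of unbounded ends (encoding both the gluing and, via Theorem \ref{theorem relation moment linear system tropical}, the linear system). This reduces the problem to a planar one where the only remaining wall is reducibility. Your direct approach could in principle be made to work, but you would still have to insert the reducibility analysis, and you would also need to justify more carefully that $\Lambda_\Gamma^\Sigma$ is constant across a quadrivalent-vertex wall occurring inside $\Sigma$ --- your one-line claim that this ``amounts to a change of cycle basis'' is not obvious, since the three adjacent combinatorial types carry three a priori different genus-$2$ subgraphs.
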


\begin{rem}
The refinement of the vertex multiplicities is a standard trick for tropical curves. The term $\delta_\Gamma$ could be replaced with anything and we would still have invariance since we have invariance for the counts of curves with fixed gcd. It would be interesting to find a way to also refine the term $\Lambda_\Gamma^\Sigma $.
\end{rem}

\section{Proof of the results}

\subsection{Proof of the correspondence theorem}
\label{section proof correspondence}

The proof follows the steps of the proof of the correspondence theorem from \cite{nishinou2020realization}. We refer to it for more details, as we prefer to focus on the changes that we need to make.

\medskip

We consider the family of abelian surfaces $\CC A_t$, completed with the central fiber as $t$ goes to $\infty$. The central fiber $\CC A_\infty$ is a union of toric surfaces that meet along their toric divisors. This family of surfaces is obtained as follows. Let $\Xi$ be the subdivision of $N_\RR$ induced by the periodic lifts of all the tropical curves solution to the enumerative problem. Then, consider the fan in $N_\RR\times\RR_+$ over the subdivision of $\Xi\times\{1\}$. This fan possesses an action of $\Lambda$ by translation, which induces an action on the almost toric variety associated to the fan. The projection onto the last coordinate makes it into a family $\CC A_t$ with central fiber $\CC A_\infty$ as described. See the first paper or \cite{nishinou2020realization} for more details. The proof of the correspondence theorem uses log-geometry, and is as follows. First, we find the prelog curves solution to the enumerative problem. These are certain nodal curves in $\CC A_\infty$ that satisfy specific conditions. After transforming them into log-curves, we see that there is a unique way to deform them into solutions inside the abelian surfaces $\CC A_t$.

\begin{proof}[Proof of Theorem \ref{theorem correspondence}]
\begin{itemize}[label=$\circ$]
\item Let $h:\Gamma\to\TT A$ be a tropical curve solution to the tropical enumerative problem. Let $\Gamma'$ be $\Gamma$ with additional bivalent vertices at the place of marked points, and $\widehat{\Gamma}$ the curve with the subdivision induced by the subdivision $\Xi$ of $N_\RR$. We have the following maps:
{ \everymath={\displaystyle}
$$\begin{array}{rccl}
F: & \bigoplus_{V\in V(\Gamma)}N  & \longrightarrow & \bigoplus_{E\in E(\Gamma)}N/N_e, \\
\Psi: & \bigoplus_{V\in V(\Gamma')}N & \longrightarrow & \bigoplus_{E\in E(\Gamma')}N/N_e\oplus\bigoplus_1^{g-2}N\oplus\Lambda^*, \\
\widehat{F}: & \bigoplus_{V\in V(\widehat{\Gamma})}N  & \longrightarrow & \bigoplus_{E\in E(\widehat{\Gamma})}N/N_e, \\
\widehat{\Psi}: & \bigoplus_{V\in V(\widehat{\Gamma})}N & \longrightarrow & \bigoplus_{E\in E(\widehat{\Gamma})}N/N_e\oplus\bigoplus_1^{g-2}N\oplus\Lambda^*. \\
\end{array}$$
}

The maps $F$ and $\widehat{F}$ are already introduced in \cite{nishinou2020realization}, while the maps $\Psi$ and $\widehat{\Psi}$ play the role analog to $G$ and $\widehat{G}$ from \cite{nishinou2020realization}. The definition of these map is as the definition of $\Psi$ in Section \ref{section correspondence}. The domain of $F$ is of rank $2(2g-2)=4g-4$, while its codomain is of rank $3g-3$.Thus, we have that
$$\dim\ker F_\RR=g-1+\dim\mathrm{Coker}F_\RR.$$
Furthermore, we have $\dim\ker F_\RR=g$ and $\dim\mathrm{Coker}F_\RR=1$. In fact, the vector space $\ker F_\RR$ describes the infinitesimal deformations of the curve $h:\Gamma\to\TT A$, and its dimension was already computed to be $g$.

\item Recall that a prelog curve is a map $\varphi_\infty:\CCC_\infty\to\CC A_\infty$ from a nodal curve $\CCC_\infty$ to the central fiber $\CC A_\infty$ that satisfies some conditions. Every component $\CCC_V$ of $\CCC_\infty$ is a mapped to some irreducible component $\CC A_V$ of $\CC A_\infty$. As a curve inside the toric variety $\CC A_V$, $\CCC_V$ needs to be torically transverse. Moreover, for each intersection point with the toric boundary of $\CC A_V$ with tangency order $k$, let $\CC A_W$ be the irreducible component of $\CC A_\infty$ that shares the same divisor, then there is an irreducible component $\CCC_W$ of $\CCC_\infty$ mapped to $\CC A_W$ having an intersection point of the same tangency order $k$ at the same point.

\medskip

According to Proposition 5.3 of \cite{nishinou2020realization}, if $\varphi_\infty:\CCC_\infty\to\CC A_\infty$ is a prelog curve associated with the tropical curve $h:\Gamma\to\TT A$ and is equipped with a log-structure, then the logarithmic normal sheaf $\N=\varphi_\infty^*\Theta_{\CC A_\infty}/\Theta_{\CCC_\infty}$ (where $\Theta_\bullet$ denotes the logarithmic tangent sheaf) has derived global section $R\Gamma(\N)$ isomorphic to
$$\bigoplus_{V\in V(\Gamma)}N_\CC\to\bigoplus_{e\in E(\Gamma)}(N/N_e)_\CC.$$
In particular, $H^0(\CCC_\infty,\N)\simeq\ker F_\CC$ and $H^1(\CCC_\infty,\N)\simeq\mathrm{Coker}F_\CC$. This sheaf describes the deformations of the curve.

\item The vector space $\ker F_\RR$ describes infinitesimal deformations of $h:\Gamma\to\TT A$. As the curves solution to the enumerative problem are rigid, the map
$$\ker F_\RR\to\bigoplus_1^{g-2} N\oplus\Lambda^*_\RR,$$
is injective. As the spaces are of the same dimension, it follows that it is an isomorphism. We then deduce that
$$\ker \widehat{F}_\RR\to\bigoplus_1^{g-2} N\oplus\Lambda^*_\RR,$$
is surjective.

\item We have the point constraints $\P_t\subset\CC A_t$. They have been chosen so that each point $p_t^i\in\CC A_t$ has a limit $p_\infty^i\in\CC A_\infty$ that belongs to the main strata of some irreducible component $\CC A_V$. We also have the fixed linear system. We now look at the set $\SSS_\mathrm{prelog}$ of isomorphism classes of prelog curves $\varphi_\infty:\CCC_\infty\to\CC A_\infty$ associated with the fixed tropical solution $h:\Gamma\to\TT A$ that are solution to the enumerative problem. We now show that there are $|\ker\Psi_{\CC^*}|$ such curves. In fact, this set is a $\ker\Psi_{\CC^*}$-torsor.

\medskip

To prove this, we can proceed as in the proof of Proposition 5.7 in \cite{nishinou2020realization}. Alternatively, the data of a genus $g$ prelog-curve in $\SSS_\mathrm{prelog}$ consists in the data of a rational curve $\CCC_V$ in the component $\CC A_V$ attached to every vertex $V$ of $\Gamma$. To be of the right genus, these curve have to satisfy the following conditions:
	\begin{itemize}[label=-]
	\item If $V$ is a bivalent vertex, the curve $\CCC_V$ is rational and has exactly two intersection points with the toric boundary.
	\item If $V$ is a trivalent vertex, the curve $\CCC_V$ is rational and has exactly three intersection points with the toric boundary.
	\end{itemize}
	
\begin{rem}
Notice that in a component $\CC A_V$ associated with a ``quadrivalent vertex" of $\Gamma$ resulting from the crossing of two branches of $\Gamma$, we have exactly two components $\CCC_V$ and $\CCC_{V'}$ inside $\CC A_V=\CC A_{V'}$. Each of them is a rational curve that has exactly two intersection points with the toric boundary.
\end{rem}	

Moreover, the curves $\CCC_V$ have to satisfy the following conditions:
	\begin{itemize}[label=-]
	\item Two curves $\CCC_V$ and $\CCC_W$ for adjacent vertices $V$ and $W$ have the same intersection point with the toric divisor shared by $\CC A_V$ and $\CC A_W$, so that we have indeed a prelog curve.
	\item The curve $\CCC_{V_i}$ in the component $\CC A_{V_i}$ associated to a marked point $p^i$ passes through the limit point $p_\infty^i$.
	\item Concerning the linear system condition, we use Theorem \ref{theorem relation moment linear system complex}, that imposes the following constraint. Recall that we have chosen a basis $(\lambda_1,\lambda_2)$ of $\Lambda$, that are represented by loops $(\gamma_1,\gamma_2)$ inside $\TT A$. Up to deformation, we can assume that both loops are transverse to $\Gamma$, and that the intersection points belong to the subdivision $\Xi$, up to a refinement. Then, each intersection point between $\gamma_1$ or $\gamma_2$ and $\Gamma$ occurs at a bivalent vertex $Q$ associated to a rational curve $\CCC_Q$. Theorem \ref{theorem relation moment linear system complex} expresses the linear system condition as a condition on the sum of certain integrals over specific cycles inside the curve. In our case, these cycles belong to the components $\CCC_Q$ for $Q\in\Gamma\cap\gamma_i$. Morover, we can assume that $\CC A_Q\simeq\CC P^1\times\CC P^1$, and $\CCC_Q$ is the image of $z\mapsto(\alpha,z^w)\in(\CC P^1)^2$ for some $\alpha$ and $w$. Then, the above integral involved in Theorem \ref{theorem relation moment linear system complex} is just $\alpha^w$.
	\end{itemize}

\begin{rem}
This last point is the main difference with the proof from \cite{nishinou2020realization}: it replaces two of the point conditions by some other conditions involving positions of the components of the curves. It here intervenes when looking for the prelog curves solution to the problem, and it also intervenes when we deform them into true solutions in the last step of the proof.
\end{rem}

In each $\CC A_V$ associated to a trivalent vertex, as the curve $\CCC_V$ is rational of fixed degree with three intersection points with the toric boundary, it is unique up to multiplication by an element of $N_{\CC^*}$. Similarly for bivalent vertices and curves $\CCC_{V_i}$ associated to marked points, while for the other bivalent vertices, curves are unique up to multiplication by an element of $(N/N_e)_{\CC^*}$. Thus, the space of all curves is parametrized by $\bigoplus_{V\in V(\Gamma')}N_{\CC^*}$. Using Theorem \ref{theorem relation moment linear system complex}, the compatibility, point and linear system conditions mean that the point corresponding to a choice of $(\CCC_V)_V$ is mapped to a chosen $(1,p_\infty^i,l)$ by $\Psi_{\CC^*}$, hence the result.

\item Keeping the same notations, let $\varphi_\infty:\CCC_\infty\to\CC A_\infty$ be a given prelog curve of $\SSS_\mathrm{prelog}$ endowed with a log-structure. The set of sections $H^0(\CCC_\infty,\N)$ of the normal sheaf is identified with $\ker F_\CC$, \textit{i.e.} a section is determined by its restriction to each irreducible component $\CCC_V$ of $\CCC_\infty$. Furthermore, each restriction consists in the data of a vector in $N_\CC$ for every vertex of $\Gamma'$, or in $(N/N_e)_\CC$ for other bivalent vertices, and these vectors have to satisfy some compatibility condition: they belong to $\ker F_\CC$. Thus, the map
$$\Phi:H^0(\CCC_\infty,\N)\to \bigoplus_1^{g-2} \Theta_{\CC A_\infty}(p_i)/\varphi_{\infty*}\Theta_{\CCC_\infty}(p_i) \oplus\Lambda_{\CC}^*,$$
that maps a section of the log-normal sheaf $\N$ to its evaluation at the marked points, and the variation of the integrals induced by Theorem \ref{theorem relation moment linear system complex} that it induces ($\Lambda_\CC^*$ is indeed the tangent space to the Picard group), is an isomorphism: it is identified with $\ker F_\CC\to\bigoplus_1^{g-2} \Theta_{\CC A_\infty}(p_i)/\varphi_{\infty*}\Theta_{\CCC_\infty}(p_i) \oplus\Lambda_{\CC}^*$, which has been proven to be bijective.

\item Using Proposition 7.1 from \cite{nishinou2006toric}, for each prelog curve in $\SSS_\mathrm{prelog}$, there are precisely $\prod_{e\in E(\Gamma')}w_e$ possible log-structures on it.

\item To conclude, we show that it is possible to deform any log-curve to a nearby solution. Let $\varphi_\infty:\CCC_\infty\to\CC A_\infty$ is a log-curve that matches the constraints. In particular, $\CCC_\infty$ is a curve over $\CC$. Let $\Bbbk_n=\CC\left[\frac{1}{t}\right]/\left(\frac{1}{t^{n+1}}\right)$. It is the ring of germs of functions up to order $n$ near $\infty$. Then for any $n$ there exists a unique lift $\varphi_n:\CCC^{(n)}\to\CC A_t$ of $\varphi_\infty:\CCC_\infty\to\CC A_\infty$ on $\Bbbk_n$. This is proved by induction.
	\begin{itemize}[label=-]
	\item The fact holds for $n=0$ because the curve $\varphi_\infty:\CCC_\infty\to\CC A_\infty$ is a solution over $\Bbbk_0$.
	\item If a lift $\varphi_n:\CCC^{(n)}\to\CC A_t$ over $\Bbbk_n$ has been constructed, \cite{nishinou2020realization} ensures that the set of lifts over $\Bbbk_{n+1}$ is non-empty and is a torsor over $H^0(\CCC_\infty,\N)$. Using the constraints and the fact that $\Phi$ is bijective shows that there exists a unique lift that matches the constraints, finishing the proof.
	\end{itemize}
\end{itemize}
\end{proof}

\begin{rem}
In a sense, the proof relies on Hensel's lemma, as we show that we can find and count order $0$ solutions to the enumerative problem, and that there is a unique way to deform each of them. The non-degeneracy condition is ensured by the bijectivity of the evaluation map.
\end{rem}

\begin{rem}
One other way to proceed would be to adapt the proof from \cite{shustin2002patchworking}. As the linear system is fixed, any curve is the zero-locus of a section of the associated line bundle that is of the form $\theta^t(z)=\sum_{m_0}a_{m_0}(t)\theta^t_{m_0}(z)$. As we have seen that the tropical limit of $\theta_{m_0}^t$ is the tropical $\theta$-function $\Theta_{m_0}$, we can do as in \cite{shustin2002patchworking}, considering the $\theta$-function as monomials. The tropical geometry part recovers the order $0$ solutions, and a use of the implicit function theorem (or equivalently Hensel's lemma) yields solutions for $t\neq 0$.
\end{rem}

\subsection{Multiplicity formula}

\begin{proof}[Proof of Theorem \ref{theorem multiplicity formula}]
The idea is to compute $|\ker\Psi_{\CC^*}|$ in terms of the matrix of $\Psi$. As in the case of the first paper, taking bases of the domain and codomain, we have a short exact sequence
$$0 \to \ZZ^{6g-8}\xrightarrow{\Psi}\ZZ^{6g-7}\to \ZZ\oplus G\to 0,$$
where $G$ is a finite abelian group. Tensoring with $\CC^*$, we have that $\ker\Psi_{\CC^*}\simeq \mathrm{Tor}(G,\CC^*)$, whose cardinal is computed as the gcd of the maximal minors of the matrix of $\Psi$. Thus, we compute the gcd of the maximal minors of $\Psi$. As there is a relation among the coordinates of $\bigoplus_e N/N_e$, the only nonzero maximal minors are obtained by forgetting one of these coordinates.

Let $e_0$ be an edge of $\Gamma'$. As usual, we compute the determinant by making successive developments with respect to rows and columns of the determinant.
\begin{itemize}[label=$\circ$]
\item First, we develop with respect to the rows $\bigoplus_1^{g-2}N$ of the evaluation at the marked points. Each marked point brings a block
$$\begin{array}{|c|}
\hline
\det(u_e,-) \\
\hline
I_2 \\
\hline
\det(u_e,-) \\
\hline
\end{array},$$
where columns are the coordinates of the marked point, middle rows coordinates of its evaluation, and the other rows evaluation for the adjacent edges. The middle rows are the only non-zero elements in these rows. Thus, we can develop with respect to them.
\item The complement of the marked point is a genus $2$ graph with branches attached to it. The leafs of these branches correspond to the marked points. The first step was to get rid of the marked points. The next step is to get rid of the branches. Let $V$ be a vertex adjacent to two leafs. Let $u_1,u_2$ and $u_e$ be the primitive slopes of the adjacent edges. The columns of $V$ bring a block
$$\begin{array}{|c|}
\hline
\det(u_e,-) \\
\hline
\det(u_1,-) \\
\hline
\det(u_2,-) \\
\hline
\end{array}.$$
The last two rows are the only non-zero elements in the rows. Thus, we can develop with respect to these rows, thus pruning the branch. We get the determinant for the graph where the vertex $V$ has been deleted replaced by an open edge, multiplied by $\det(u_1,u_2)=\frac{m_V}{w_1w_2}$.
\item In the first paper, for curves passing through $g$ points, the previous step was enough to completely compute the determinant: as the complement of the marked point is a tree, it is completely pruned. Here, this step is not enough because the complement of the marked points is not a tree. This is emphasized by the fact that at some point, we have to develop with respect to the rows corresponding to $\Lambda^*$. These rows are obtained by looking at the intersection between $\Gamma$ and two loops realizing a basis of $H_1(\TT A,\ZZ)\simeq\Lambda$. Each of these loops intersect various edges of $\Gamma$. For each intersected edge, we have two possibilities:
	\begin{itemize}[label=-]
	\item If the edge belongs to a branch of $\Gamma\backslash\P$, it has been pruned. Thus, the columns have been deleted during the second step.
	\item If the edge belongs to the genus $2$ subgraph, let $V$ be one of its vertices, \textit{i.e.} one that is not adjacent to a branch. We deform the loops so that their intersection points with the subgraph all belong to the edges immediately adjacent to $V$. This is possible by shifting the intersection point along the edges of the subgraph, and deforming the loop, remaining close to $\Gamma$. This may add intersection points with the edges immediately adjacent to the genus $2$ subgraph, but as their columns have been deleted in the previous step, this does not add new coefficients.
	\end{itemize}
Then, using the particular form presented above, the only vertex that has non-zero coordinates in the $\Lambda^*$ component is $V$. Thus, we can develop with respect to the last two rows.

The block that appears is corresponds to a map $N\to\Lambda^*$. Assume that $\Sigma$ is a Theta graph (The dumbbell case is treated similarly) Let $E_1$ and $E_2$ be the two edges adjacent to $V$ that contain intersection points between $\Sigma$ and the paths $\lambda_1$, $\lambda_2$. They give two loops $\gamma_1$ and $\gamma_2$ that span $H_1(\Sigma,\ZZ)$: $\gamma_1$ is the unique loop that contain $E_1$ and not $E_2$. Let $u_i$ be the slope of $u_i$. Let $(e_1,e_2)$ be a basis of $N$. Moving $V$ in the direction $e_i$ changes the moment of the edge $E_j$ by $\det(u_j,e_i)$. Moreover, the path $\lambda_k$ has $\gamma_j\cdot\lambda_k$ intersection points with the edge $E_j$. Thus, the matrix is
\begin{align*}
 & \begin{pmatrix}
(\gamma_1\cdot\lambda_1)\det(u_1,e_1)+(\gamma_2\cdot\lambda_1)\det(u_2,e_1) & (\gamma_1\cdot\lambda_1)\det(u_1,e_2)+(\gamma_2\cdot\lambda_1)\det(u_2,e_2) \\
(\gamma_1\cdot\lambda_2)\det(u_1,e_1)+(\gamma_2\cdot\lambda_2)\det(u_2,e_1) & (\gamma_1\cdot\lambda_2)\det(u_1,e_2)+(\gamma_2\cdot\lambda_2)\det(u_2,e_2)
\end{pmatrix} \\
= & \begin{pmatrix}
\gamma_1\cdot\lambda_1 & \gamma_2\cdot\lambda_1 \\
\gamma_1\cdot\lambda_2 & \gamma_2\cdot\lambda_2 \\
\end{pmatrix}\begin{pmatrix}
\det(u_1,e_1) & \det(u_1,e_2) \\
\det(u_2,e_1) & \det(u_2,e_2) \\
\end{pmatrix}.
\end{align*}
The determinant of the first matrix is $[H_1(\Sigma,\ZZ):H_1(\TT A,\ZZ)]=\Lambda_\Gamma^\Sigma$, and the second is $m_V$. Thus, the determinant id multiplied by $\Lambda_\Gamma^\Sigma m_V $, and the vertex $V$ is deleted.

\item After the previous step, the graph has be opened at the vertex $V$, and we can now apply the second step to prune the remaining vertices. In the end, we get
$$\frac{w_{e_0}}{\prod_{e\in E(\Gamma')}w_e}\Lambda_\Gamma^\Sigma \prod_V m_V.$$
Taking the gcd and multiplying by $\prod_{e\in E(\Gamma')}w_e$ yields the result.
\end{itemize}

\end{proof}

\subsection{Independence of the choice of the points}

\begin{proof}[Proof of Theorem \ref{theorem point invariance}]
Up to translation, we can assume that the line bundle is fixed once and for all. Thus, we only have to move the point configuration $\P$. Moreover, we can move the points one at a time. If the point configuration is chosen generically, the curves are simple, and a small deformation of the point configuration translates to a small deformation of the curve. 

Assume a unique point $p$ moves. In the case of curves passing through $g$ points, the deformation of the curve is obtained by moving the unique cycle passing through the moving point and not the other marked points. Here, there are several cycles passing through the moving point and avoiding the other marked points since $\Gamma\backslash\P$ contains a subgraph $\Sigma$ of genus $2$. However, these deformations may change the linear system.

Let $\Sigma_p$ be the smallest connected subgraph of $\Gamma$ containing $\Sigma$ and $p$. It is of genus $3$, and the map $H_1(\Sigma_p,\ZZ)\to H_1(\TT A,\ZZ)$ has a rank $1$ kernel. The deformation is here obtained by deforming the unique cycle that realizes the zero class inside $H_1(\TT A,\ZZ)\simeq\Lambda$. We have the following possible walls that can occur:
\begin{itemize}[label=$\circ$]
\item A quadrivalent vertex appears. There are three adjacent combinatorial types, and we have local invariance as proven in \cite{itenberg2013block}.

\item A cycle is contracted to a segment: we have two quadrivalent vertices linked by two parallel edges. This is handled as in \cite{itenberg2013block}: the combinatorial type is the same on both sides of the wall, except the marked point has changed sides.

\item A marked point meets a trivalent vertex $V$. Usually, this kind of wall is handled by the fact that the complement of the marked points has no cycle, and thus the marked point can only belong to two out of the three edges adjacent to $V$ since one of them would yield a disconnected $\Gamma\backslash\P$: there are only two combinatorial types and they have the same multiplicity. This is where the new wall of our situation appears.

\begin{itemize}[label=-]
\item First, if the trivalent vertex $V$ belongs to a branch of $\Gamma\backslash\P$, \textit{i.e.} does not belong to $\Sigma$, then there are only two out of the three adjacent combinatorial types that lead to solutions since for the third the complement of the marked points is not connected, as explained above.

\item Now, assume a marked point meets a trivalent vertex $V$ belonging to the distinguished genus $2$ subgraph. There are three adjacent combinatorial types, and unlike the planar situation, or the case of curves passing through $g$ points, all three might provide solutions. In this case, as $\delta_\Gamma$ and $m_\Gamma^q$ are the same for all adjacent combinatorial types, the invariance comes from the factor $\Lambda_\Gamma^\Sigma $.

Let $\Sigma_0$ be the subgraph of $\Gamma$ where we have removed $\P$ except the marked points that merges with $V$, and $\Sigma_1$, $\Sigma_2$ and $\Sigma_3$ the three genus $2$ subgraphs where we have removed one of the edge adjacent to $V$. We have inclusions $\Sigma_i\subset\Sigma_0\subset\Gamma$, giving injections
$$H_1(\Sigma_i,\ZZ)\hookrightarrow H_1(\Sigma_0,\ZZ)\hookrightarrow H_1(\Gamma,\ZZ).$$
As these $\Sigma_i$ is a subgraph of $\Sigma_0$, the map $H_1(\Sigma_i,\ZZ)\hookrightarrow H_1(\Sigma_0,\ZZ)$ is an inclusion, but its image is in fact a saturated sublattice: there exists $\varphi_i\in H^1(\Sigma_0,\ZZ)$ such that $H_1(\Sigma_i,\ZZ)=\ker\varphi_i\subset H_1(\Sigma_0,\ZZ)$. The cocycle $\varphi_i$ is in fact the map that measures the flow passing through the $i$-th edge adjacent to $V$. In particular, with a suitable choice of sign, we have the relation $\varphi_1+\varphi_2+\varphi_3=0$. This can be interpreted as a relation between the Pl\"ucker vectors of the sublattices $H_1(\Sigma_i,\ZZ)$ inside $H_1(\Sigma_0,\ZZ)$. Taking the other Pl\"ucker vector, we obtain the following relation:
$$\alpha_1\wedge\beta_1+\alpha_2\wedge\beta_2+\alpha_3\wedge\beta_3=0,$$
where $\alpha_i$ and $\beta_i$ are a basis of $H_1(\Sigma_i,\ZZ)$. Then, the index $\Lambda_\Gamma^{\Sigma_i}$ is obtained by taking $|\det(\alpha_i,\beta_i)|$, where $\alpha_i$ and $\beta_i$ are by abuse of notation the classes they represent inside $H_1(\TT A,\ZZ)\simeq\Lambda$. Thus, we have a relation
$$\Lambda_\Gamma^{\Sigma_1}+\Lambda_\Gamma^{\Sigma_2}=\Lambda_\Gamma^{\Sigma_3},$$
up to a relabeling of the edges. This relation corresponds indeed to the invariance of the tropical count near the wall.
\end{itemize}
\end{itemize}
\end{proof}

\begin{rem}
For the new kind of wall, the relation between the lattice indices $\Lambda_\Gamma^{\Sigma_i}$ can be refined in the following way: the intersection $\ker\varphi_1\cap\ker\varphi_2\cap\ker\varphi_3\subset H_1(\Sigma_0,\ZZ)$ is of rank $1$. It means that we can choose a cycle $\alpha\in H_1(\Sigma_0,\ZZ)$ that does not contain the vertex $V$. Then, we complete $\alpha$ with $\beta_i$ into a basis of $H_1(\Sigma_i,\ZZ)$. The invariance then comes from some relation $\beta_1+\beta_2=\beta_3$ up to $\alpha$.
\end{rem}

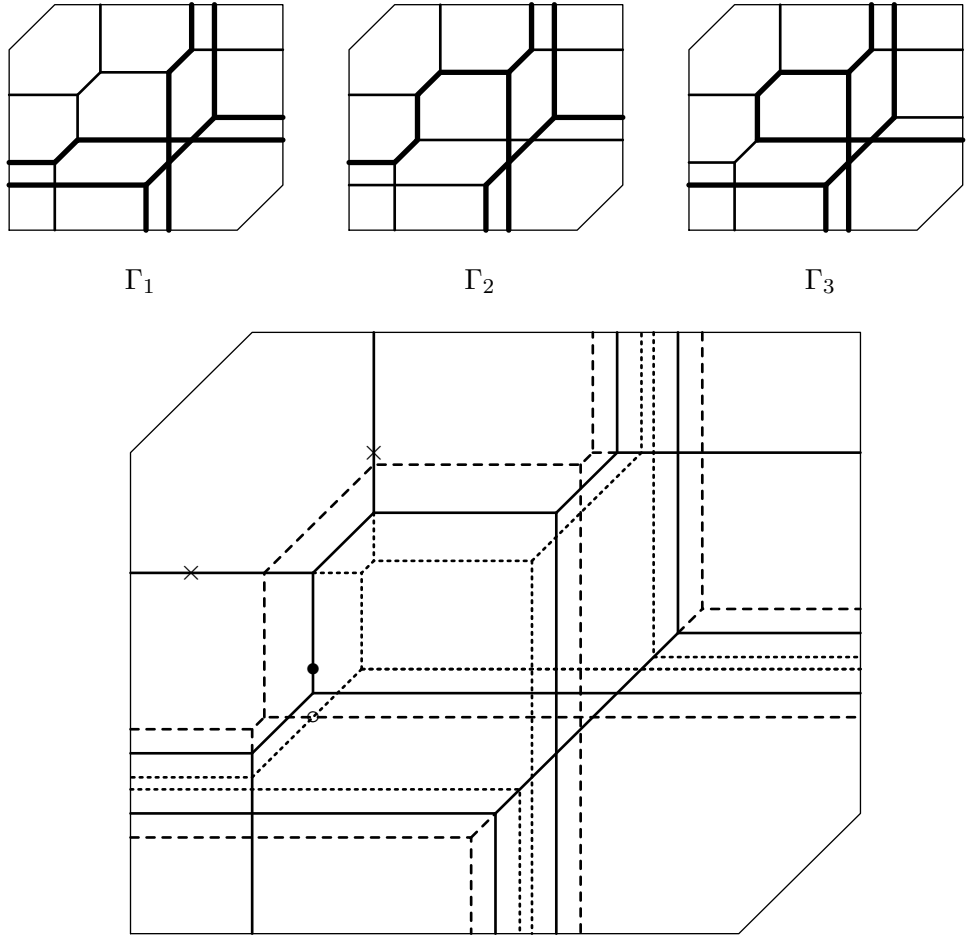
\begin{figure}[h]
\begin{center}
\begin{tabular}{ccc}
\begin{tikzpicture}[line cap=round,line join=round,>=triangle 45,x=0.3cm,y=0.3cm]
\clip(-1,-1) rectangle (12.5,10.5);
\draw [line width=0.5pt] (0,0)--++ (10,0)--++ (2,2)--++ (0,8)--++ (-10,0)--++ (-2,-2)--++ (0,-8);

\draw [line width=2pt] (0,3)--++ (2,0)--++ (1,1)--++ (2,0)--++ (7,0);
\draw [line width=2pt] (0,2)--++ (6,0)--++ (3,3)--++ (3,0);
\draw [line width=1pt] (0,6)--++ (3,0)--++ (1,1)--++ (3,0);
\draw [line width=1pt] (8,8)--++ (4,0);
\draw [line width=2pt] (6,0)--++ (0,2);
\draw [line width=2pt] (7,0)--++ (0,7);
\draw [line width=2pt] (8,8)--++ (0,2);
\draw [line width=1pt] (4,7)--++ (0,3);
\draw [line width=2pt] (7,7)--++ (1,1);
\draw [line width=2pt] (9,5)--++ (0,5);
\draw [line width=1pt] (2,0)--++ (0,3);
\draw [line width=1pt] (3,4)--++ (0,2);
\end{tikzpicture} &
\begin{tikzpicture}[line cap=round,line join=round,>=triangle 45,x=0.3cm,y=0.3cm]
\clip(-1,-1) rectangle (12.5,10.5);
\draw [line width=0.5pt] (0,0)--++ (10,0)--++ (2,2)--++ (0,8)--++ (-10,0)--++ (-2,-2)--++ (0,-8);

\draw [line width=2pt] (0,3) --++ (2,0) --++(1,1) --++ (0,2) --++(1,1)--++(3,0);
\draw [line width=1pt] (3,4)--++ (9,0);
\draw [line width=1pt] (0,2)--++ (6,0);
\draw [line width=2pt] (6,2)--++ (3,3)--++ (3,0);
\draw [line width=1pt] (0,6)--++ (3,0)--++ (1,1)--++ (3,0);
\draw [line width=1pt] (8,8)--++ (4,0);
\draw [line width=2pt] (6,0)--++ (0,2);
\draw [line width=2pt] (7,0)--++ (0,7);
\draw [line width=2pt] (8,8)--++ (0,2);
\draw [line width=1pt] (4,7)--++ (0,3);
\draw [line width=2pt] (7,7)--++ (1,1);
\draw [line width=2pt] (9,5)--++ (0,5);
\draw [line width=1pt] (2,0)--++ (0,3);
\draw [line width=1pt] (3,4)--++ (0,2);
\end{tikzpicture} &
\begin{tikzpicture}[line cap=round,line join=round,>=triangle 45,x=0.3cm,y=0.3cm]
\clip(-1,-1) rectangle (12.5,10.5);
\draw [line width=0.5pt] (0,0)--++ (10,0)--++ (2,2)--++ (0,8)--++ (-10,0)--++ (-2,-2)--++ (0,-8);

\draw [line width=1pt] (0,3) --++ (2,0) --++(1,1);
\draw [line width=2pt] (3,4)--++ (0,2) --++(1,1)--++(3,0);
\draw [line width=2pt] (3,4)--++ (9,0);
\draw [line width=2pt] (0,2)--++ (6,0);
\draw [line width=2pt] (6,2)--++ (3,3);
\draw [line width=1pt] (9,5)--++ (3,0);
\draw [line width=1pt] (0,6)--++ (3,0)--++ (1,1)--++ (3,0);
\draw [line width=1pt] (8,8)--++ (4,0);
\draw [line width=2pt] (6,0)--++ (0,2);
\draw [line width=2pt] (7,0)--++ (0,7);
\draw [line width=2pt] (8,8)--++ (0,2);
\draw [line width=1pt] (4,7)--++ (0,3);
\draw [line width=2pt] (7,7)--++ (1,1);
\draw [line width=2pt] (9,5)--++ (0,5);
\draw [line width=1pt] (2,0)--++ (0,3);
\draw [line width=1pt] (3,4)--++ (0,2);
\end{tikzpicture} \\
$\Gamma_1$ & $\Gamma_2$ & $\Gamma_3$ \\
\end{tabular}

\begin{tikzpicture}[line cap=round,line join=round,>=triangle 45,x=0.8cm,y=0.8cm]
\clip(-1,-1) rectangle (12.5,10.5);
\draw [line width=0.5pt] (0,0)--++ (10,0)--++ (2,2)--++ (0,8)--++ (-10,0)--++ (-2,-2)--++ (0,-8);

\draw [line width=1pt] (0,3)--++ (2,0)--++ (1,1)--++ (2,0)--++ (7,0);
\draw [line width=1pt] (0,2)--++ (6,0)--++ (3,3)--++ (3,0);
\draw [line width=1pt] (0,6)--++ (3,0)--++ (1,1)--++ (3,0);
\draw [line width=1pt] (8,8)--++ (4,0);
\draw [line width=1pt] (6,0)--++ (0,2);
\draw [line width=1pt] (7,0)--++ (0,7);
\draw [line width=1pt] (8,8)--++ (0,2);
\draw [line width=1pt] (4,7)--++ (0,3);
\draw [line width=1pt] (7,7)--++ (1,1);
\draw [line width=1pt] (9,5)--++ (0,5);
\draw [line width=1pt] (2,0)--++ (0,3);
\draw [line width=1pt] (3,4)--++ (0,2);

\draw (1,6) node {$\times$};
\draw (4,8) node {$\times$};
\draw (3,4.4) node {$\bullet$};
\draw (3,3.6) node {$\circ$};

\draw [line width=1pt,dotted] (3.8,4.4) --++ (0,1.6) --++ (0.2,0.2)-- (6.6,6.2)--(6.6,0);
\draw [line width=1pt,dotted] (3.8,4.4) -- (2,2.6) -- (0,2.6);
\draw [line width=1pt,dotted] (3.8,4.4) -- (12,4.4);
\draw [line width=1pt,dotted] (0,2.4) -- (6.4,2.4) -- (6.4,0);
\draw [line width=1pt,dotted] (6.6,6.2) -- (8.4,8) --(8.4,10);
\draw [line width=1pt,dotted] (3.8,4.4) -- (12,4.4);
\draw [line width=1pt,dotted] (8.6,10) -- (8.6,4.6) -- (12,4.6);
\draw [line width=1pt,dotted] (3,6) -- (3.8,6);
\draw [line width=1pt,dotted] (4,7) -- (4,6.2);

\draw [line width=1pt,dashed] (0,3.4) -- (2,3.4) --++ (0.2,0.2) -- (12,3.6);
\draw [line width=1pt,dashed] (2,3.4)--(2,3) ;
\draw [line width=1pt,dashed] (2.2,3.6) -- (2.2,6) -- (4,7.8) -- (7.4,7.8) -- (7.6,8) -- (7.6,10) ;
\draw [line width=1pt,dashed] (7.6,8) -- (8,8) ;
\draw [line width=1pt,dashed] (7.4,7.8) -- (7.4,0) ;  
\draw [line width=1pt,dashed] (0,1.6) -- (5.6,1.6) -- (5.6,0) ;
\draw [line width=1pt,dashed] (5.6,1.6) -- (6,2) ;
\draw [line width=1pt,dashed] (9.4,10) -- (9.4,5.4) -- (12,5.4) ;
\draw [line width=1pt,dashed] (9,5) -- (9.4,5.4) ; 

\end{tikzpicture}

\caption{\label{figure new kind of wall} Three tropical curves adjacent to the same wall. The three tropical curves with their graph $\Sigma$ are depicted in the first row. Then, we draw all solutions on the same picture to see which solution is on which side of the wall.}
\end{center}
\end{figure}

\begin{expl}
On Figure \ref{figure new kind of wall} we have three tropical curves that are adjacent to the same wall. When moving the marked point ``$\bullet$" to the marked point ``$\circ$", the solutions keep being in the same combinatorial type. The deformation is obtained by moving the vertex near the marked point in the direction $\RR(2,1)$. The line of slope $(2,1)$ splits the edges adjacent to the vertex in accordance to the sides of the wall. One checks that we have the relation $\Lambda_\Gamma^{\Sigma_1}=\Lambda_\Gamma^{\Sigma_2}+\Lambda_\Gamma^{\Sigma_3}$: $4=2+2$.
\end{expl}

\subsection{Independence of the choice of the surface}

\begin{proof}[Proof of Theorem \ref{theorem surface invariance}]
We proceed similarly as for the invariance for the curves passing through $g$ points in the first paper. Let $C\in \Lambda\otimes N$ be a class. Up to a change of basis of both $N$ and $\Lambda$, we can assume that $C$ is of the form $\begin{pmatrix}
d & 0 \\
0 & dn \\
\end{pmatrix}=d C_0$. The condition on $S=\begin{pmatrix}
s_{11} & s_{12} \\
s_{21} & s_{22} \\ 
\end{pmatrix}$ is that $ns_{12}=s_{21}$. We choose a generic path $S_t=(s_{ij}(t))_{ij}$ between two generic choices of matrices $S_0$ and $S_1$ such that this relation is satisfied along the path. A matrix $S$ is said to be generic if
$$\{ C'\in \Lambda\otimes N \text{ such that }C'S^T\in\S_2(\RR)\}=\ZZ C_0.$$
The path $S_t$ might have to cross the set of non-generic matrices to go from $S_0$ to $S_1$. Let $\Lambda_t=\mathrm{Im}S_t$ and $\TT A_t=N_\RR/\Lambda_t$.

\medskip

In addition to the path of matrices, for each $t$ we can choose a configuration of points $\P_t$ such that $\P_t$ varies continuously. Moreover, we can assume that for each $t$, the configuration $\P_t$ is generic as a point configuration inside $\TT A_t$ so that we know that the curves passing through $\P_t$ are simple. Notice that the choice $\P_t$ is generic inside $\TT A_t$ even when the matrix $S_t$ is not generic. Such a path of configurations exists because for every matrix $S$, the set of generic configurations is a dense open subset.

\medskip

Using the cutting process from the first paper, curves inside $\TT A_t$ are obtained from curves inside $N_\RR$ satisfying some gluing condition: a condition on the sum of moments between pairs of unbounded ends. If we impose furthermore to pass through $\P_t$ and the belonging to a fixed linear system, the curves solutions to the enumerative problem are in bijection with the curves inside $N_\RR$ passing through some fixed points (the lift of $\P_t$) and whose ends satisfy some moment conditions (both for the gluing and the linear system using Theorem \ref{theorem relation moment linear system tropical}). When moving $t$, the solutions move. To prove the invariance, we only need to show that for each value $t_*$, on a neighborhood of $t_*$, the solutions to the enumerative problem deform and give refined counts that are equal.

\medskip

If a curve solution to the problem in $\TT A_{t_*}$ is irreducible, it is possible to deform it and get nearby solutions for nearby $t$. Otherwise, the curve is reducible and $\TT A_{t_*}$ is not generic. We get solutions by deforming the reducible solutions for $\TT A_{t_*}$, smoothing some of the intersection points between the irreducible components. Given a set of intersection points to smooth, we see that there are two ways to deform the curve, one giving a solution for $\TT A_{t_*-\varepsilon}$, and one for $\TT A_{t_*+\varepsilon}$. The multiplicity is the same for both deformations of the curve, yielding the invariance.
\end{proof}

\section{Example}

The computation of explicit values of the invariants requires to have a way of solving the enumerative. Such an algorithm is provided in the third installment of the series. Nevertheless, we give two examples.

\begin{figure}[h]
\begin{center}
\begin{tikzpicture}[line cap=round,line join=round,>=triangle 45,x=0.5cm,y=0.5cm]
\clip(-0.5,-0.5) rectangle (21,5);

\draw [line width=1pt] (0,0)-- ++(19.8,0) --++ (0.2,1.2)-- ++(0,2.8)-- ++(-19.8,0) --++ (-0.2,-1.2)-- ++(0,-2.8);

\draw [line width=1.5pt] (0,1.5)-- (4,1.5)-- ++(0.2,0.2)-- (9,1.7)-- ++(0.6,0.6)-- (16,2.3)-- ++(0.2,0.4)-- (20,2.7);

\draw [line width=1.5pt] (4,0)-- (4,1.5) ++(0.2,0.2)-- (4.2,4);
\draw [line width=1.5pt] (9,0)-- (9,1.7) ++(0.6,0.6)-- (9.6,4);
\draw [line width=1.5pt] (9.2,0)-- ++(0,4);
\draw [line width=1.5pt] (9.4,0)-- ++(0,4);
\draw [line width=2pt] (16,0)-- (16,2.3) ++(0.2,0.4)-- (16.2,4);

\draw (16.2,3.5) node[right] {$2$};

\end{tikzpicture}

\caption{\label{figure stretched curve} A curve of genus $4$ in a long hexagon. It has one horizontal loop, and three vertical loops. Two of them make only one vertical round while the middle one makes three.}
\end{center}
\end{figure}
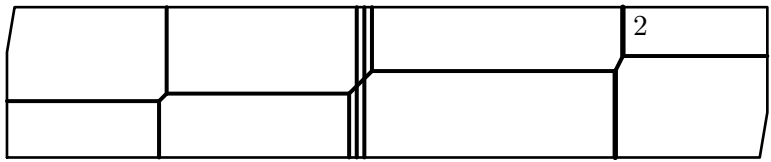

\begin{expl}
Assume that the class is $\begin{pmatrix}
1 & 0 \\
0 & n \\
\end{pmatrix}$ and that we are looking for curves of genus $g$. Assuming that the abelian surface is represented by a fundamental domain that is a long hexagon and that the points constraints are horizontally spread, all the curves are of the following form: a loop that goes around the horizontal direction, and $g-1$ loops that go around the vertical direction. Among the vertical loops, $g-2$ of them contain a marked point, and the position of the last one (denoted by $\infty$) is determined by the linear system condition. See Figure \ref{figure stretched curve} for an example of such a curve.

To choose such a curve, we need to dispatch the degree $n$ among the $g-1$ vertical loops. For a choice of dispatching $a_1+\cdots +a_{g-2}+a_\infty=n$, we have several possibilities. Choosing $k_i|a_i$, the vertical loop can make $k_i$ rounds around the vertical direction. Moreover, the $i$-th marked point has $k_i$ possible positions, and the loop has two vertices of multiplicity $\frac{a_i}{k_i}$. Concerning the unmarked vertical loop, it has $a_{\infty}$ possible positions, and contributes $k_\infty$ to $\Lambda_\Gamma^\Sigma$. Thus, we get
\begin{align*}
N_{g,(1,n)} = & \sum_{a_1+\cdots+a_{g-2}+a_\infty=n} a_\infty \sum_{k_i|a_i}k_i\left(\frac{a_i}{k_i}\right)^2 \\
 = & \sum_{a_1+\cdots+a_{g-2}+a_\infty=n} a_\infty^2\sigma_1(a_\infty)\prod_1^{g-2}a_i\sigma_1(a_i). \\
\end{align*}
We recover the result from \cite{bryan1999generating} giving the generating series of these numbers:
$$\sum_{n=1}^\infty N_{g,(1,n)}y^n = D^2G_2(y) DG_2(y)^{g-2},$$
where $G_2(y)$ is the Eisenstein series, and $D=y\frac{\dd}{\dd y}$. For the refined invariants, we get
\begin{align*}
BG_{g,(1,n)} = & \sum_{a_1+\cdots+a_{g-2}+a_\infty=n}a_\infty\sum_{k_i|a_i}k_i\left[\frac{a_i}{k_i}\right]^2. \\
\end{align*}
\end{expl}

\begin{figure}[h]
\begin{center}
\begin{tabular}{cccc}
 & \begin{tikzpicture}[line cap=round,line join=round,>=triangle 45,x=0.5cm,y=0.5cm]

\draw [line width=0.5pt] (0,0)-- ++(8,0) --++ (1,1)-- ++(0,6)-- ++(-8,0) --++ (-1,-1)-- ++(0,-6);

\draw [line width=2pt] (0,3)-- (3,3)-- ++(1,1)-- (9,4);
\draw [line width=2pt] (3,0)-- (3,3);
\draw [line width=2pt] (4,7)-- (4,4);
\draw (3,1.5) node[right] {$2$};
\draw (1.5,3) node[above] {$2$};
\draw (8.5,4) node[above] {$2$};
\draw (4,5.5) node[left] {$2$};
\draw (3.5,3.5) node[below,right] {$2$};
\end{tikzpicture} &
\begin{tikzpicture}[line cap=round,line join=round,>=triangle 45,x=0.5cm,y=0.5cm]

\draw [line width=0.5pt] (0,0)-- ++(8,0) --++ (1,1)-- ++(0,6)-- ++(-8,0) --++ (-1,-1)-- ++(0,-6);

\draw [line width=2pt] (0,3)-- (2.5,3);
\draw [line width=1pt] (2.5,3)-- ++(2,1);
\draw [line width=2pt] (4.5,4)-- (9,4);
\draw [line width=1pt] (2.5,0)-- (2.5,3);
\draw [line width=1pt] (4.5,7)-- (4.5,4);
\draw [line width=1pt] (3.5,0)-- (3.5,7);
\draw (1.25,3) node[above] {$2$};
\draw (7.25,4) node[below] {$2$};
\end{tikzpicture} &
\begin{tikzpicture}[line cap=round,line join=round,>=triangle 45,x=0.5cm,y=0.5cm]

\draw [line width=0.5pt] (0,0)-- ++(8,0) --++ (1,1)-- ++(0,6)-- ++(-8,0) --++ (-1,-1)-- ++(0,-6);

\draw [line width=1pt] (0,2.5)-- (3,2.5)-- ++(1,2)-- (9,4.5);
\draw [line width=2pt] (3,0)-- (3,2.5);
\draw [line width=2pt] (4,7)-- (4,4.5);
\draw [line width=1pt] (0,3.5)-- (9,3.5);
\draw (3,1.5) node[right] {$2$};
\draw (4,6.25) node[right] {$2$};
\end{tikzpicture} \\
$\delta_\Gamma$ & $2$ & $1$ & $1$ \\
$m_\Gamma$ & $16$ & $4$ & $4$ \\
$m^q_\Gamma$ & $\substack{q^3+2q^2+3q+4\\+3q^{-1}+2q^{-2}+q^{-3}}$ & $q+2+q^{-1}$ & $q+2+q^{-1}$ \\
$\Lambda_\Gamma^\Sigma$ & $1$ & $2$ & $2$ \\
\end{tabular}
\begin{tabular}{ccc}
 & \begin{tikzpicture}[line cap=round,line join=round,>=triangle 45,x=0.5cm,y=0.5cm]

\draw [line width=0.5pt] (0,0)-- ++(8,0) --++ (1,1)-- ++(0,6)-- ++(-8,0) --++ (-1,-1)-- ++(0,-6);

\draw [line width=1pt] (0,2.5)-- (2.5,2.5)-- ++(2,2)-- (9,4.5);
\draw [line width=1pt] (2.5,0)-- (2.5,2.5);
\draw [line width=1pt] (4.5,7)-- (4.5,4.5);
\draw [line width=1pt] (0,3.5)-- (9,3.5);
\draw [line width=1pt] (3.5,0)-- (3.5,7);
\end{tikzpicture} &
\begin{tikzpicture}[line cap=round,line join=round,>=triangle 45,x=0.5cm,y=0.5cm]

\draw [line width=0.5pt] (0,0)-- ++(8,0) --++ (1,1)-- ++(0,6)-- ++(-8,0) --++ (-1,-1)-- ++(0,-6);

\draw [line width=1pt] (0,1)-- (3,4)-- (0.5,6.5);
\draw [line width=1pt] (7,0)--++ (2,2);
\draw [line width=1pt] (8.5,0.5)-- (5,4) --++(3,3);
\draw [line width=2pt] (3,4)-- (5,4);
\draw (4,4) node[below] {$2$};
\end{tikzpicture} \\
$\delta_\Gamma$ & $1$ & $1$ \\
$m_\Gamma$ & $1$ & $4$ \\
$m^q_\Gamma$ & $1$ & $q+2+q^{-1}$ \\
$\Lambda_\Gamma^\Sigma$ & $4$ & $2$ \\
\end{tabular}

\caption{\label{figure genus 2 deg 2 2}Genus $2$ curves and their multiplicities. For each curve, there are $4$ curves up to translation in a fixed linear system.}
\end{center}
\end{figure}
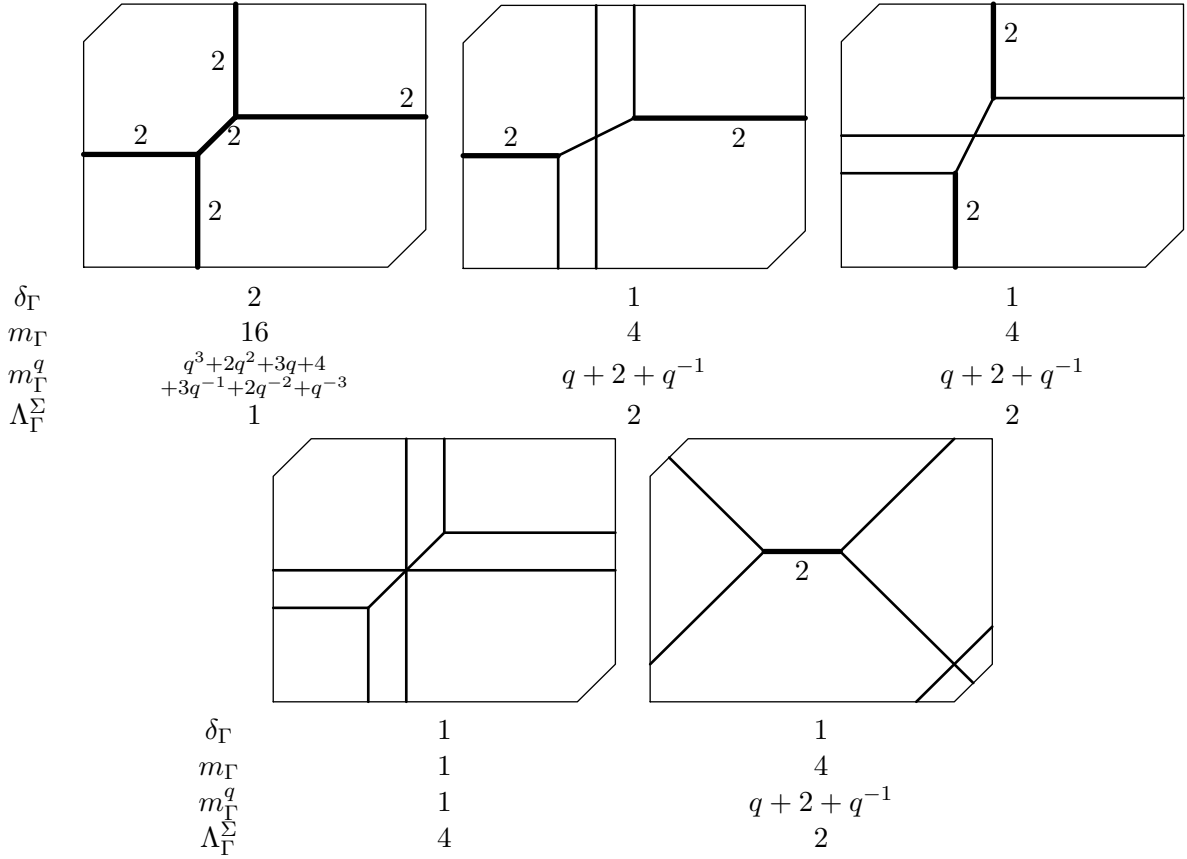

\begin{expl}
We compute the invariants for genus $2$ curves in the class $\begin{pmatrix}
2 & 0 \\
0 & 2 \\
\end{pmatrix}$. There are no marked point in this case. The curves are represented on Figure \ref{figure genus 2 deg 2 2}. Each of them has four translates. Thus, we recover
$$N_{2,(2,2)}=4(32+8+8+4+8)=240,$$
and
$$BG_{2,(2,2)}=q^3+2q^2+9q+20+9q^{-1}+2q^{-2}+q^{-3}.$$
\end{expl}

\bibliographystyle{plain}
\bibliography{biblio}

\end{document}